\date{}
\theoremstyle{break}
\newtheorem{Theo}{Theorem}[section]
\newtheorem{Def}[Theo]{Definition}
\newtheorem{Ex}[Theo]{Example}
\newtheorem{Prop}[Theo]{Proposition}  
\theoremstyle{plain}
\newtheorem{rk}[Theo]{Remark}
\newtheorem{lemme}[Theo]{Lemma}            
\theoremstyle{empty}
\newtheorem{cond}{Condition}
\theoremstyle{break}
\newtheorem[M,thickness=2pt]{Theoperso}[Theo]{Theorem}
\newtheorem[M,thickness=2pt]{theoalt}{Theorem}[Theo]
\newtheorem[M,thickness=2pt]{Propperso}[Theo]{Proposition}  
\newtheorem[S,thickness=1pt]{lemmeperso}[Theo]{Lemma}  
\newtheorem[M,thickness=2pt]{Theof}[Theo]{Théorème}
\newtheorem[M,thickness=2pt]{corperso}[Theo]{Corollary}
\newenvironment{theop}[1]{
  
  \theoalt
}{\endtheoalt}
\newcommand{\N}{\mathbb{N}}
\newcommand{\R}{\mathbb{R}}
\newcommand{\C}{\mathbb{C}}
\renewcommand{\O}{\mathcal{O}}
\newcommand{\Q}{\mathbb{Q}}
\newcommand{\G}{\mathbb{G}}
\newcommand{\B}{\mathbb{B}}
\renewcommand{\P}{\mathbb{P}}
\newcommand{\dr}{\partial}
\newcommand{\mc}{\mathcal}
\newcommand{\clog}[2]{\Omega_{#1}(\log #2)}
\DeclareMathOperator{\grass}{Grass}
\DeclareMathOperator{\rg}{rk}
\DeclareMathOperator{\supp}{Supp}
\DeclareMathOperator{\Res}{Res}
\DeclareMathOperator{\Aut}{Aut}
\DeclareMathOperator{\bs}{Bs}
\DeclareMathOperator{\mult}{mult}
\newcommand{\un}{\mathds{1}}
\title{Hyperbolicity of smooth logarithmic and orbifold pairs in $\P^n$}
\author[,1,2]{Clara Dérand \thanks{clara.derand@univ-lorraine.fr \\claraderand@orange.fr \\ ORCID iD 0009-0004-5163-8837}}
\affil[1]{Université de Lorraine, CNRS, IECL, Vand\oe uvre-lès-Nancy, France}
\affil[2]{Université de Lorraine, CNRS, CRAN, Vand\oe uvre-lès-Nancy, France}
\begin{document}
\maketitle
\begin{abstract}
    We derive a necessary and sufficient condition on a hyperplane arrangement in $\P^n$ for the associated logarithmic cotangent bundle to be ample modulo boundary. We extend this result to the orbifold setting and give some applications concerning hyperbolicity of pairs. We improve significantly the results of \cite{DR20}.
\end{abstract}
\section{Introduction}
\subsection{Hyperbolicity of quasi-projective varieties}
It is now well-known that hyperbolicity properties of a (quasi-)projective variety can be 
investigated via the positivity of its (logarithmic) cotangent bundle. This is pictured by the Fundamental vanishing lemma, which under the following form first appeared in a paper by Noguchi \cite{Nog77}.
\begin{Theo}[Fundamental vanishing lemma]
\label{vanishing}
    Let $X$ be a smooth projective variety and $D\subset X$ a simple normal crossings divisor. Let $f:\C\to X\setminus D$ be a non-constant holomorphic map and denote by $\tilde{f}:\C\to\P(\clog{X}{D})$ its canonical lift. Then for any ample line bundle $A$ on $\P(\clog{X}{D})$ and any section $\widetilde\omega\in H^0(\P(\clog{X}{D}),\O_{\P(\clog{X}{D})}(m)\otimes A^{-1})$, one has \[\tilde f^*\widetilde \omega\equiv0.\]

    In other words, for any entire curve $f:\C\to X\setminus D$, one has \[\tilde f(\C)\subset \B_+(\O_{\P(\clog{X}{D})}(1)).\]
\end{Theo}
For this reason, projective varieties with ample cotangent bundle has received much attention: indeed, in this case, the theorem above with $D=\emptyset$ immediately shows that $X$ is hyperbolic.

Let $(X,D)$ be a smooth log pair. It has been shown by several authors that hyperbolicity properties of $X\setminus D$ could be investigated through the positivity of the associated logarithmic cotangent bundle $\clog{X}{D}$, see e.g. \cite{No86}, \cite{BD18}, \cite{BD19}, \cite{DR20},\cite{CDR20},\cite{CDDR21}.

In the non-compact case, though, it appears that the logarithmic cotangent bundle is never ample. Actually, one finds trivial quotients supported on the different components of $D$. Each such quotient gives a positive dimensional subvariety of the augmented base locus $\B_+(\O_{\P(\clog{X}{D}}(1))$. 

More recently, many works also focused on the logarithmic setting, and there are now several examples for which the logarithmic cotangent bundle is big, see \emph{e.g.} \cite{No86},\cite{Rou07},\cite{Rou09}, \cite{BD18}, \cite{BD19}, \cite{DR20},\cite{CDR20},\cite{CDDR21}. 

Nevertheless, if one asks for a stronger positivity property, there are up to now very few results, essentially by \cite{Noc83}, \cite{BD18}, \cite{DR20} and \cite{ADT22}.
A natural question to ask is indeed whether the trivial quotients mentioned above are the only obstructions to ampleness. 
In \cite{BD18}, the notion of \emph{almost ampleness } was introduced to express this minimality of the logarithmic cotangent bundle.
\begin{Def}
Let $(X,D)$ be a smooth log pair, with $D=\sum_{i=1}^c D_i$. 
The logarithmic cotangent bundle $\clog{X}{D}$ is said almost ample if it satisfies
\[\B_+(\O_{\P(\clog{X}{D}}(1))=\bigcup_{\substack{I\subset\{1,\cdots,c\} \\ |I|<n}}\P\left(\O_{D_I}^{\oplus |I|}\right).\]
\end{Def}
A weaker property is used instead in the papers \cite{DR20} and \cite{ADT22}.
\begin{Def}
The logarithmic cotangent bundle $\clog{X}{D}$ is \emph{ample modulo} $D$ if it satisfies
\[p(\B_+(\O_{\P(\clog{X}{D}}(1)))=D,\]where $p:\P(\clog{X}{D})\to X$ is the canonical projection.
\end{Def}
Note that both properties imply Brody-hyperbolicity of the complement $X\setminus D$.

The very first result in this direction is due to Noguchi, in the case of general arrangements of lines in $\P^2$. It can be formulated with our vocabulary as follows.
\begin{Prop}[\cite{No86}]
The logarithmic tangent bundle along an arrangement $D$ of $c\geq6$ lines in general position with respect to hyperplanes and quadrics is ample modulo $D$.
\end{Prop}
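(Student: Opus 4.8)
The plan is to work with the logarithmic cotangent bundle $E:=\clog{\P^2}{D}$, to build an explicit model of it, and to control the augmented base locus of $L:=\O_{\P(E)}(1)$ through that model; here $p\colon\P(E)\to\P^2$ denotes the projection, $D=\sum_{i=1}^{c}D_i$ and $D_i=\{\ell_i=0\}$ for linear forms $\ell_i$ on $\C^3$. Since no three of the $D_i$ are concurrent, the $\ell_i$ span $(\C^3)^\vee$, and $\iota\colon\P^2\hookrightarrow\P^{c-1}$, $x\mapsto[\ell_1(x):\dots:\ell_c(x)]$, is a linear embedding with $\iota^{-1}(\{z_i=0\})=D_i$ which meets the coordinate hyperplane arrangement transversally along every stratum. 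The logarithmic cotangent bundle of $\P^{c-1}$ along its $c$ coordinate hyperplanes is trivial of rank $c-1$, so restricting logarithmic forms along $\iota$ produces a surjection $\O_{\P^2}^{\oplus(c-1)}\twoheadrightarrow E$. Hence $E$ is globally generated, $H^0(E)=\{(a_i)_i\in\C^c:\sum_i a_i=0\}$ via $(a_i)_i\mapsto\sum_i a_i\tfrac{d\ell_i}{\ell_i}$, the bundle $L$ is base-point free, and the associated morphism $\Phi\colon\P(E)\to\P^{c-2}=\P(H^0(E)^\vee)$ satisfies $L=\Phi^{*}\O_{\P^{c-2}}(1)$. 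Over $\P^2\setminus D$, $\Phi$ is the projectivised logarithmic Gauss map: it sends $(x,[v])$ to the class of $\bigl(\tfrac{\ell_i(v)}{\ell_i(x)}\bigr)_{i=1}^{c}$ modulo the all-ones vector.

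Next I would reduce the statement to curves in the fibres of $\Phi$. A short Chern class computation (from the residue sequence) gives $L^3=\int_{\P^2}\bigl(c_1(E)^2-c_2(E)\bigr)=\tfrac12(c-3)(c-4)>0$, so $L$ is nef and big, and Nakamaye's theorem identifies $\B_+(L)$ with the null locus of $L$, the union of the positive-dimensional subvarieties $V\subset\P(E)$ with $L^{\dim V}\cdot V=0$. For each $i$ the residue surjection $E|_{D_i}\twoheadrightarrow\O_{D_i}$ yields a section of $p$ over $D_i$ along which $L$ is trivial, so $p(\B_+(L))\supseteq D$; it remains to prove $\B_+(L)\cap p^{-1}(\P^2\setminus D)=\varnothing$. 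Every positive-dimensional subvariety of the null locus is covered by irreducible curves $C$ with $L\cdot C=0$ (for a surface component, $L$ restricts to a nef class of numerical dimension $\le1$), so it suffices to exclude an irreducible curve $C\subset\P(E)$ with $L\cdot C=0$ and $C\not\subset p^{-1}(D)$. For such a $C$, since $L=\Phi^{*}\O(1)$ we get $\Phi(C)=\{q\}$ a point, so $C\subset Z_q:=\Phi^{-1}(q)$; moreover $p(C)$ is not a point (a fibre $\P(E_x)\cong\P^1$ has $L|_{\P(E_x)}=\O_{\P^1}(1)$), so $\Gamma:=p(C)$ is a curve and, as $C\not\subset p^{-1}(D)$, $\Gamma\not\subset D$.

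Then I would compute $Z_q$. Fix $q=[(q_i)_i]$ and a basis $\{\mathcal L_j\}_{j=1}^{c-3}$ of the space of linear relations among $\ell_1,\dots,\ell_c$. Unwinding the formula for $\Phi$, a point $(x,[v])$ with $x\notin D$ lies in $Z_q$ precisely when $\ell_i(v)=(\lambda q_i+c_0)\ell_i(x)$ for all $i$ and some $\lambda\ne0$, $c_0$; via $\iota$ this forces $x$ into the linear subspace $\Lambda_q:=\{A_1=\dots=A_{c-3}=0\}\subseteq\P^2$, where $A_j:=\sum_k(\mathcal L_j)_k q_k\ell_k\in(\C^3)^\vee$, and over $\Lambda_q\setminus D$ the direction $[v]$ is uniquely determined (the ambiguity in $\lambda,c_0$ disappears projectively). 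Thus $Z_q\cap p^{-1}(\P^2\setminus D)$ is a section of $p$ over $\Lambda_q\setminus D$, whence $\Gamma\subseteq\Lambda_q$ and $\dim\Lambda_q\ge1$. Writing $q\mapsto M_q:=\sum_k q_k\bigl(\ell_k\otimes\mathcal L(e_k)\bigr)\in\mathrm{Hom}(\C^3,\C^{c-3})$, a linear map with $\Lambda_q=\P(\ker M_q)$, we have $\dim\Lambda_q=2-\rg M_q$. The points $q=[e_i]$ are harmless, since $M_{e_i}=\ell_i\otimes\mathcal L(e_i)$ gives $\Lambda_{e_i}=D_i$ and then $C$ would lie in $p^{-1}(D_i)$, contradicting $C\not\subset p^{-1}(D)$. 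So the whole proof comes down to the assertion: \emph{if $\rg M_q\le1$ then $M_q=w\otimes\ell_i$ for some $i,w$, so $\Lambda_q=D_i$; and $M_q\ne0$ for $q\ne0$} — equivalently, the linear subspace $\P(\{M_q\})\subset\P\bigl(\mathrm{Hom}(\C^3,\C^{c-3})\bigr)$ meets the rank-$\le1$ locus only in the points $[M_{e_i}]$ and avoids $0$.

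The hard part will be this last, purely arrangement-theoretic statement, which is exactly where the hypotheses are used. General position with respect to hyperplanes (any three $\ell_i$ independent, i.e.\ no three of the dual points $[\ell_i]\in(\P^2)^*$ collinear) governs how $\P(\{M_q\})$ meets the rank-$\le1$ locus and, together with a dimension count and the $c$ forced points $[M_{e_i}]$, already settles $c\ge7$. For the borderline value $c=6$ the rank-$\le1$ locus of $\P^8$ and the $4$-plane $\P(\{M_q\})$ have complementary dimensions, so the intersection could acquire extra points; the content is that this occurs precisely when six of the dual points $[\ell_i]$ lie on a conic, which is forbidden by general position with respect to quadrics. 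Making this determinantal analysis precise — translating $\rg M_q\le1$ into collinearity/conic conditions on the arrangement and ruling out non-coordinate kernels — is the technical crux; once it is done one has $\B_+(L)\subset p^{-1}(D)$, hence $p(\B_+(L))=D$, i.e.\ $\clog{\P^2}{D}$ is ample modulo $D$.
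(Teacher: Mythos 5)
Your setup reproduces, in slightly different clothing, what the paper does in Sections 3.3--3.4: global generation of $\clog{\P^2}{D}$, the morphism $\Phi$ to $\P^{c-2}$, Nakamaye/Lemma \ref{fibres1} identifying $\B_+(\O_{\P(E)}(1))$ with the union of positive-dimensional fibres, and the computation showing that $p(\Phi^{-1}(q))$ is a linear subspace of $\P^2$. All of that is correct (including $L^3=\tfrac12(c-3)(c-4)$). But the proof stops exactly where the proposition begins. The whole content of the statement is the equivalence between ``there exists a positive-dimensional $\Lambda_q$ not contained in $D$'' and ``the dual points $[\ell_i]\in(\P^2)^*$ lie on a common conic,'' together with the count that $c\ge 6$ points in general position with respect to quadrics impose independent conditions on the $5$-dimensional system of conics and hence lie on no common conic. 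You state this equivalence as ``the technical crux'' and explicitly defer it (``once it is done\dots''). In the paper this is precisely the chain Lemma \ref{trivial_quotient} $\Rightarrow$ Proposition \ref{construction} (superjumping line $\Leftrightarrow$ degree-one map $\psi:l\to H_0$ preserving each $H_i$) $\Rightarrow$ Proposition \ref{quadric} (such a $\psi$ $\Leftrightarrow$ a quadric scroll through $W=l^*$ and the points $H_i^*$), plus the dimension count $4n-3=5$ for rank-$\le4$ quadrics. Your determinantal reformulation via $\rg M_q\le 1$ is a plausible alternative to that geometric construction, but as written it is an announcement of a proof, not a proof.

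A second, substantive error: you claim that general position with respect to hyperplanes ``already settles $c\ge7$'' by a dimension count, and that the quadric hypothesis is only needed at the borderline $c=6$. This is false. Expected-dimension arguments say nothing here because the linear system $\P(\{M_q\})$ attached to a \emph{given} arrangement is not generic: take $c\ge7$ lines all tangent to a fixed smooth conic (dually, $c$ points on a smooth conic) with no three concurrent. This arrangement is in general position with respect to hyperplanes, yet the dual points impose only $5$ conditions on conics, so by the paper's Corollary \ref{main_thm} (with $4n-2=6$) the logarithmic cotangent bundle is \emph{not} ample modulo $D$, and your $\P(\{M_q\})$ must meet the rank-$\le1$ locus in extra points. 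The hypothesis of general position with respect to quadrics is needed for every $c\ge6$, and any completed version of your determinantal analysis has to invoke it uniformly, not only at $c=6$.
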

This has recently been generalized to the higher-dimensional case in \cite{DR20}.
\begin{Theo}[Theorem A, \cite{DR20}]
\label{thm:darondeau}
    The logarithmic cotangent bundle along an arrangement $\mathscr{A}$ of $c\geq\binom{n+2}{2}$ hyperplanes in $\P^n$ in general position with respect to hyperplanes and to quadrics is ample modulo $\mathscr{A}$.
\end{Theo}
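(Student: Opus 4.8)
The plan is to present $\clog{\P^n}{\mathscr{A}}$ as an explicit globally generated quotient of a trivial bundle, to reinterpret \emph{ample modulo} $\mathscr{A}$ as injectivity of the associated morphism away from $\mathscr{A}$, and to establish that injectivity by a separation argument in which the quadratic Veronese of $\P^n$ is responsible for the threshold $c\ge\binom{n+2}{2}$. Write $\mathscr{A}=\sum_{i=1}^c H_i$ with $H_i=\{\ell_i=0\}$, $\ell_i\in(\C^{n+1})^{*}$. First I would record that general position with respect to hyperplanes forces $\mathscr{A}$ to be a simple normal crossing divisor (at most $n$ components meet at any point, and transversally), so $(\P^n,\mathscr{A})$ is a smooth log pair, and it makes the $\ell_i$ define a linear embedding $j\colon\P^n\hookrightarrow\P^{c-1}$, $x\mapsto[\ell_1(x):\dots:\ell_c(x)]$, transverse to the coordinate hyperplane arrangement $\mathscr{H}$ and with $j^{-1}\mathscr{H}=\mathscr{A}$. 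Since $\clog{\P^{c-1}}{\mathscr{H}}\cong\O_{\P^{c-1}}^{\oplus(c-1)}$ (trivialized by $d\log(z_i/z_1)$, $i=2,\dots,c$), transversality yields
$$0\longrightarrow N^{*}_{\P^n/\P^{c-1}}\longrightarrow\O_{\P^n}^{\oplus(c-1)}\longrightarrow\clog{\P^n}{\mathscr{A}}\longrightarrow 0,$$
where the surjection is restriction of logarithmic $1$-forms, sending $e_i$ to $\omega_i:=d\log(\ell_i/\ell_1)$. Hence $\clog{\P^n}{\mathscr{A}}$ is generated by the $(c-1)$-dimensional space $W:=\langle\omega_2,\dots,\omega_c\rangle$ of closed logarithmic forms, $\O(1):=\O_{\P(\clog{\P^n}{\mathscr{A}})}(1)$ is globally generated, and $W$ defines a morphism $\Psi\colon\P(\clog{\P^n}{\mathscr{A}})\to\P^{c-2}$ with $\O(1)=\Psi^{*}\O_{\P^{c-2}}(1)$.

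Next I would reduce the theorem to the behaviour of $\Psi$ over $U:=\P^n\setminus\mathscr{A}$. The inclusion $\mathscr{A}\subseteq p(\B_+(\O(1)))$ is the general mechanism recalled in the introduction: each residue quotient $\clog{\P^n}{\mathscr{A}}\twoheadrightarrow\O_{H_i}$ embeds $H_i$ in $\P(\clog{\P^n}{\mathscr{A}})$ along a locus where $\O(1)$ restricts trivially, hence is not big. For the reverse inclusion, since $\O(1)=\Psi^{*}\O_{\P^{c-2}}(1)$ is nef we have $\B_+(\O(1))=\exc(\Psi)$, the union of the positive-dimensional fibers of $\Psi$. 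It therefore suffices to prove that $\Psi$ is injective on $p^{-1}(U)$: an irreducible positive-dimensional fiber $F$ meeting $p^{-1}(U)$ would meet this open set in a dense positive-dimensional subset, contradicting injectivity, so every positive-dimensional fiber lies in $p^{-1}(\mathscr{A})$ and $p(\B_+(\O(1)))\subseteq\mathscr{A}$.

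It remains to prove that $\Psi$ is injective on $p^{-1}(U)$, which is the core of the argument. Over $U$ one has $\clog{\P^n}{\mathscr{A}}=\Omega_{\P^n}$, a point of $p^{-1}(U)$ is a pair $(x,[\varphi])$ with $\varphi\in T_x\P^n$, and $\Psi(x,[\varphi])$ is the class of the functional $\omega\mapsto\omega_x(\varphi)$ on $W$; writing $\omega=\sum_i b_i\,d\ell_i/\ell_i$ with $\sum_i b_i=0$, this functional is $(b_i)\mapsto\sum_i b_i\,\ell_i(\varphi)/\ell_i(x)$. Hence for $x,x'\in U$ one has $\Psi(x,[\varphi])=\Psi(x',[\varphi'])$ if and only if there are $\lambda\neq0$, $\mu$ with
$$\ell_i(\varphi')\,\ell_i(x)-\lambda\,\ell_i(\varphi)\,\ell_i(x')-\mu\,\ell_i(x)\,\ell_i(x')=0\qquad(1\le i\le c).$$
Now comes the quadric input: since $c\ge\binom{n+2}{2}=\dim\mathrm{Sym}^2(\C^{n+1})^{*}$ and the arrangement is in general position with respect to quadrics, the squares $\ell_1^2,\dots,\ell_c^2$ span all quadrics, i.e.\ the evaluation $\mathrm{Sym}^2\C^{n+1}\to\C^c$, $\sigma\mapsto(\langle\ell_i^2,\sigma\rangle)_i$, is injective. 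As $\ell_i(v)\ell_i(w)=\langle\ell_i^2,v\cdot w\rangle$, the $c$ scalar equations collapse to the single identity of symmetric tensors $\varphi'\cdot x=\lambda\,\varphi\cdot x'+\mu\,x\cdot x'=(\lambda\varphi+\mu x)\cdot x'$ in $\mathrm{Sym}^2\C^{n+1}$. Both sides are nonzero and, since $\lambda\neq0$ rules out $\varphi\in\C x$, a short case analysis of symmetric tensors of rank at most $2$ gives $[x']=[x]$ and then $[\varphi']=[\lambda\varphi+\mu x]=[\varphi]$ in $\P(T_x\P^n)$, i.e.\ $(x,[\varphi])=(x',[\varphi'])$. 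This proves injectivity on $p^{-1}(U)$ and finishes the proof.

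I expect the last paragraph to be the main obstacle. One must fix a working definition of \emph{general position with respect to quadrics} and verify that it really yields the spanning of $\mathrm{Sym}^2(\C^{n+1})^{*}$ by the $\ell_i^2$ (this is exactly what pins the bound $c\ge\binom{n+2}{2}$, and recovers Noguchi's $c\ge6$ for $\P^2$), and the ensuing rank $\le2$ analysis of symmetric tensors has to be run carefully through its degenerate cases ($\varphi'$ proportional to $x$, rank-one tensors, $x'=x$). The transversality and normal-crossing inputs of the first step, as well as the identity $\B_+(\O(1))=\exc(\Psi)$, are routine once general position with respect to hyperplanes is made precise.
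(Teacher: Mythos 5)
Your argument is correct, and the two points you flag do work out: with the standard reading of \emph{general position with respect to quadrics} (the dual points $H_i^\ast\in(\P^n)^\ast$ impose independent conditions on quadrics, so for $c\geq\binom{n+2}{2}$ the squares $\ell_i^2$ span $\mathrm{Sym}^2(\C^{n+1})^{\ast}$), and using that $\varphi\notin\C x$ and $\varphi'\notin\C x'$, the tensor identity $\varphi'\cdot x=(\lambda\varphi+\mu x)\cdot x'$ forces $[x]=[x']$ and then $[\varphi']=[\varphi]$ by uniqueness of factorisation of a rank-two symmetric tensor (the alternative matching $[\varphi']=[x']$ is excluded, and the rank-one degeneration forces $x=x'$ and then a contradiction). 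However, your route is genuinely different from the one developed in this paper (which does not reprove the quoted theorem of \cite{DR20} but supersedes it). The paper never separates points of $p^{-1}(U)$ directly: it shows via Proposition \ref{fibres2} and the explicit coordinate expression of $\Phi$ that positive-dimensional fibres project to \emph{linear} subspaces of $\P^n$, reduces ampleness modulo $D$ to the absence of superjumping lines off $\supp D$ (Lemma \ref{trivial_quotient}), characterises those by degree-one maps $l\to H_0$ preserving each component (Proposition \ref{construction}), and finally translates this into the dual points lying on a quadric \emph{of rank at most four} (Proposition \ref{quadric}). That extra geometric step is what buys the sharp criterion of Corollary \ref{main_thm} ($4n-2$ independent conditions on quadrics instead of $\binom{n+2}{2}$) together with the description of the base locus as a union of tangent quadric surfaces. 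Your separation argument is shorter and purely linear-algebraic, and it actually contains the germ of that improvement: the obstruction tensor $\varphi'\cdot x-(\lambda\varphi+\mu x)\cdot x'$ is a difference of two tensors of rank at most $2$, hence has rank at most $4$, so injectivity of $\Psi$ on $p^{-1}(U)$ already follows from the weaker hypothesis that the $H_i^\ast$ lie on no quadric of rank $\leq4$ (Condition $\color{red}\bigstar$); what your method does not give by itself is the converse, namely the construction of genuine positive-dimensional fibres when such a quadric exists, which is where the paper's scroll construction is indispensable.
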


On the other hand, it is known that the logarithmic cotangent bundle associated to a smooth pair $(X,D)$ is almost ample when the degree of the irreducible components of $D$ is sufficiently large.
\begin{Theo}[Brotbek-Deng '18 \cite{BD18}]
Let $X$ be a smooth projective variety of dimension $n$ and let $c\geq n$. Let $L$ be a very ample line bundle on $X$. For any $m\geq (4n)^{n+2}$ and for general hypersurfaces $H_1,\cdots,H_c\in|L^m|$,  the logarithmic cotangent bundle $\clog{X}{D}$ associated to the smooth pair $(X,D=\sum_{i=1}^cH_i)$ is almost ample.
\end{Theo}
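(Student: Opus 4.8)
The plan is to reduce almost ampleness to the existence of enough twisted logarithmic symmetric differential forms with small base locus, and then to build these forms explicitly by a logarithmic Wronskian construction on one carefully chosen member of the family.

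Write $\mc{X}=\P(\clog{X}{D})$ with tautological bundle $\O_{\mc{X}}(1)$ and projection $p:\mc{X}\to X$. The residue maps $\clog{X}{D}\twoheadrightarrow\O_{D_i}$, and more generally $\clog{X}{D}\twoheadrightarrow\O_{D_I}^{\oplus|I|}$ over $D_I=\bigcap_{i\in I}D_i$, force $\bigcup_{|I|<n}\P(\O_{D_I}^{\oplus|I|})\subseteq\B_+(\O_{\mc{X}}(1))$ with no hypothesis on the degree, so the whole content is the reverse inclusion. Since $\O_{\mc{X}}(1)$ is $p$-ample, $\O_{\mc{X}}(a)\otimes p^*L$ is ample on $\mc{X}$ for $a\gg 0$, hence $\B_+(\O_{\mc{X}}(1))$ is the base locus of $|k\O_{\mc{X}}(1)-\O_{\mc{X}}(a)-p^*L|$ for $k$ large, that is, the locus in $\mc{X}$ cut out by $H^0\bigl(X,S^{k-a}\clog{X}{D}\otimes L^{-1}\bigr)$. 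Thus it suffices to produce one integer $N$ and one admissible tuple $(H_1,\dots,H_c)$ for which the global logarithmic symmetric differentials in $H^0\bigl(X,S^{N}\clog{X}{D}\otimes L^{-1}\bigr)$ have common zero locus in $\mc{X}$ contained in $\bigcup_{|I|<n}\P(\O_{D_I}^{\oplus|I|})$; a standard cohomology-and-base-change and semicontinuity argument over the parameter space $\prod|L^m|$ then upgrades this to the general tuple, the bad locus being closed under specialization.

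For the construction I would use the logarithmic jet and Wronskian formalism. If $s_i\in H^0(X,L^m)$ defines $H_i$, then $d\log s_i$ is a logarithmic $1$-form along $D$, and higher-order Wronskians $W(s_{i_0},\dots,s_{i_q})$ formed with respect to the canonical jet structure of $L^m$, after division by the appropriate monomial in the $s_j$'s, yield global sections of $S^{N}\clog{X}{D}\otimes L^{m'}$ with $N$ of order $q^2$ and $m'$ positive: the differentiation consumes only order $\sim q^2$ of the positivity of $L^m$, so for $q$ of size comparable to $n$ (enough to generate forms that span the rank-$n$ bundle and separate tangent directions) there is positivity left to twist down by $L^{-1}$, and a careful bookkeeping of these exponents is precisely what forces $m\geq(4n)^{n+2}$. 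To make the Wronskians computable I would take the $s_i$ of a special, combinatorially tractable shape, for instance deformed Fermat-type sections $s_i=\sum_j a_{ij}^{\delta}\,w_j^{m/\delta}$ relative to a generic subsystem $w_0,\dots,w_n$ of sections of a fixed lower power of $L$, with generic coefficients $a_{ij}$.

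The main obstacle is the last step: showing that for generic coefficients the common zero locus of all these Wronskian forms in $\mc{X}$ is exactly $\bigcup_{|I|<n}\P(\O_{D_I}^{\oplus|I|})$ and nothing more. I would stratify $\mc{X}$ by the number of the $s_i$ vanishing at the base point $x$, and for each stratum study the incidence variety of triples $\bigl((a_{ij}),x,[\xi]\bigr)$ with $[\xi]\in\P(\clog{X}{D}_x)$ lying in the common zero of every Wronskian; a dimension count, via generic smoothness on the universal object, should show this incidence variety maps onto a proper (too small) subset of the coefficient space unless $([\xi],x)$ lies over some $D_I$ with $|I|<n$ — the mechanism being that outside the bad locus the forms $d\log s_i$ together with their Wronskian combinations span $\clog{X}{D}_x$, which requires at least $n$ of the hypersurfaces to be available near $x$ and is exactly why the hypothesis $c\geq n$ is imposed. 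Assembling the strata then yields $\B_+(\O_{\mc{X}}(1))=\bigcup_{|I|<n}\P(\O_{D_I}^{\oplus|I|})$, i.e. almost ampleness of $\clog{X}{D}$.
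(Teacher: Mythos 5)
The paper never proves this statement: it is quoted from \cite{BD18} as background, so there is no internal proof to compare against. Your outline is a faithful reconstruction of the Brotbek--Deng strategy --- reduce almost ampleness to exhibiting sections of $S^{N}\clog{X}{D}\otimes L^{-1}$ whose common zero locus in $\P(\clog{X}{D})$ is the expected union of the $\tilde{D_I}$, build such sections as logarithmic Wronskians of Fermat-type members of $|L^m|$, and conclude by a dimension count on a universal incidence variety over the coefficient space --- and the surrounding framing (the residue quotients give the unavoidable part of $\B_+$; semicontinuity passes from one good tuple to the general one) is sound.

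The gap is that the two steps carrying all the weight are asserted rather than carried out, and the first fails in the naive form you state it. If $s_i\in H^0(X,L^m)$, then $W(s_{i_0},\dots,s_{i_q})$ divided by the product $s_{i_0}\cdots s_{i_q}$ is a section of $S^{q(q+1)/2}\clog{X}{D}$ with twist exactly \emph{zero}: no negative power of $L$ appears, and untwisted symmetric differentials say nothing about $\B_+$. The entire point of the deformed Fermat shape in \cite{BD18} (each $s_i$ a generic combination of monomials of the form $\tau^{m-\delta|J|}w^{\delta J}$ in auxiliary sections of low powers of $L$) is that every entry of the Wronskian matrix stays divisible by large powers of the $\tau$'s, so that after factoring these out one genuinely lands in $S^{N}\clog{X}{D}\otimes L^{-1}$; arranging this divisibility, controlling where the resulting forms degenerate, and keeping the exponents consistent is precisely what produces the bound $(4n)^{n+2}$, and your ``careful bookkeeping'' clause stands in for all of it. Likewise, the closing ``dimension count \dots should show'' is the main technical lemma of \cite{BD18}: one must check that at every point of $\P(\clog{X}{D})$ outside $\bigcup_{|I|<n}\tilde{D_I}$ enough Wronskians are simultaneously available and nondegenerate, a stratified estimate that is the hardest part of their paper. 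As written, your proposal is a correct map of the proof, not a proof.
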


More generally, we can ask similar questions in the orbifold case, which interpolates between the compact and the logarithmic cases. Following Campana (see \cite{Cam04}), a smooth orbifold is a pair $(X,\Delta)$ where $X$ is a smooth complex projective variety and $\Delta$ is a $\mathbb{Q}$-divisor with normal crossings support and coefficients in $\Q\cap [0,1]$. One can define a sheaf $\Omega^1(X,\Delta)$ of differential forms with `fractional' poles: these are actually holomorphic forms living on a suitable ramified covering of $X$.

In \cite{DR20}, the authors also derived an orbifold analogue of their main theorem \ref{thm:darondeau}.
\begin{Theo}[Theorem B in \cite{DR20}]
The orbifold cotangent bundle along an arrangement $D$ of $c\geq\binom{n+2}{2}$ hyperplanes in $\P^n$ in general position with respect to hyperplanes and quadrics, with multiplicities $\geq 2n+2$, is ample modulo boundary.
\end{Theo}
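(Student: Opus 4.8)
The plan is to reduce, by the curve-by-curve criterion that is standard for arrangements (see \cite{No86} and \cite{DR20}), to a positivity estimate along curves, and to show that the large multiplicities $m_i\ge 2n+2$ make up for the comparatively small number $c\ge\binom{n+2}{2}$ of hyperplanes. Write $D=\sum_iH_i$, fix a linear form $\ell_i$ cutting out $H_i$, and put $\Delta=\sum_i(1-\tfrac1{m_i})H_i$. First I would check that, for $c$ this large, the orbifold symmetric differentials of $(\P^n,\Delta)$ are globally generated: generators of $\mathrm{Sym}^N\Omega^1(\P^n,\Delta)$ are produced from the logarithmic generators $d\log(\ell_i/\ell_j)$ of $\clog{\P^n}{D}$ by taking products and multiplying by exactly the power of each $\ell_i$ needed to bring the pole order along $H_i$ down to the orbifold bound $\lfloor N/m_i\rfloor$. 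Thus the tautological class $\xi$ on $\P(\Omega^1(\P^n,\Delta))$ is nef, and by the Nakamaye description of its augmented base locus, together with the usual reduction (cf.\ \cite{DR20}), it is enough to show: for the normalization $f\colon\widehat C\to\P^n$ of every irreducible curve $C\subset\P^n$ with $C\not\subseteq D$, the rank-$n$ bundle $f^*\Omega^1(\P^n,\Delta)$ carries no quotient line bundle of degree $\le 0$.

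The starting point of the estimate is that $\det\Omega^1(\P^n,\Delta)$ is the $\Q$-line bundle $K_{\P^n}+\Delta=\O\bigl(c-n-1-\sum_i\tfrac1{m_i}\bigr)$, so that $\deg f^*\Omega^1(\P^n,\Delta)=d\bigl(c-n-1-\sum_i\tfrac1{m_i}\bigr)$ with $d:=\deg C$ is comfortably positive for $c\ge\binom{n+2}{2}$ and $m_i\ge 2n+2$; hence as long as $f^*\Omega^1(\P^n,\Delta)$ is not too unstable, every quotient line bundle has positive degree. The work is in controlling a potential destabilizing quotient $Q$. Here I would use the inclusion $\Omega^1(\P^n,\Delta)\subseteq\clog{\P^n}{D}$ to realize $Q$ as the image of a quotient of $f^*\clog{\P^n}{D}$ twisted down along $f^{-1}(D)$, and then estimate $\deg Q$ from below by $2g-2$ ($g$ the genus of $\widehat C$) plus local contributions over the points of $\widehat C$ above $D$: a point where $C$ meets $H_i$ transversally contributes at least $1-\tfrac1{m_i}\ge 1-\tfrac1{2n+2}$, while a point of higher contact with $H_i$ contributes less but forces $[\ell_i]$ into a proper osculating flag at that point. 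General position of the $H_i$ then controls, at every point of $\widehat C$, both the number of branches of $D$ and the total contact order (bounded by $\binom n2$), and turns the above into an explicit lower bound. For curves that are generic enough this bound is already positive; the delicate cases are curves of small degree or in special position with respect to the arrangement.

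The main obstacle is precisely this last, extremal, analysis, and it is where the hypothesis of general position with respect to hyperplanes and quadrics is essential. For $d=1$ one uses directly that a line meets the $c$ hyperplanes transversally at points lying on at most $n$ of them, so the lower bound is at least $-2+(1-\tfrac1{2n+2})\,c/n$, which is positive exactly because $c\ge\binom{n+2}{2}$. For $d=2$ the conic spans a plane, its tangent hyperplanes restrict there to the tangent lines of a fixed conic and therefore correspond to points of $(\P^n)^*$ lying on a quadric hypersurface; general position with respect to quadrics then allows at most $\binom{n+2}{2}-1$ of the $H_i$ to be tangent to $C$, which keeps enough transverse incidences to conclude. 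The remaining small-degree or linearly degenerate curves, including rational normal curves in proper linear subspaces, are handled in the same spirit, the crucial fact again being that general position with respect to quadrics prevents too many of the $H_i$ from being tangent to such a curve, while the slack $1-\tfrac1{2n+2}$ coming from the multiplicities absorbs the loss at the contact points. Assembling the generic and the extremal cases yields $\deg Q>0$ for every quotient and every $C\not\subseteq D$, that is, $p(\B_+(\xi))=D$.
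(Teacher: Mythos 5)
Your reduction to a curve-by-curve degree estimate breaks down at its central step, and the failure is not repairable by sharpening constants. You claim that every quotient line bundle $Q$ of $f^*\Omega^1(\P^n,\Delta)$ satisfies $\deg Q\geq 2g-2+\sum(\text{local contributions at }f^{-1}(D))$. Dually, this amounts to bounding the degree of every sub-line-bundle $L\hookrightarrow f^*T_{\P^n}(-\log D)$ by comparing it with $T_{\widehat C}$; but that comparison is only available when the image of $L$ in $f^*T_{\P^n}$ is the tangent direction of $C$, and a destabilizing sub-line-bundle of the logarithmic tangent bundle has no reason to point along $C$. This is exactly the phenomenon the theorem is about: for any $2n+1\leq c\leq 4n-3$ general hyperplanes the dual points $H_i^\ast$ lie on some quadric of rank $\leq 4$ in $\P^{n\ast}$, and the corresponding line $\ell\not\subset D$ carries a nowhere-vanishing, \emph{non-tangential} section of $T_{\P^n}(-\log D)|_\ell$, hence a degree-$0$ quotient of $\clog{\P^n}{D}|_\ell$ and a degree-$\leq 0$ quotient of the orbifold bundle. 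Your bound for lines, $-2+(1-\tfrac{1}{2n+2})c/n$, is already positive for all $c\geq 2n+2$ (not ``exactly because $c\geq\binom{n+2}{2}$'', as a quick computation shows), so if it were a valid lower bound for \emph{every} quotient it would contradict these examples, e.g.\ $c=9$ general hyperplanes in $\P^3$. The quadric hypothesis enters not through tangency of the $H_i$ to the curve $C$, as in your conic case, but through the existence of an auxiliary quadric scroll in $\P^{n\ast}$ containing $\ell^\ast$ and the points $H_i^\ast$; your argument never engages with the non-tangential sections that this quadric produces.

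For comparison, the route actually used here (for the sharper Theorem 5.5) is entirely different: first one proves that failure of ampleness modulo $D$ of $\clog{\P^n}{D}$ is equivalent to the existence of a rank-$\leq 4$ quadric through the $H_i^\ast$ (via the fibres of the morphism $\Phi$ defined by the global logarithmic $1$-forms, which are linear subspaces, reducing everything to lines and then to the degree-$1$ maps $\psi:\ell\to H_0$); then, for the orbifold statement, one reduces to equal multiplicities and exhibits explicit global symmetric differentials $\omega$, built from maximal minors of the matrix cutting out $\Phi(p^{-1}(H_i))$, such that $(\ell_0\ell_2\cdots\ell_{2n})\,\omega^{2n}$ pulls back holomorphically to the Fermat cover once $m\geq 2n$; this places $\B_+(\Omega_{(\P^n,\Delta)})$ inside $p(\B_+(\O_{\P(\clog{\P^n}{D})}(1)))$ and concludes from the logarithmic case. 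If you want to salvage a curve-theoretic argument, you must separately treat the quotients of $f^*\clog{\P^n}{D}$ that do not factor through $\Omega_{\widehat C}$, and that is precisely where the quadric condition (in the dual space) must appear.
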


In this paper, pushing further the results of \cite{No86} and \cite{DR20}, we focus on complements of hyperplane arrangements in $\P^n$, $n\geq 2$.

Remark that hyperbolicity of hyperplane complements was already known for several decades. Namely, we have the following results.

\begin{Theo}[Bloch, Cartan, Green, Kobayashi (see \cite{Kob98})]
    The complement of $c\ge 2n+1$ hyperplanes in general position in $\P^n$ is hyperbolic and hyperbolically embedded.
\end{Theo}
\begin{Theo}[Zaidenberg \cite{Zai86}]
The complement of $c\le 2n$ hyperplanes in $\P^n$ is never hyperbolic.
\end{Theo}

\subsection{Results}
In the very specific case of hyperplane arrangements, we first show that the two notions of positivity for the logarithmic cotangent bundle are in fact equivalent.
\begin{theop}{A (Theorem 3.17)}
Let $D=\sum_{i=0}^{c-1}H_i$ be an arrangement of $c$ hyperplanes in general position in $\P^n$. Then $\clog{\P^n}{D}$ is ample modulo $D$ if, and only if, it is almost ample.
\end{theop}


We also improve the lower bound of Theorem \ref{thm:darondeau} on the number of components of a general arrangement.
\begin{theop}{B (Corollary 3.11)}
Let $D=\sum_{i=0}^{c-1}H_i$ be a hyperplane arrangement in general position in $\P^n$. Then the corresponding logarithmic cotangent bundle is \emph{\bf ample modulo boundary} if, and only if the $c$ hyperplanes impose at least $4n-2$ independent conditions on quadrics.
\end{theop}
In particular, one needs at least $4n-2$ components. Moreover, this theorem includes the fact that the genericity condition is optimal.
\newpage
In fact, we are able to describe explicitly the augmented base locus whenever it is not trivial. In particular, we derive
\begin{theop}{C (Theorem 3.21)}
    The logarithmic cotangent bundle associated to $c$ general hyperplanes is big if and only if $c\geq 2n+1$. 
\end{theop}

We also obtain an orbifold version of Theorem B, which also improves \cite{DR20}.
\begin{theop}{D (Theorem 4.7)}
Let $\Delta=\sum_{i=1}^c(1-1/m_i)H_i$ be an orbifold divisor in $\P^n$, where the components $H_i$ are hyperplanes in general position.
Assume that $m_i\geq 2n$ for all $i$. Then the orbifold cotangent bundle $\Omega(\P^n,\Delta)$ is ample modulo $D$ if, and only if, the $c$ hyperplanes impose at least $4n-2$ independent conditions on quadrics.
\end{theop}
When the logarithmic cotangent bundle is ample modulo $D$, it is not difficult to see that the complement $X\setminus D$ is hyperbolic in the sense of Brody. This comes from a classical fact concerning entire curves.
In our last part, we give some applications in hyperbolicity. The following one, which concerns hyperbolicity of a very specific type of Fermat complete intersections, is new to the best of our knowledge.

\begin{theop}{E (Theorem 5.8)}
The Fermat cover associated to an arrangement of \(d\geq4n-2\) hyperplanes in \(\P^ n\) in general position, which impose at least $4n-2$ independent conditions on quadrics, with ramification \(m\geq 2n\), is Kobayashi-hyperbolic.
\end{theop}

\subsection{Organization of the paper}
In section \ref{section:projective_geometry}, we briefly recall some notions of classical projective geometry as dual varieties and rational normal scrolls.

After this, in subsection \ref{subsection:log_pairs}, we first give some definition and basic results concerning log pairs. We provide in subsection \ref{subsection:gauss} an alternative description of the augmented base locus of a globally generated line bundle. We study more carefully in subsection \ref{subsection:hyperplane} the structure of the logarithmic cotangent bundle associated to a hyperplane arrangement. 
Subsections \ref{subsection:jumping_lines} and \ref{subsection:almost_ampleness} are then devoted to the proofs of theorems A and B. Lastly, in subsection \ref{subsection:bplus}, we infer a precise description of the augmented base locus when it is not minimal.

In section \ref{section:orbifolds}, we give a brief overview of Campana's theory of geometric orbifolds, and we prove theorem D, which is more or less an extension of theorem B in this context.

Finally, we present in section \ref{section:hyperbolicity} some applications of our results concerning hyperbolicity of orbifold pairs and Fermat-type complete intersections.

\section{Some projective geometry}
\label{section:projective_geometry}
In this section, we discuss topics of classical algebraic geometry that will be used thereafter.
\subsection{Varieties of linear spaces}
Let $\G(k,n)=\grass(k+1,n+1)$ be the Grassmannian of $k-$linear subspaces in $\P^n$. This is a subvariety of $\P(\bigwedge^{k+1}\C^{n+1})$ of dimension $(k+1)(n-k)$.
We define a subvariety $\Sigma\subset\G(k,n)\times\P^n$ by setting
\[\Sigma=\{(Z,p); p\in Z\}.\]
Let $\pi_1,\pi_2$ be the projection maps from $\Sigma$ to $\G(k,n)$ and $\P^n$ respectively.
We can make use of $\Sigma$ to construct subvarieties of $\P^n$ swept out by $k$-planes.
\begin{Prop}[see \emph{e.g.} \cite{Har92}]
    Let $\Phi\subset\G(k,n)$ be any subvariety. Then 
    \[X=\bigcup_{Z\in\Phi}Z=\pi_2(\pi_1^{-1}(\Phi))\]
    is a subvariety of $\P^n$.
\end{Prop}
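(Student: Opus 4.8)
The plan is to recognise $X$ as the image of a projective variety under a morphism and then invoke the fundamental fact that the image of a projective variety under a morphism is Zariski closed (the main theorem of elimination theory, equivalently properness of projective space over the base). So there are really three things to check: the set-theoretic identity, the fact that $\pi_1^{-1}(\Phi)$ is a projective variety, and the closedness of $\pi_2$-images.

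First I would verify the displayed equality $\bigcup_{Z\in\Phi}Z=\pi_2(\pi_1^{-1}(\Phi))$, which is just a matter of unwinding definitions. A point $p\in\P^n$ lies in the left-hand side exactly when there exists $Z\in\Phi$ with $p\in Z$, i.e. with $(Z,p)\in\Sigma$; and $\bigl((Z,p)\in\Sigma$ and $Z\in\Phi\bigr)$ is precisely the condition $(Z,p)\in\pi_1^{-1}(\Phi)$. Hence such a $Z$ exists if and only if $p$ lies in $\pi_2\bigl(\pi_1^{-1}(\Phi)\bigr)$, which gives the identity.

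Next I would argue that $\pi_1^{-1}(\Phi)$ is a projective variety. The incidence variety $\Sigma$ is closed in $\G(k,n)\times\P^n$: writing a point of $\G(k,n)$ by its Plücker coordinates $[\omega]$ with $\omega\in\bigwedge^{k+1}\C^{n+1}$ decomposable, and a point of $\P^n$ as $[v]$ with $v\in\C^{n+1}$, the condition $p\in Z$ is equivalent to $v\wedge\omega=0$ in $\bigwedge^{k+2}\C^{n+1}$, which is a system of bihomogeneous equations; hence $\Sigma$ is Zariski closed. Since $\G(k,n)$ is projective via the Plücker embedding, the product $\G(k,n)\times\P^n$ is projective (Segre), so $\Sigma$ is a projective variety. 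As $\Phi\subset\G(k,n)$ is closed and $\pi_1$ is a morphism, $\pi_1^{-1}(\Phi)$ is closed in $\Sigma$, hence closed in $\G(k,n)\times\P^n$, and therefore itself a projective variety.

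Finally, $\pi_2$ restricted to $\pi_1^{-1}(\Phi)$ is a morphism from a projective variety to $\P^n$, so its image $X=\pi_2\bigl(\pi_1^{-1}(\Phi)\bigr)$ is a Zariski-closed subset of $\P^n$, i.e. a subvariety. The only point requiring a little care — and it is hardly an obstacle — is confirming that the incidence relation defining $\Sigma$ is genuinely a closed condition; once that is in place, the rest of the argument is purely formal, relying on continuity of the projections and the properness of projective morphisms.
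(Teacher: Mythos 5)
Your proof is correct and complete; the paper in fact states this proposition without proof, treating it as a standard fact of classical projective geometry, and your argument (closedness of the incidence variety $\Sigma$ via the Pl\"ucker condition $v\wedge\omega=0$, followed by the main theorem of elimination theory applied to $\pi_2$ restricted to the projective variety $\pi_1^{-1}(\Phi)$) is exactly the standard justification being implicitly invoked. Nothing is missing.
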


\subsubsection{Rational normal scrolls}
\label{ex_scroll}
A particular case of this construction is obtained when $\Phi\cong\P^1$.
Let $k\geq2$. A subvariety $X\subset\P^n$ given as the union 
\[X=\bigcup_{\lambda\in \P^1}Z_\lambda\]
of a 1-parameter family of $(k-1)-$planes is named a \emph{rational normal }$k$\emph{-fold scroll}. 

One can show (\cite{Har92}, Theorem 19.9) that $X$ has minimal degree $1+n-k$.

In particular, a hypersurface scroll $X$ is a quadric. According to \cite{Har92}, Ex 8.36, $X$ can also be characterized equivalently as:
    \begin{itemize}
        \item a quadric of rank 3 or 4, in the sense that the symmetric matrix representing $X$ has rank 3 or 4;
        \item a cone over a plane conic or a smooth quadric surface in $\P^3$;
        \item an irreducible quadric containing an $(n-2)$-plane.
    \end{itemize}

\subsection{Dual varieties}
Consider the $n$-dimensional projective space $\P^n$. The \emph{dual projective space} $(\P^n)^\ast$ is defined as the projective space parameterising hyperplanes in $\P^n$. 

Let $X$ be a subvariety of $\P^n$ of dimension $k$. We define the dual variety $X^\ast\subset(\P^n)^\ast$ to be the closure of the set of all hyperplanes tangent to $X$ at a smooth point. 

Restricting first to the case where $X$ is a smooth hypersurface, we may also say that $X^\ast$ is the locus of hyperplanes $H\subset\P^n$ such that the intersection $H\cap X$ is singular. It is indeed a variety since it is the image of $X$ under the \emph{Gauss map} $\mathscr{G}_X$, a morphism sending a point $p\in X$ to its embedded tangent hyperplane $\mathbb{T}_pX$; in coordinates, if $f$ is a homogeneous polynomial defining $X$, the Gauss morphism is given by
\[\mathscr{G}_X:p\mapsto\left[\frac{\dr F}{\dr Z_0}(p):\cdots:\frac{\dr F}{\dr Z_n}(p)\right].\]
If $X\subset \P^n$ is a \emph{smooth} hypersurface of degree $d\geq 2$, the morphism $\mathscr{G}_X$ is birational onto its image as well as finite. The dual $X^\ast$ is then again a hypersurface, though in general highly singular (never smooth if $d\geq3$). One can compute its degree $\deg X^\ast= d(d-1)^{n-1}$. In particular, the dual of a smooth quadric hypersurface is again a smooth quadric.

In the general situation, we define the \emph{conormal variety} as the closure $CX$ of the incidence correspondence
\[\{(p,H)\in\P^n\times(\P^n)^\ast|p\in X_{sm}\mbox{ and }\mathbb{T}_pX\subset H\}.\]The dual variety $X^\ast$ is then naturally defined as the image $\pi_2(CX)\subset\P^{n\ast}$ of the second projection.

A fundamental result concerning dual varieties is the reflexivity theorem.
\begin{Theo}
    Let $X\subset \P^n$ be an irreducible variety and $X^\ast\subset\P^{n\ast}$ its dual. Then the conormal variety $CX\subset \P^n\times\P^{n\ast}$ is equal to $CX^\ast\subset\P^{n\ast}\times\P^n\cong\P^n\times\P^{n\ast}$. It follows that $(X^\ast)^\ast=X$: the dual of a variety is the variety itself.
\end{Theo}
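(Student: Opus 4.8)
The plan is to realise the conormal variety as a Lagrangian subvariety of a symplectic vector space and to exploit the symmetry of the canonical symplectic form under interchanging a vector space with its dual. Write $V=\C^{n+1}$, so that $\P^n=\P(V)$ and $\P^{n\ast}=\P(V^\ast)$, let $W=V\oplus V^\ast$ with the symplectic form $\omega\bigl((v,\xi),(v',\xi')\bigr)=\xi(v')-\xi'(v)$, and for $Y$ a subvariety of $\P^n$, of $\P^{n\ast}$, or of their product, let $\widehat Y$ denote the corresponding affine (bi-)cone. The main step is the \emph{Monge--Segre--Wallace lemma}: for $X\subsetneq\P^n$ irreducible the cone $\widehat{CX}\subset W$ over the conormal variety is an irreducible Lagrangian subvariety of $(W,\omega)$, i.e.\ it is isotropic and has dimension $n+1$. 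Irreducibility and the dimension are immediate: over a smooth point $p$ of $X$ (say $\dim X=k$) the fibre of $\pi_1\colon CX\to X$ is the $\P^{n-k-1}$ of hyperplanes containing $\mathbb T_pX$, so $CX$ is the closure of an irreducible $\P^{n-k-1}$-bundle over $X_{sm}$, whence $\dim CX=n-1$ and $\dim\widehat{CX}=n+1=\tfrac12\dim W$. The isotropy is the real content: at a general smooth point $(p,\xi)$ of $\widehat{CX}$, parametrise the conormal cone near $(p,\xi)$ by letting $p$ vary in $\widehat X$ and letting $\xi$ vary so as to keep annihilating the tangent space of $\widehat X$; differentiating the annihilation relation along two such tangent directions of $\widehat{CX}$ expresses $\omega$ of the two corresponding tangent vectors in terms of the second fundamental form of $\widehat X$ at $p$, which is symmetric, so the skew pairing $\omega$ vanishes.

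Granting this, note that the linear isomorphism $\tau\colon W=V\oplus V^\ast\longrightarrow V^\ast\oplus V$, $\tau(v,\xi)=(\xi,v)$ (using $V^{\ast\ast}=V$), pulls back the analogous symplectic form $\omega^\ast$ on $V^\ast\oplus V$ to $-\omega$, so $\tau$ carries Lagrangians to Lagrangians; hence $\tau(\widehat{CX})$ is again an irreducible conic Lagrangian, now inside $(V^\ast\oplus V,\omega^\ast)$, and by the very definition $X^\ast=\pi_2(CX)$ it projects dominantly onto $\widehat{X^\ast}$ under the first projection $V^\ast\oplus V\to V^\ast$. Now apply a converse to the Monge--Segre--Wallace lemma: an irreducible conic Lagrangian $\Lambda$ dominating an irreducible variety $\widehat Z$ under the first projection must equal $\widehat{CZ}$, because over a general smooth point $z$ of $Z$ conicity forces the fibre $\Lambda_z$ to be a linear subspace of $\{z\}^{\perp}$, and the isotropy condition pins it down to be exactly the conormal space $\mathrm{Ann}(T_z\widehat Z)$; passing to closures gives $\Lambda=\widehat{CZ}$. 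Taking $\Lambda=\tau(\widehat{CX})$ and $Z=X^\ast$ yields $\tau(\widehat{CX})=\widehat{CX^\ast}$, which is exactly the assertion $CX=CX^\ast$ under the identification $\P^n\times\P^{n\ast}\cong\P^{n\ast}\times\P^n$. The biduality statement then follows by projecting this equality to the remaining factor: $(X^\ast)^\ast=\pi_2(CX^\ast)=\pi_2(\tau(CX))=\pi_1(CX)=X$.

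I expect the isotropy in the Monge--Segre--Wallace lemma to be the one genuine obstacle; the rest is bookkeeping with incidence correspondences and with $\omega$. I also expect characteristic zero to be used essentially at that point — the second fundamental form computation, equivalently the generic separability of the Gauss map, can fail in positive characteristic, and reflexivity itself fails there — but over $\C$, the setting of this paper, there is no difficulty, and one could equally well run the argument with the contact structure on $\P(T^\ast\P^n)$, for which $CX$ is Legendrian and the same duality symmetry does the work.
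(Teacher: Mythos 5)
The paper does not actually prove this statement: it is quoted in Section 2.2 as a classical background fact (the reflexivity, or biduality, theorem --- see e.g.\ Harris, \emph{Algebraic Geometry: A First Course}, Lecture~16, or Gel'fand--Kapranov--Zelevinsky), so there is no in-paper argument to compare yours against. Your proof is the standard Lagrangian one and is essentially correct: the affine cone $\widehat{CX}\subset V\oplus V^{\ast}$ is an irreducible conic Lagrangian (isotropy coming from the symmetry of the second fundamental form of $\widehat X$, which is indeed where characteristic zero enters), the swap $V\leftrightarrow V^{\ast}$ takes conic Lagrangians to conic Lagrangians, and an irreducible conic Lagrangian is recovered from its image under the first projection. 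One sentence needs tightening: in your converse lemma you assert that \emph{conicity} forces the general fibre $\Lambda_z$ to be a linear subspace, which is not true of cones in general. What actually happens is that isotropy of $\Lambda$ against the vertical tangent directions, together with surjectivity of $d\pi_1$ onto $T_z\widehat Z$ at a general point (generic smoothness --- again characteristic zero), forces $\Lambda_z\subset\mathrm{Ann}(T_z\widehat Z)$; the dimension count $\dim\Lambda_z=\dim\Lambda-\dim\widehat Z=(n+1)-\dim\widehat Z=\dim\mathrm{Ann}(T_z\widehat Z)$ then shows the fibre is dense in that linear space, and irreducibility plus equality of dimensions gives $\Lambda=\widehat{CZ}$. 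With that one repair, and the observation that $\pi_1(CX)=\overline{X_{sm}}=X$ for $X$ irreducible, the argument is complete.
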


Considering a (possibly singular) subvariety $X\subset \P^n$, what can the dimension of $X^\ast$ be? One computes easily the dimension of $CX$: the fibre of the first projection map $CX\to X$ over a smooth point $p\in X_{sm}$ is simply the subspace $\P^{n-k-1}\subset\P^{n\ast}$ of hyperplanes containing the $k-$plane$\mathbb{T}_pX$. Hence the inverse image in $CX$ of the smooth locus of $X$ is irreducible of dimension $n-1$. We conclude that $X^\ast$ is irreducible of dimension less than $n-1$, and that it should be $(n-1)-$ dimensional in most cases. More precisely, $X^\ast$ has dimension $n-1$ exactly when the general tangent hyperplane to $X$ is tangent at only finitely many points. 

\begin{Ex}[see \cite{Har92} \cite{Do12}]
\label{dim_dual_scroll}
A situation where $X^\ast$ fails to be a hypersurface is when the tangent hyperplanes are constant along subvarieties of $X$: for example, when we have a variety ruled by linear subspaces, as cones over subvarieties of $\P^n$. Typical examples are given by rational normal scrolls of dimension $k\geq 3$. Recall that a $k$-dimensional scroll $X=\bigcup_{\lambda\in\P^1}Z_\lambda$ is a variety ruled in $\P^{k-1}$'s. The tangent plane $\mathbb{T}_pX$ at any point $p\in X$ will naturally contain the $(k-1)$-plane $Z_\lambda$ of the ruling through $p$.
We conclude that the general fibre of the second projection $CX\to\P^{n\ast}$ is $(k-2)$-dimensional, so that $X^*$ has dimension $n-k+1$.

For instance, if $X$ is a hypersurface scroll in $\P^n$, or in other words a quadric hypersurface of rank $r\geq4$, then its dual $X^\ast$ is a surface inside a $\P^3$ in the non-degenerate case ($r=4$) and a plane conic curve otherwise.
\end{Ex}
\section{Positivity of the logarithmic cotangent bundle}
\subsection{Conventions}
Let $X$ be a smooth complex projective variety.

For $D=\sum_i\alpha_iD_i$ an $\R-$divisor on $X$, we write as $\lceil D\rceil=\sum_iD_i$ the associated reduced divisor and $|D|$ its support.

Given a vector bundle $E$ on $X$, we denote by $\P(E)$ the bundle of rank one quotients of $E$ and by $\mc{O}_{\P(E)}(1)$ the tautological line bundle. We often identify $\P(E)$ with the projective space of lines $P(E^{\vee})$ of the dual of $E$.

If $L$ is a line bundle over $X$, its \emph{stable base locus} is $\mathbb{B}(L)=\bigcap_{m\in\N}\mathrm{Bs}(L^{\otimes m})$. For $m$ large and divisible enough, we have $\mathbb{B}(L)=\mathrm{Bs}(L^{\otimes m})$. 

We define its \emph{augmented base locus} (see e.g. \cite{Laz04-2}, Definition 10.3.2) as \[\mathbb{B}_+(L):=\bigcap_{p\in\N}\mathbb{B}(L^{\otimes p}\otimes A^{-1})\]
where $A$ is any ample divisor on $X$. Clearly $L$ is ample if and only if $\mathbb{B}_+(L)=\emptyset$, and $L$ is big if and only if $\mathbb{B}_+(L)\neq X$. 

In this last case, its complement is the largest Zariski-open subset $U\subset X\setminus\mathbb{B}(L)$ such that the restriction of the  map \[\Phi_{L^m}|_U:U\to\P(H^0(X,L^{\otimes m}))\] is an isomorphism onto its image, for all $m\gg1$ divisible enough.

For further details on augmented base loci, we refer to Lazarsfeld's book \emph{Positivity in algebraic geometry}, \cite{Laz04-2}.

\subsection{Augmented base loci of globally generated line bundles}

\label{subsection:gauss}
Let $L\to X$ be a holomorphic line bundle on a smooth projective variety $X$.
The evaluation map \[H^0(X,L)\otimes\mc{O}_X\longrightarrow L\] induces a rational map \[\Phi_L:X \dashrightarrow\P H^0(X,L).\]
By definition, $L$ is ample (resp. big) if and only if $\Phi_{L^m}$ is an embedding (resp. is birational onto its image) for some $m>0$. If $L$ is globally generated, $\Phi_L$ is a morphism.

Recall a theorem of Nakamaye concerning the augmented base locus of nef line bundles.
  \begin{Theo}[Nakamaye \cite{Nak00}. See also \cite{Laz04-2}, 10.3]
    Let $L$ be a nef line bundle on a smooth projective variety $X$. Then 
    \[\B_+(L)=\bigcup_{\substack{Z\subset X \\ Z\cdot L^{\dim Z}=0}}Z.\]
    \end{Theo}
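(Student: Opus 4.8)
The plan is to establish the two inclusions separately. Write $N(L):=\bigcup_{Z\subset X,\ L^{\dim Z}\cdot Z=0}Z$ for the null locus on the right-hand side; as $L$ is nef, the condition $L^{\dim Z}\cdot Z=0$ just says that $L|_Z$ fails to be big, and $N(L)$ is genuinely a closed subset, being the union of the finitely many maximal irreducible subvarieties with this property.

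\emph{The inclusion $N(L)\subseteq\B_+(L)$} is elementary. Since $L$ is big, one has $\B_+(L)=\bigcap\supp(E)$, the intersection running over all effective $\Q$-divisors $E$ with $L\equiv_{\Q}A+E$ for some ample $\Q$-divisor $A$ (such decompositions exist by Kodaira's lemma). Let $Z$ be an irreducible subvariety not contained in $\B_+(L)$ and pick such a decomposition with $Z\not\subset\supp(E)$. Restricting to $Z$ yields a numerical identity $L|_Z\equiv_{\Q}A|_Z+E|_Z$ exhibiting $L|_Z$ as an ample class plus an effective one, hence $L|_Z$ is big; being also nef, it satisfies $L^{\dim Z}\cdot Z=(L|_Z)^{\dim Z}>0$ (asymptotic Riemann--Roch together with Kawamata--Viehweg vanishing, valid for big and nef classes). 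Therefore $Z\not\subset N(L)$, and the inclusion follows by contraposition.

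\emph{The inclusion $\B_+(L)\subseteq N(L)$} is the substance of Nakamaye's theorem. Since $N(L)$ is closed, it suffices to show that a point $x\notin N(L)$ lies outside $\B_+(L)$; concretely, one must exhibit, for a fixed ample divisor $A$ and some $m\gg0$, a section of $mL-A$ that does not vanish at $x$. The resource is the hypothesis $x\notin N(L)$, which says that \emph{every} irreducible subvariety $W$ through $x$ has $L^{\dim W}\cdot W>0$. Nakamaye's strategy, run by induction on $\dim X$, is roughly the following: blow up $x$ together with sufficiently many infinitely near points, obtaining $\mu:X'\to X$ with exceptional locus $\exc(\mu)$; use the bigness of $L$ to write $L\equiv_{\Q}A_0+E_0$ with $A_0$ ample and $E_0$ effective, and use the positivity of $L^{\dim W}\cdot W$ for the subvarieties $W$ through $x$ to bound the multiplicities of $E_0$ at the blown-up centres; this permits one to arrange that $\mu^*(mL)$, twisted down by a suitable exceptional divisor, is big and nef — indeed, for good choices, numerically ample plus effective — with trivial multiplier ideal at the points over $x$; one then applies Nadel or Kawamata--Viehweg vanishing on $X'$ to lift a jet and push a section of $mL$ back down to $X$. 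In the reformulation of Ein--Lazarsfeld--Musta\c{t}\u{a}--Nakamaye--Popa this positivity is repackaged through the moving Seshadri constant $\varepsilon(\|L\|;x)$: one has $x\notin\B_+(L)$ if and only if $\varepsilon(\|L\|;x)>0$, and for nef $L$ it coincides with the ordinary Seshadri constant, so the theorem is equivalent to the statement that the Seshadri constant of $L$ is strictly positive precisely away from $N(L)$.

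\emph{The main obstacle} is, in every approach, the same quantitative point: one must control an error term — the multiplicity of the auxiliary effective divisor along the blown-up centres, equivalently the cokernel of a jet-evaluation (or restriction) map on global sections — finely enough to keep it strictly below the positivity budget provided by $L^{\dim W}\cdot W>0$ at every relevant subvariety $W$ through $x$. This is exactly what the vanishing theorems (Kawamata--Viehweg, Nadel) are brought in to accomplish, and it is why this inclusion lies much deeper than the elementary one. A secondary, bookkeeping difficulty is organising the argument so as to cover all of $\B_+(L)$ simultaneously, which is handled by the induction on dimension together with passing to appropriate subvarieties on which the restriction of $L$ is still big.
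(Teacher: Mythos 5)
The paper does not prove this statement at all: it is recalled as a known theorem of Nakamaye (cf.\ Nakamaye's original paper and its generalisation by Ein--Lazarsfeld--Musta\c{t}\u{a}--Nakamaye--Popa) and is used as a black box to derive Lemma \ref{fibres1}. So there is no internal proof to compare against; the question is only whether your proposal stands on its own, and it does not. Your first inclusion $N(L)\subseteq\B_+(L)$ is essentially complete and correct: the characterisation of $\B_+(L)$ as the intersection of $\supp(E)$ over decompositions $L\equiv_{\Q}A+E$ is standard, and restricting such a decomposition to a subvariety $Z\not\subset\B_+(L)$ does show $L|_Z$ is big and nef, hence $(L|_Z)^{\dim Z}>0$. (One small point you gloss over: the a priori closedness of $N(L)$, i.e.\ the finiteness of the maximal subvarieties on which $L$ has zero top self-intersection, itself requires an argument and is usually obtained as part of, or alongside, the theorem rather than assumed before it.)

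The second inclusion $\B_+(L)\subseteq N(L)$ — which you yourself call ``the substance of Nakamaye's theorem'' — is not proved in your text. What you give is a narrative of the strategy: blow up, decompose $L\equiv_{\Q}A_0+E_0$, ``bound the multiplicities of $E_0$ at the blown-up centres,'' apply Kawamata--Viehweg or Nadel vanishing, lift a jet. The entire difficulty of the theorem is concentrated in the step you describe as ``control an error term \ldots{} finely enough to keep it strictly below the positivity budget provided by $L^{\dim W}\cdot W>0$'': no such estimate is stated, let alone established, and without it nothing forces the multiplier ideal to be trivial at the points over $x$ or the vanishing theorem to apply. Identifying where the work lies is not the same as doing it. As written, the proposal is a correct proof of one (elementary) inclusion together with an accurate survey of how the other is proved in the literature; if the intent is to cite Nakamaye for the hard direction, say so explicitly rather than presenting the outline as an argument.
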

It implies the following.
\begin{lemme}\label{fibres1}
    Let $Y$ be a projective variety and consider a regular morphism $f:X\to Y$. For any ample line bundle $A$ on $Y$, the augmented base locus of $f^*A$ is the union of the positive-dimensional fibres of $f$, \emph{i.e.}
    \[\B_+(f^*A)=\{x\in X ; \dim_x f^{-1}(\{f(x)\})>0\}=\colon \mathrm{NFL}(f).\]
\end{lemme}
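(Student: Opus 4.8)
The plan is to derive the statement directly from Nakamaye's theorem applied to $f^*A$, after checking that $f^*A$ is big and nef provided we pass to the Stein factorisation. First I would reduce to the case where $f$ has connected fibres: replace $f: X \to Y$ by its Stein factorisation $X \xrightarrow{g} Y' \xrightarrow{h} Y$, where $g$ has connected fibres and $h$ is finite. Since $h$ is finite and $A$ is ample on $Y$, the pullback $A' := h^*A$ is ample on $Y'$, and $f^*A = g^*A'$. Moreover the positive-dimensional fibres of $f$ are exactly the positive-dimensional fibres of $g$ (as $h$ has finite fibres), so it suffices to prove the statement for $g$, i.e. we may assume from the start that $Y$ is normal and $f$ has connected fibres.

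Next I would establish that $L := f^*A$ is nef and big on $X$ when $f$ is surjective (we may assume $f$ surjective onto its image, replacing $Y$ by $f(X)$). Nefness is immediate since $A$ is ample and nefness is preserved under pullback. For bigness: $L^{\dim X} = (f^*A)^{\dim X}$ computes as $(\deg f)\cdot A^{\dim Y}$ if $\dim X = \dim Y$, which is positive; and if $\dim X > \dim Y$ then $L$ is not big, but in that case $f$ has only positive-dimensional fibres, $\mathrm{NFL}(f) = X$, and also $\B_+(f^*A) = X$ because a pullback from a lower-dimensional variety can never be big — so the statement holds trivially. Hence we may assume $\dim X = \dim Y$, so $L$ is big and nef, and Nakamaye's theorem applies: $\B_+(L) = \bigcup_{Z : Z\cdot L^{\dim Z} = 0} Z$.

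It then remains to identify $\bigcup_{Z : Z\cdot L^{\dim Z}=0} Z$ with $\mathrm{NFL}(f)$. For the inclusion $\supseteq$: if $Z$ is an irreducible subvariety contracted by $f$, i.e. $\dim f(Z) < \dim Z$, then $L|_Z = f^*(A|_{f(Z)})$ is pulled back from a variety of strictly smaller dimension, hence $L^{\dim Z}\cdot Z = 0$, so $Z \subset \B_+(L)$; taking $Z$ through a point lying on a positive-dimensional fibre gives $\mathrm{NFL}(f) \subset \B_+(L)$. For the inclusion $\subseteq$: if $Z$ is irreducible with $L^{\dim Z}\cdot Z = 0$, then $(A|_{f(Z)})^{\dim Z} \cdot f_*[Z] = 0$; since $A$ is ample, the only way an effective cycle of dimension $\dim Z$ can have zero top self-intersection against $A$ is if it is the zero cycle, i.e. $\dim f(Z) < \dim Z$, so $Z$ is contracted and lies in a positive-dimensional fibre. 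Combining the two inclusions (and unravelling the Stein factorisation reduction) yields $\B_+(f^*A) = \mathrm{NFL}(f)$.

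The main obstacle I anticipate is the bookkeeping around the case $\dim X > \dim Y$ and, more subtly, making the projection-formula argument ``$L^{\dim Z}\cdot Z = 0 \iff Z$ is contracted'' fully rigorous: one needs that for an irreducible $Z$ and an ample class $A$ on $f(Z)$, $(f|_Z)^*A$ has vanishing top self-intersection on $Z$ precisely when $f(Z)$ has dimension $< \dim Z$, which follows from the projection formula together with the fact that a power of an ample class on a variety $W$ pairs positively with $[W]$ in the top degree and vanishes in any lower degree against a cycle of dimension exceeding that degree — standard, but worth stating carefully. Everything else is a formal consequence of Nakamaye's theorem as quoted.
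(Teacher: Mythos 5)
Your proof is correct and follows essentially the same route as the paper's: apply Nakamaye's theorem to $f^*A$ and identify its null locus with $\mathrm{NFL}(f)$ via the projection formula ($Z\cdot (f^*A)^{\dim Z}=(\deg f|_Z)\, f(Z)\cdot A^{\dim Z}$, which vanishes exactly when $f$ contracts $Z$). Your explicit treatment of the case where $f$ is not generically finite (so that $f^*A$ is not big and Nakamaye's theorem does not directly apply, but both sides trivially equal $X$) is a point the paper's proof silently glosses over, while the Stein factorisation reduction is harmless but unnecessary.
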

\begin{proof}
\normalsize
  By Nakamaye's theorem,
  \[\B_+(f^*A)=\bigcup_{\substack{Z\subset X \\ Z\cdot f^*A^{\dim Z}=0}}Z.\]
  If $Z\subset X$ is not contained in $\mathrm{NFL}(f)$, then the restriction $f|_Z$ is generically finite. But this implies that
  \[Z\cdot f^*A^{\dim Z}=(\deg f|_Z)f(Z)\cdot A^{\dim Z}>0.\]
  The reverse inclusion is straightforward since by definition \[f(\mathrm{NFL}(f))\cdot A^{\dim \mathrm{NFL}(f)}=0.\]
\end{proof}

In particular, we have the following characterization of the augmented base locus of a globally generated line bundle.

\begin{Prop}
\label{fibres2}
 With the above notations, suppose that $L$ is globally generated. Then the augmented base locus $\B_+(L)$ is the union of all positive-dimensional fibres of $\Phi_L$, \emph{i.e.}
 \[\B_+(L)=\{w\in X;\dim\Phi_L^{-1}(\Phi_L(w))>0\}.\]
\end{Prop}
\begin{proof}
\normalsize
Since $L$ is globally generated, $\Phi_L$ is a well-defined morphism on $X$ and $L=\Phi_L^*\mc{O}_{\P H^0(X,L)}(1)$. So we can apply lemma \ref{fibres1} below with $A=\mc{O}_{\P H^0(X,L)}(1)$.
\end{proof}
\subsection{Positivity of logarithmic pairs}
\label{subsection:log_pairs}
Let $(X,D)$ be a smooth log pair, that is, $X$ is a smooth projective variety and $D$ is a simple normal crossings divisor.
The logarithmic tangent bundle $T_X(-\log D)$ is defined as the locally free subsheaf of $T_X$ of vector fields tangent to $D$. If $(z_1,\dots,z_n)$ are local coordinates on an open set $U\subset X$ such that $U\cap D=(z_1\dots z_r=0)$, then $T_X(-\log D)|_U$ is generated by \[z_1\frac{\partial}{\partial z_1},\dots,z_r\frac{\partial}{\partial z_r},\frac{\partial}{\partial z_{r+1}},\dots,\frac{\partial}{\partial z_n}.\] We now define the logarithmic cotangent bundle $\Omega_X(\log D)$ as the dual bundle of $T_X(-\log D)$. Concretely, it is the $\mc{O}_X-$module of meromorphic 1-forms with at most logarithmic poles, locally generated by the forms \[\frac{dz_1}{z_1},\dots,\frac{dz_r}{z_r}, dz_{r+1},\dots,dz_n.\]

Let $D=\sum_{1\leq i\leq c}D_i$ be the decomposition of $D$ into irreducible components.
Then the logarithmic cotangent bundle fits in the residue exact sequence \[0\longrightarrow\Omega_X\longrightarrow\clog{X}{D}\stackrel{\mathrm{Res}}{\longrightarrow}\bigoplus_{1\le i\le c}\mc{O}_{D_i}\longrightarrow0.\]Here the residue morphism is given over any open subset $U\subset X$ by \[\mathrm{Res}\left(\alpha+\sum_{i=1}^c\beta_i\frac{d\sigma_i}{\sigma_i}\right)=(\beta_i|_{D_i})_{1\leq i\leq c},\]where $\sigma_i$ is a section defining $D_i$ over $U$.

We have several other residue exact sequences corresponding to the various subsets of components of $D$, for any $I\subset\{1,\dots,c\}$ with $|I|<n$:
\[0\longrightarrow\Omega_X(\log D(I^\complement))\longrightarrow\clog{X}{D}\stackrel{\mathrm{Res}}{\longrightarrow}\bigoplus_{i\in I}\mc{O}_{D_i}\longrightarrow0,\]
where we denote $D(I^\complement)=\sum_{i\in I^\complement}D_i$.
The restriction of the above sequence to $D_I=\bigcap_{i\in I}D_i$ leads to a quotient bundle \[\clog{X}{D}|_{D_I}\stackrel{\mathrm{Res}}{\twoheadrightarrow}\mc{O}_{D_I}^{\oplus |I|}\]which induces an inclusion \[\widetilde{D_I}:=\P(\mc{O}^{\oplus r}_{D_I})\cong D_I\times\P^{|I|-1}\hookrightarrow\P(\clog{X}{D}).\]
Note that $\dim \widetilde{D_I}=\dim D_I+|I|-1=n-1$. 
When $X$ is projective of dimension $n$ at least 2 and
the number of components of $D$ is positive (i.e. $|D|=\emptyset$), as soon as $\dim D_I=n-|I|>0$, the trivial quotient of $\clog{X}{D}|_{D_I}$ prevents the logarithmic cotangent bundle from being ample. Said differently, it implies that for any such $I$,
\[\widetilde{D_I}\subset\B_+(\mc O_{\P(\clog{X}{D})}(1)).\]

A natural question to ask then is whether the $\widetilde {D_I}$ the only obstructions to the ampleness of the logarithmic cotangent bundle. This motivates the following definition.
\begin{Def}[Brotbek-Deng '18\cite{BD18}]
We say that the pair $(X,D)$ has \emph{\bf almost ample logarithmic cotangent bundle} if \[\mathbb{B}_+(\mc{O}_{\P(\clog{X}{D})}(1))\subset\bigcup_{|I|\leq n-1}\widetilde{D_I}.\]
\end{Def}
We will also be interested in the following \emph{a priori} weaker notion.
\begin{Def}[Darondeau-Rousseau '24 \cite{DR20}]
Let $p:\P(\clog{X}{D})\to X$ be the natural projection. The logarithmic cotangent bundle is said \emph{ample modulo boundary} if $p(\B_+(\mc{O}_{\P(\clog{X}{D})}(1)))=D$.
\end{Def}

\subsection{The case of hyperplane arrangements}
\label{subsection:hyperplane}
From now on, we will focus on the special case of hyperplane arrangements in $\P^n$. We start with two observations.

\begin{lemme}[\cite{BD18}, Prop. 4.2]
\label{irregularity}
If $D=\sum_ {i=1}^cD_i$ is an arrangement of $c$ hypersurfaces in general position in $\P^n$, then the associated logarithmic cotangent bundle has irregularity $h^0(\P^n,\clog{\P^n}{D})=c-1$.
\end{lemme}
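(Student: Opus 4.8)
The plan is to read off $h^0(\P^n,\clog{\P^n}{D})$ from the residue exact sequence recalled above, using two standard facts about $\P^n$: it carries no global holomorphic $1$-form, and $H^1(\P^n,\Omega_{\P^n})\cong\C$, generated by the hyperplane class. (General position is not really needed here beyond the fact that $D$ is a smooth, hence SNC, divisor so that $\clog{\P^n}{D}$ is a bundle.)

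First I would take the long exact cohomology sequence of
$$0\longrightarrow\Omega_{\P^n}\longrightarrow\clog{\P^n}{D}\stackrel{\mathrm{Res}}{\longrightarrow}\bigoplus_{i=1}^{c}\O_{D_i}\longrightarrow0.$$
Each $D_i$ is a smooth hypersurface in $\P^n$ with $n\geq2$, hence connected, so $H^0(D_i,\O_{D_i})=\C$ (alternatively, use $0\to\O_{\P^n}(-D_i)\to\O_{\P^n}\to\O_{D_i}\to0$ and the vanishing $H^0(\O(-D_i))=H^1(\O(-D_i))=0$). Since also $H^0(\P^n,\Omega_{\P^n})=0$, the sequence gives an isomorphism $H^0(\P^n,\clog{\P^n}{D})\cong\ker\delta$, where $\delta\colon\C^{c}\cong\bigoplus_iH^0(D_i,\O_{D_i})\to H^1(\P^n,\Omega_{\P^n})\cong\C$ is the connecting homomorphism. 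So the whole statement reduces to showing $\delta\neq0$, which then forces $\dim\ker\delta=c-1$.

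The key step is to identify $\delta$: its $i$-th component $\delta_i\colon H^0(D_i,\O_{D_i})\to H^1(\P^n,\Omega_{\P^n})$ sends the constant function $1$ to $c_1(\O_{\P^n}(D_i))$. Indeed, on an open cover $(U_\alpha)$ with $D_i\cap U_\alpha=(\sigma^i_\alpha=0)$, the local logarithmic forms $\tfrac{d\sigma^i_\alpha}{\sigma^i_\alpha}$ are sections of $\clog{\P^n}{D}$ over $U_\alpha$ with residue $1$ along $D_i$, and their Čech coboundary $\tfrac{d\sigma^i_\alpha}{\sigma^i_\alpha}-\tfrac{d\sigma^i_\beta}{\sigma^i_\beta}=d\log(\sigma^i_\alpha/\sigma^i_\beta)$ is a $1$-cocycle with values in $\Omega_{\P^n}$ representing $\delta_i(1)$; this is exactly the $d\log$ description of $c_1(\O_{\P^n}(D_i))=(\deg D_i)\,h$ with $h$ the generator of $H^1(\P^n,\Omega_{\P^n})$. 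As $\deg D_i\geq1$, each $\delta_i$ is an isomorphism, so $\delta\neq0$ and $H^0(\P^n,\clog{\P^n}{D})\cong\ker\delta=\{(\lambda_i)\mid\sum_i\lambda_i\deg D_i=0\}$, of dimension $c-1$. I expect this identification of the connecting map with cup product by the first Chern class to be the only nontrivial point; it is classical, but it is where the real (small) work lies.

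If one wants an argument that also produces explicit generators, I would instead establish $h^0\geq c-1$ by hand and combine it with the bound $\dim\ker\delta\leq c-1$ from $\delta\neq0$: fixing sections $\sigma_i$ cutting out $D_i$, for any integers $\lambda_i$ with $\sum_i\lambda_i\deg D_i=0$ the product $\prod_i\sigma_i^{\lambda_i}$ is a rational function with divisor $\sum_i\lambda_iD_i$, so $\sum_i\lambda_i\tfrac{d\sigma_i}{\sigma_i}=d\log(\prod_i\sigma_i^{\lambda_i})$ is a global section of $\clog{\P^n}{D}$ with residue $\lambda_i$ along $D_i$; these span a $(c-1)$-dimensional subspace on which $\mathrm{Res}$ is injective, and since $H^0(\P^n,\clog{\P^n}{D})\hookrightarrow\C^{c}$ with image contained in $\ker\delta$, both inequalities match and give $h^0=c-1$.
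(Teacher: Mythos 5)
Your proof is correct and takes essentially the same route as the paper: both compute $h^0$ from the residue exact sequence via the connecting map $\delta\colon\C^c\cong\bigoplus_iH^0(D_i,\O_{D_i})\to H^1(\P^n,\Omega_{\P^n})\cong\C$, and your second paragraph (the explicit sections $\sum_i\lambda_i\,d\sigma_i/\sigma_i$ with $\sum_i\lambda_i\deg D_i=0$) is precisely the paper's basis $\omega_i=\lambda_1 d\log s_i-\lambda_i d\log s_1$. If anything, your identification of $\delta_i(1)$ with $c_1(\O_{\P^n}(D_i))$ makes explicit the one point the paper leaves implicit, namely that $\delta$ is nonzero: the relations $\lambda_1\delta(e_i)=\lambda_i\delta(e_1)$ used there only show the rank is at most one.
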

\begin{proof}
\normalsize
For $1\leq i\leq c$, let $s_i\in H^0(\P^n,\mc{O}_{\P^n}(\lambda_i))$ be a section defining the hypersurfaces $D_i$. We claim that the forms $\omega_i=\lambda_1d\log s_i-\lambda_id\log s_1$ form a basis of global sections of $\clog{\P^n}{D}$.
First, these are indeed well-defined global sections. If we denote respectively by $x_1,\dots,x_n$ and $y_1,\dots,y_n$ the standard affine coordinates over the open subsets $U_0=\{Z_0\neq0\}$ and $U_1=\{Z_1\neq 0\}$, we have over $U_0\cap U_1$
\begin{align*}
\lambda_1d\log s_i(x)-\lambda_id\log s_1(x)&=\lambda_1d\log (x_1^{\lambda_1}s_i(y))-\lambda_id\log (x_1^{\lambda_1}s_1(y))\\ &=\lambda_1d\log s_i(y)-\lambda_id\log s_1(y).
\end{align*}
The residue exact sequence induces a long exact sequence in cohomology
\[0\longrightarrow H^0(\P^n,\clog{\P^n}{D})\stackrel{\Res}{\longrightarrow} \bigoplus_{i=1}^cH^0(D_i,\mc{O}_{D_i})\cong\C^c\stackrel{\delta}{\longrightarrow}H^1(\P^n,\Omega_{\P^n}).\]If we denote by $e_i\in H^0(D_i,\mc{O}_{D_i})$ the constant section equal to $1$, we compute $\mathrm{Res}(\omega_i)=\lambda_1e_i-\lambda_ie_1$.
As a consequence, we get $\lambda_1\delta(e_i)=\lambda_i\delta(e_1)$. In particular, the map $\delta$ has rank 1, so that the dimension of $H^0(\P^n,\clog{\P^n}{D})$ is exactly $c-1$.
\end{proof}
In particular, when the $D_i$'s are hyperplanes defined by linear forms $\ell_i$, a basis of global sections is given by the $1$-forms $\frac{d\ell_i}{\ell_i}-\frac{d\ell_1}{\ell_1}$.

It is now easy to see whether the associated logarithmic cotangent bundle is globally generated.
\begin{lemme}
 The logarithmic cotangent bundle associated with an arrangement of $c\geq n+1$ hyperplanes $H_j$ in general position in $\P^n$ is globally generated.
\end{lemme}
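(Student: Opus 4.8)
The plan is to exhibit, for each point $x \in \P^n$, enough global sections of $\clog{\P^n}{D}$ whose values span the fiber $\clog{\P^n}{D}_x$. From Lemma~\ref{irregularity} we already have a concrete basis of $H^0(\P^n, \clog{\P^n}{D})$: if $\ell_1, \dots, \ell_c$ are the linear forms cutting out the hyperplanes $H_1, \dots, H_c$, then the logarithmic forms $\omega_i = d\log\ell_i - d\log\ell_1$ (all $\lambda_i = 1$ here since the $H_j$ are hyperplanes) span a $(c-1)$-dimensional space of global sections. Equivalently, after clearing the normalization by $\ell_1$, the global sections are spanned by all differences $d\log\ell_i - d\log\ell_j$, $1 \le i,j \le c$. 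So the task reduces to: show that for every $x \in \P^n$, the $n$-dimensional fiber is spanned by the values at $x$ of these logarithmic differentials.

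First I would fix $x \in \P^n$ and work in a local trivialization of $\clog{\P^n}{D}$ near $x$. There are two cases. If $x$ lies on none of the $H_j$, then near $x$ the bundle $\clog{\P^n}{D}$ is just $\Omega_{\P^n}$, and the fiber is $n$-dimensional; I need $n$ of the forms $d\ell_i/\ell_i$ to have linearly independent values, which after multiplying by the nonzero scalars $\ell_i(x)$ amounts to asking that among the differentials $d\ell_i$ (covectors $\ell_i \in (\C^{n+1})^*/\langle \text{position}\rangle$, i.e. their images in the cotangent space $T_x^*\P^n \cong \C^n$) at least $n$ are linearly independent — and since $c \ge n+1$ hyperplanes in general position means any $n$ of the $\ell_i$ are linearly independent as linear forms on $\C^{n+1}$, their restrictions to the $n$-dimensional subspace $T_x\P^n$ still span (here one must be slightly careful: the relevant statement is that the images of $\ell_1, \dots, \ell_{n+1}$ in $T_x^*\P^n$ span, which holds because general position forces any $n+1$ of them to be a basis of $(\C^{n+1})^*$, hence their images span the $n$-dimensional quotient). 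If $x$ lies on exactly the hyperplanes $H_{j_1}, \dots, H_{j_r}$ (with $r \le n$ by general position, since $n+1$ hyperplanes in general position have empty common intersection), then in suitable coordinates $z_1, \dots, z_n$ with $H_{j_k} = \{z_k = 0\}$ the fiber of $\clog{\P^n}{D}$ at $x$ has basis $dz_1/z_1, \dots, dz_r/z_r, dz_{r+1}, \dots, dz_n$. The forms $d\log\ell_{j_k}$ for $k = 1,\dots,r$ contribute, up to units, exactly the $dz_k/z_k$; and I need to produce $n - r$ more forms among the $d\log\ell_i$ (for $i$ with $\ell_i(x) \ne 0$) whose values, modulo the span of $dz_1, \dots, dz_r$, span the remaining $(n-r)$-dimensional space — equivalently, the linear forms $\ell_i$ restricted to $\ker(dz_1) \cap \dots \cap \ker(dz_r) \cap T_x\P^n$ should span its dual. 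Again this follows from general position: among the remaining $c - r \ge n+1-r$ hyperplanes, any collection of the right size has the needed independence because any $n+1$ of the original $\ell_i$ form a basis of $(\C^{n+1})^*$.

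The one point requiring care — and what I'd flag as the main obstacle — is bookkeeping the passage between "linear forms on $\C^{n+1}$ in general position" and "their differentials at $x$ viewed inside the $n$-dimensional (log-)cotangent fiber," making sure the count $c \ge n+1$ is exactly what is needed and that the appearance of $r \le n$ hyperplanes through $x$ never eats into the budget fatally. Concretely I would argue: the common zero locus of any $n+1$ of the $H_j$ is empty (general position), so at most $n$ pass through $x$, leaving at least $c - n \ge 1$ away from $x$; and for the span condition, it suffices to note that $\{\ell_i : \ell_i(x) \ne 0\} \cup \{\ell_i : \ell_i(x) = 0\}$ together with the Euler relation recover a basis of $(\C^{n+1})^*$ from any $n+1$ of the $\ell_i$, so their images generate every relevant quotient. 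Once this is set up, global generation is immediate: the evaluation map $H^0(\P^n, \clog{\P^n}{D}) \otimes \O_{\P^n} \to \clog{\P^n}{D}$ is surjective on each fiber, which is the definition.
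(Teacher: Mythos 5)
Your proof is correct and follows essentially the same route as the paper: exhibit the explicit global sections $d\log\ell_i-d\log\ell_j$ and verify fiberwise spanning by reducing, via general position, to $n+1$ of the hyperplanes forming a coordinate simplex that contains every component through the given point. You are in fact somewhat more careful than the paper's one-line argument, which fixes a single simplex $H_0,\dots,H_n$ once and for all (so that its $n$ sections alone do not reach the residue direction at a point of some other $H_i$); your case analysis, together with the bookkeeping you flag between linear forms on $\C^{n+1}$ and their images in the log-cotangent fiber, closes that gap.
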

\begin{proof}
\normalsize
Without loss of generality, we can assume that $H_j=(Z_j=0)$ for $j=0,\dots,n$.
 The forms $\omega_j=\frac{dZ_j}{Z_j}-\frac{dZ_0}{Z_0}$ are global sections of $\clog{\P^n}{D}$. Hence a point $z$ in the base locus of $\clog{\P^n}{D}$ would satisfy $Z_j\xi_0-Z_0\xi_j=0$ for every $\xi\in\P^{n}$, which is impossible.
\end{proof}

From now on, we fix an arrangement $D=\sum_{i=0}^{n+k}H_i$ of $n+k+1\geq n+2$ hyperplanes in general position in $\P^n$. Without loss of generality, we assume that the first $n+1$ components are the coordinate hyperplanes. For $i=1,\dots,k$, let $\ell_i=a_0^iZ_0+\cdots+a_n^iZ_n$ be a linear form defining the hyperplane $H_{n+i}$.

In view of proposition \ref{fibres2}, we have to compute the fibres of the map $\Phi:=\Phi_{\O_{\P(\clog{\P^n}{D})}(1)}$ in order to study the augmented base locus of $\clog{\P^n}{D}$. To do so, we describe explicitly the morphism $\Phi$ in local coordinates.

Let us work in the affine chart $U_0=\{Z_0\neq0\}\subset\P^n$ equipped with standard coordinates $z_1,\dots,z_n$. By the proof of lemma \ref{irregularity}, a basis of global sections of $\clog{\P^n}{D}$ is given by \[\frac{dz_1}{z_1},\dots,\frac{dz_n}{z_n},\frac{d\ell_1}{\ell_1},\dots,\frac{d\ell_k}{\ell_k}.\]On the open set $U=U_0\cap\bigcap_{j=1,\dots,k}(\ell_j\neq0)$, a holomorphic frame for $T_{\P^n}(-\log D)$ is given by $z_1\frac{\dr}{\dr z_1},\dots,z_n\frac{\dr}{\dr z_n}$. Therefore, we may identify $\P(\clog{\P^n}{D}|_U)$ with $U\times\P^{n-1}$ via
\[\begin{array}{rcl}
U\times\P^{n-1}&\longleftrightarrow& \P(\clog{\P^n}{D}|_U)   \\
(z,[\xi])&\longmapsto&\left(z,\left[\xi_1z_1\frac{\dr}{\dr z_1}+\cdots+\xi_nz_n\frac{\dr}{\dr z_n}\right]\right)\end{array}\]
Since $\clog{\P^n}{D}$ is globally generated with irregularity $h^0(\P^n,\clog{\P^n}{D})=n+k$, we have a quotient
\[H^0(\P^n,\clog{\P^n}{D})\otimes\O_{\P^n}\to \clog{\P^n}{D}\to0\]inducing a map
\[\Phi:\P(\clog{\P^n}{D})\to\P(H^0(\P^n,\clog{\P^n}{D})\otimes\O_{\P^n})\simeq\P^n\times\P^{n+k-1}\twoheadrightarrow\P^{n+k-1}.\]

The morphism $\Phi$ restricted to $U$ then has the following expression:
\begin{align*}
\Phi(z,[\xi_1:\cdots:\xi_n])&=\left[\frac{dz_1}{z_1}\cdot\xi:\cdots:\frac{dz_n}{z_n}\cdot\xi:\frac{d\ell_1}{\ell_1}\cdot\xi:\cdots:\frac{d\ell_k}{\ell_k}\cdot\xi\right]\\
&=\left[\xi_1:\cdots:\xi_n:\frac{\sum_{j=1}^na_j^1\xi_jz_j}{a_0^1+\sum_{j=1}^na_j^1z_j}:\cdots:\frac{\sum_{j=1}^na_j^k\xi_jz_j}{a_0^k+\sum_{j=1}^na_j^kz_j}\right].
\end{align*}
From this we can obtain an explicit description of the fibres of $\Phi|_U$.
Take a vector $V=[V_1:\cdots:V_{n+k}]\in\P^{n+k-1}$. The fibre of $\Phi|_{U}$ above $V$ is described by the equations

\begin{equation}\tag{$\dagger$}\label{equations_fibre}\Phi(z,[\xi])=V\Longleftrightarrow \left\{\begin{array}{rclr}\xi_i&=&\lambda V_i, &i=1,\dots,n\\\sum_{i=0}^na_i^j(V_i-V_{n+j})z_i&=&0,&j=1,\dots,k\end{array}\right.\mbox {for some } \lambda\in\C\setminus\{0\}.\end{equation}
Therefore
\[\pi\left(\Phi^{-1}(V)\right)\cap U=\left\{(z_1,\dots,z_n);\sum_{i=0}^na_i^j(V_i-V_{n+j})z_i=0 \mbox{ for all }j=1,\dots,k\right\},\]
which is a linear subspace of $\P^n$ (for convenience, we let here $z_0=1$ and $\xi_0=V_0=0$).

Note that we can work out the same proof for any open subset $\P^n\setminus\bigcup_{i\notin I}H_i$, $|I|=n$, up to a linear coordinate change. Hence the result holds on the whole of $\P^n$.

Together with the results of \ref{subsection:gauss}, we have therefore proven that:
\begin{Propperso}
 The logarithmic cotangent bundle $\clog{\P^n}{D}$ fails to be ample modulo $D$ if and only if the projection of the augmented base locus $\B_+(\O_{\P(\clog{\P^n}{D})}(1))$ contains a line not in $\supp D$.

In other words, $\clog{\P^n}{D}$ is ample modulo $D$ if and only if its restriction to any line $l\not\subset \supp D$ is ample.
\end{Propperso}

The existence of a solution $(z,[\xi])$ to the equations \ref{equations_fibre} is equivalent to the matrix
\[\begin{pmatrix}
    a_{0}^1(V_{n+1}-V_0) & \cdots & a_{n}^1(V_{n+1}-V_n)\\
    \vdots&\ddots&\vdots\\
    a_{0}^k(V_{n+k}-V_0) & \cdots & a_{n}^k(V_{n+k}-V_n)
\end{pmatrix},\] being of rank $<n+1$.

Hence the image $W$ of $\Phi$ is described by the vanishing of the maximal minors of $M$.
Moreover, these computations characterize the positive dimensional fibres of $\Phi$ as the points $V$ where $\rg M<n$.

Hence, the logarithmic cotangent bundle is never big if $k\le n-1$.
On the other hand, by the general position assumption, the matrix $M$ is generically of maximal rank when $k\ge n$. In other words, $\clog{\P^n}{D}$ is big if and only if $n+k+1\ge 2n+1$.

\subsection{Construction of obstruction lines}
\label{subsection:jumping_lines}
In this section, we will use some notions and results from \cite{DK93}.

As in subsection \ref{subsection:hyperplane}, we consider a logarithmic pair $(\P^n,D)$, where $D=H_0+\cdots+H_{n+k}$ is an arrangement of $n+k+1\geq n+2$ hyperplanes in general position. We assume without loss of generality that the $n+1$ first components are the coordinates hyperplanes, i.e. $H_i=(Z_i=0)$ for $i=0,\dots,n$. We aim to prove the following result.
\begin{Theoperso}
\label{ampleness}
The logarithmic cotangent bundle of the pair $(\P^n,D)$ \textbf{fails to be ample modulo }$D$ if, and only if, there exists a line $l\not \subset D$ such that $l^\ast\cong\P^{n-2}$ and the points $H_i^\ast$ are contained in a same quadric hypersurface in $\P^{n\ast}$.
\end{Theoperso}

Let us first explain how this condition is translated in the original space $\P^n$.
Let $X\subset\P^{n\ast}$ denote a quadric hypersurface containing an $(n-2)$-plane. We have seen in Example \ref{dim_dual_scroll} that $X$ is a quadric of rank $\leq 4$, swept out by a 1-parameter family of $(n-2)$-planes: $X=\bigcup_{\lambda\in\P^1}Z_\lambda$. The dual variety $X^\ast$ is in general a non-degenerate quadric surface contained in a $\P^3\subset\P^n$ (when $X$ has rank $\leq3$, $X$ is a cone over a plane conic, and $X^\ast$ is also a cone over the dual conic curve). It is actually a ruled surface $\P^1\times\P^1$, whose two rulings correspond dually to the two rulings in $\P^{n-2}$'s of $X$.

\begin{rk}
The condition that the hyperplanes $H_i$ belong to a same quadric rational scroll $X$ in $\P^{n\ast}$ means that the $H_i$ are all tangent to the dual quadric surface $X^\ast\subset\P^n$.
\end{rk}
For sake of simplicity, considering $n+k+1$ hyperplanes $H_0,\dots,H_{n+k}$ in general position in $\P^n$, we introduce the following property. 
\begin{mdframed}
\begin{cond}[Condition $\color{red}\bigstar$]
~

The points $H_0^\ast,\dots,H_{n+k}^\ast$ in the dual projective space $\P^{n\ast}$ corresponding to the hyperplanes $H_0,\dots,H_{n+k}$ do not belong to a same quadric hypersurface of rank at most $4$.
\end{cond}
\end{mdframed}
Theorem \ref{ampleness} asserts that this is equivalent to $\clog{\P^n}{D}$ being ample modulo $D$.
\begin{corperso}
\label{main_thm}
The logarithmic cotangent bundle of the pair $(\P^n,D)$ is ample modulo $D$ if, and only if, the hyperplanes $H_i$ impose at least $4n-2$ linearly independent conditions on quadrics.
In particular, if $D$ has no more than $4n-3$ components, there always exist obstructions to ampleness away from $D$.
\end{corperso}
Note that for $n=2$, we retrieve Noguchi's result with six lines on $\P^n$. For $n=3$, we obtain the same bound as in \cite{DR20}, but as soon as $n\ge4$, our result is strictly better.

\begin{proof}
\normalsize
Following the previous section, the strategy of the proof relies on the search for lines contracted by the map $\Phi$.
Since the logarithmic cotangent bundle is globally generated, $\Phi$ is a morphism and an embedding when restricted to each fibre of the projection $\P(\clog{\P^n}{D})\to\P^n$.

According to the proposition \ref{construction} below, there exists a line $l\not\subset \supp D$ contracted by $\Phi$ if, and only if, there exists a degree 1 rational map $l\to H_0$ preserving each component $H_i$. Applying further \ref{quadric}, producing such a map amounts to the fact that the points of $(\P^n)^\ast$ corresponding to the hyperplanes $H_i$ lie on a same quadric $X$ of rank $\leq 4$. 

Moreover, we can compute the dimension of the space of quadrics of rank $\le4$ on $\P^n$. Choosing such a form amounts to taking a 4-codimensional subspace of $\C^{n+1}$ together with a quadratic form on $\C^4$. We find that the dimension equals $4(n-3)+\frac{4\times5}{2}-1=4n-3$.

This ends the proof of both the theorem and its corollary.
\end{proof}

We shall now prove in two main steps why the existence of such obstruction lines away from $\supp D$ is equivalent to the above condition concerning quadrics.

Let us first give some terminology.
\begin{Def}[\cite{DK93}]
A line $l\subset\P^n$ is called superjumping for the pair $(\P^n,D)$ if the restriction $\clog{\P^n}{D}|_l$ is not ample.
\end{Def}
Using the residue exact sequence, it is clear that all the lines belonging to $\supp D$ are superjumping.

In other words, the logarithmic cotangent bundle $\clog{\P^n}{D}$ is ample modulo boundary if and only if there is no superjumping line away from $\supp D$. The following characterization of superjumping lines is straightforward.
\begin{lemmeperso}
\label{trivial_quotient}
    A line $l\subset\P^n$ is superjumping if, and only if, the restriction $T_{\P^n}(-\log D)|_l$ of the logarithmic tangent bundle has a non-vanishing global section.
\end{lemmeperso}
\begin{proof}
\normalsize
   Recall that a vector bundle on $\P^1$ can be decomposed as a direct sum of line bundles. Since $\clog{\P^n}{D}|_l$ is globally generated, there exist non-negative integers $a_1\le \cdots\le a_n$ such that \[\clog{\P^n}{D}|_l=\mc{O}_l(a_1)\oplus\cdots\oplus\mc{O}_l(a_n),\]so that \[T_{\P^n}(-\log D)|_l=\mc{O}_l(-a_1)\oplus\cdots\oplus\mc{O}_l(-a_n).\]By definition, the line $l$ is superjumping if and only if $a_1=0$, \emph{i.e.} if $T_{\P^n}(-\log D)|_l$ has a trivial line sub-bundle.
\end{proof}

The next proposition is the key argument in the proof of the theorem. Note that even though this results appears as Prop. 7.5 in \cite{DK93}, our proof is completely different and self-contained. Ours is geometric and constructs explicitly the map whereas the authors of \cite{DK93} use algebraic constructions called \emph{elementary transformations}, in order to reconstruct inductively the logarithmic cotangent bundle, adding each component of $D$ one by one.
\begin{Propperso}
\label{construction}
Let $l$ be a line not lying in any hyperplane $H_i$. Denote by $p_i$ the point corresponding to $H_i$ in the dual projective space $(\P^n)^\ast$, and by $Z$ the 2-codimensional linear subspace of $(\P^n)^\ast$ corresponding to $l$. Then the following are equivalent:
\begin{enumerate}
    \item there exists a non-zero (even non-vanishing) global section of the bundle $T_{\P^n}(-\log D)|_l$;
    \item there exists a regular map $\psi:l\to H_0$ of degree 1 such that for all $1\le i\le n+k$, $\psi(l\cap H_i)\subset H_0\cap H_i$.
\end{enumerate}
\end{Propperso}

\begin{proof}
\normalsize
Let us construct in coordinates the desired map.
Let $X_0,\dots,X_n$ be homogeneous coordinates on $\P^n$ such that $H_0=(X_0=0)$ and $l=(X_2=\cdots=X_n=0)$.
On the open subset $U_0=\P^n\setminus H_0$, consider the inhomogeneous coordinates $(z_1,\dots,z_n)=(X_1/X_0,\dots,X_n/X_0)$. Then $z_1$ defines a coordinate on $l\setminus H_0\simeq \C$.

Assume first that there exists $\xi\in H^0(l,T_{\P^n}(-\log D)|_l)\setminus\{0\}$.
 We can write \[\xi=\xi_1(z_1)\frac{\dr}{\dr z_1}+\cdots+\xi_n(z_1)\frac{\dr}{\dr z_n}\]for some holomorphic functions $\xi_j:\C\to\C$.

Define now inhomogeneous coordinates on $U_1=\P^n\setminus \{X_1=0\}$ by \[(x_0,x_2,\dots,x_n)=(X_0/X_1,X_2/X_1,\dots,X_n/X_1)=(1/z_1,z_2/z_1,\dots,z_n/z_1).\]
The Jacobian matrix associated to this change of coordinates in $U_0\cap U_1$ is given by \[\left(\begin{array}{cccc}
    -1/z_1^2 & 0 &\cdots&0 \\
    -z_2/z_1^2 & 1/z_1&\ddots&\vdots\\
    \vdots &&\ddots&0\\
    -z_n/z_1^2&\cdots&\cdots&1/z_1
\end{array}\right).\]
On $l\cap U_0\cap U_1$, the vector field $\xi$ writes as
\[\xi(1/x_0)=-x_0^2\xi_1(1/x_0)\frac{\dr}{\dr x_0}+x_0\xi_2(1/x_0)\frac{\dr}{\dr x_2}+\dots+x_0\xi_n(1/x_0)\frac{\dr}{\dr x_n}.\]
Since $\xi$ needs to be holomorphic, the functions $x_0^2\xi_1(1/x_0),x_0\xi_2(1/x_0),\dots,x_0\xi_n(1/x_0)$ are holomorphic in the variable $x_0$. This shows that the holomorphic functions $x_j:\C\to\C$ are in fact polynomials, of degree $\leq 2$ for $j=1$ and $\leq 1$ otherwise.
Moreover, $\xi\in H^0(l,T_{\P^n}(-\log H_0)|_l)$, so we can write, for some holomorphic function $g$,
\[-x_0^2\xi_1\left(\frac{1}{x_0}\right)=x_0g(x_0).\]It follows that $\xi_1$ also has degree $\leq 1$.

There is a natural map $\psi:l\to H_0$ defined by 
\[\psi(z)=[0:\xi_1(z):\cdots:\xi_n(z)].\] 
By lemma \ref{trivial_quotient}, a global section of $T_{\P^n}(-\log D)$ comes from the trivial factors of the decomposition, so vanishes nowhere. Hence $\psi$ is well-defined.
By definition, $\xi$ is tangent to every component of $D$; this directly implies that $\psi$ sends each $H_j$ into itself.

Conversely, assume that we are given a degree 1 map $\psi:l\to H_0$ as above. Over $U_0=\P^n\setminus H_0$, write $\psi=[0:\xi_1:\cdots:\xi_n]$, and choose representatives $\xi_j$, which are degree 1 polynomials in the variable $z_1$. Then we can define a non-trivial logarithmic vector field by \[\xi=\sum_{j=1}^n\xi_j(z_1)\frac{\dr}{\dr z_j}\in H^0(l\setminus H_0,T_{\P^n}(-\log H_0)|_l).\]
By construction, one has $\xi(l\cap H_j)\in T_{l\cap H_j}H_j$ for all $j$, hence \[\xi \in H^0(l\setminus H_0,T_{\P^n}(-\log D)|_l).\]
\end{proof}

\clearpage
More concretely, given a global logarithmic vector field $\xi\in H^0(l,T_{\P^n}(-\log D)|_l)$, we construct the map $\psi:l\to H_0$ in the following way (see figures \ref{fig1} and \ref{fig:p3} for illustrations of the 2- and 3-dimensional cases): starting from a point $q$ in $l$, we have a tangent vector to $l$ at $x$, which uniquely defines a line $L_x$ through $x$. Denote by $\psi(x)$ the intersection point of $L_x$ with $H_0$. By definition, the sections of $T_{\P^n}(-\log D)|_l$ are those vector fields tangent to all $H_i$'s. Hence, if $x\in H_i\cap l$, the point $\psi(x)$ is also in $H_i$.
\vspace{3\baselineskip}
\begin{figure}[h!]
\centering

    \includegraphics[width=9cm]{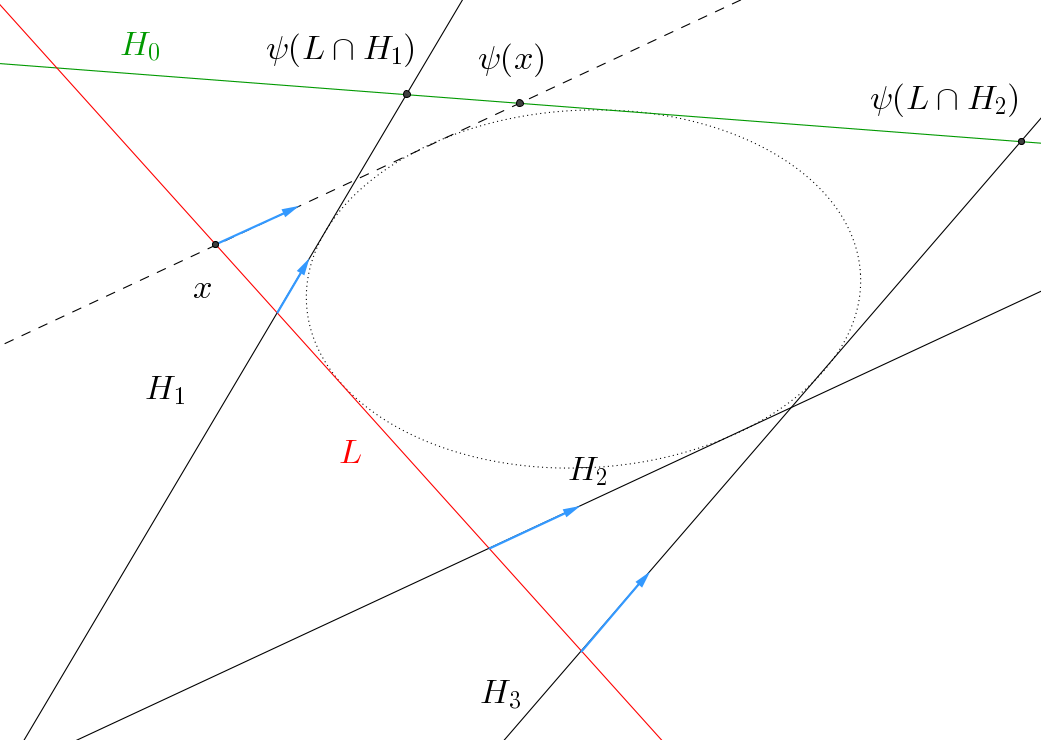}
    \caption{Lines in $\P^2$}
    \label{fig1}
    \end{figure}
\begin{figure}[h!]
\centering
    \includegraphics[width=12cm]{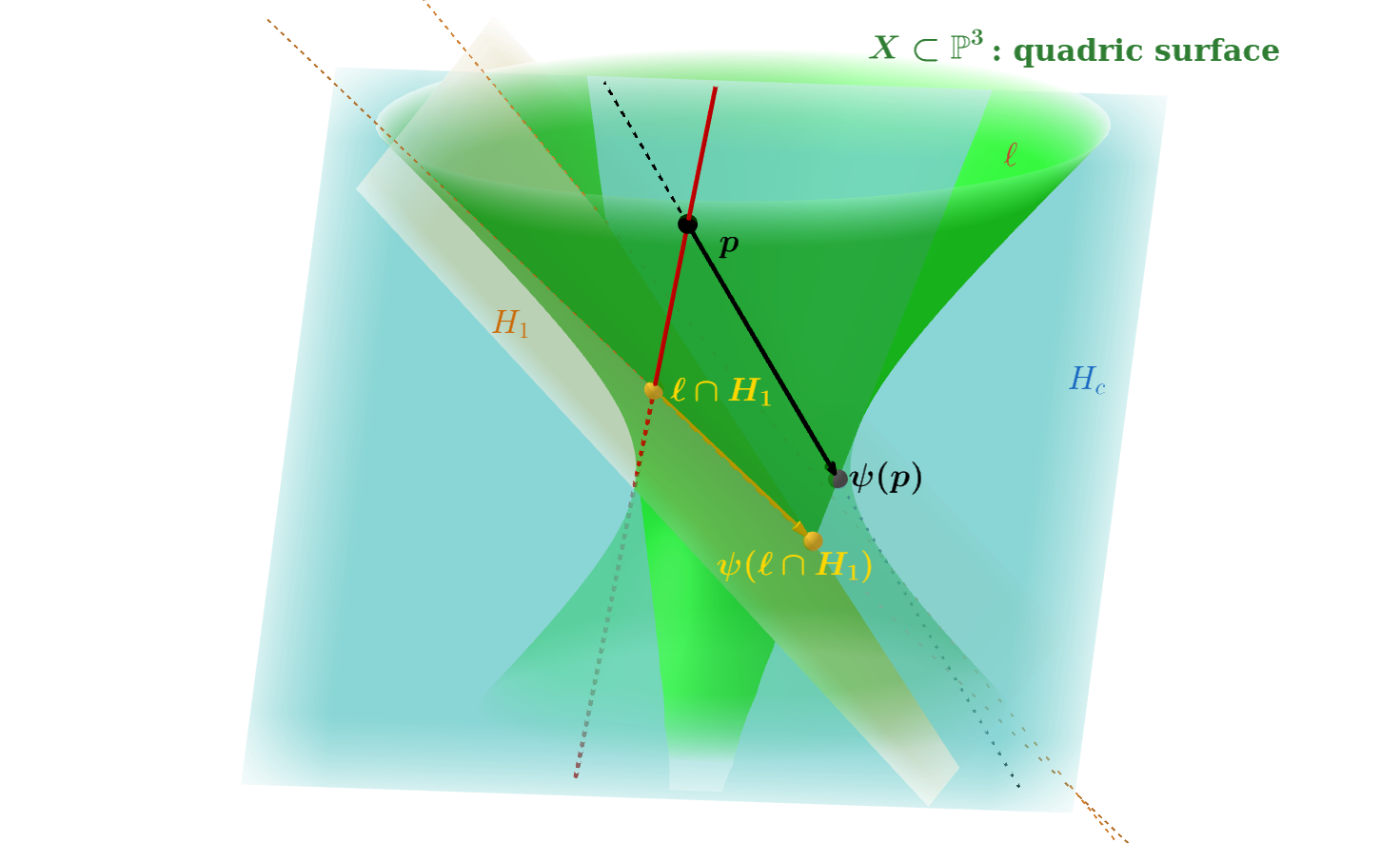}
    \caption{Construction for planes in $\P^3$}
    \label{fig:p3}

\end{figure}

Let $p_0,\dots,p_{n+k}$ be the points of $(\P^n)^\ast$ corresponding dually to the hyperplanes $H_0,\dots,H_{n+k}$ and let $W=l^\ast\subset (\P^n)^\ast$ be the $(n-2)$-plane of hyperplanes in $\P^n$ containing $l$. Assume that there exists a quadric hypersurface $X\subset (\P^n)^\ast$ containing $W,p_0,\dots,p_{n+k}$.

The inclusion $W\cup\{p_0,\dots,p_c\}\subset X$ means that the line $l$ and all the hyperplanes $H_i$ are tangent to the dual surface $X^\ast$ at some points.

The next proposition provides a necessary and sufficient condition for the existence of such a quadric hypersurface.
\begin{Propperso}
\label{quadric}

Consider $c+1$ points $p,p_1,\dots,p_c$ in general position in $\P^n$, and a $(n-2)$-plane $W$ disjoint from $\{p_0,p_1,\dots,p_c\}$. For $i=1,\dots,c$, denote by $L_i=\langle W,p_i\rangle$ the hyperplane generated by $W$ and $p_i$. The following two assertions are equivalent.
\begin{enumerate}
\item There exists a quadric hypersurface scroll $X\subset\P^n$ containing $W$ and all the points $p_i$.
\item There exists a degree 1 map $\psi:\P^1\cong  W^\ast \to \P^{n-1}\cong p_0^\ast$ such that $\psi(L_i)$ contains the point $p_i$ and $\psi(L_0)\neq L_0.$
\end{enumerate}
\end{Propperso}

\begin{proof}
\normalsize

\begin{enumerate}

\item Consider a degree 1 map $\psi$ as described in the statement. 
Then $\psi$ sends $W^\ast\cong\P^1$ to a line in $\P^{n-1}\cong p_0^\ast$.

The intersection $W'=\bigcap_{L\in W^\ast}\psi(L)$ is therefore another $(n-2)$-plane.

Define a subset of $\P^n$ by \[X(\psi)=\bigcup_{L\in W^\ast}L\cap\psi(L).\] It contains $W$ as well as the points $p_i$. Indeed, for any $q\in W\setminus W'$, there exists a unique hyperplane $\psi(L)$ containing both $W'$ and $q$. It follows that $q\in L\cap\psi(L)\subset X(\psi)$. Moreover, by definition of $\psi$, we know that $p_i\in L_i\cap\psi(L_i)$ for all $i$, so that $p_i\in X(\psi)$.

By construction, $X(\psi)$ is an $(n-1)$-dimensional scroll of degree 2 in $\P^n$, which contains $W\cup\{p_0,\dots,p_c\}$.

\item Assume the existence of the quadric $X$. We will show that $X$ is necessarily of the form $X(\psi)$ for some map $\psi$.

The set $W^\ast$ of hyperplanes containing $W$ is a projective line $\P^1$. Let $L_{\infty}\in W^\ast $ be the tangent hyperplane to $X$ at any point of $W$. Since $p_0\in X\setminus W$, $p_0\not\in L_\infty$.

For any $L\in W^\ast $, the set $L\cap X$ is a quadric hypersurface inside $L$, which contains $W$. Hence there exists an $(n-2)$-plane $W(L)$ such that $L\cap X=W\cup W(L)$. One has $W(L)\neq W$ except for $L=L_\infty$. Moreover, $p_0\in W(L)$ only if $L=L_0$.

We define a map $\psi:W^\ast\to p_0^\ast$ by setting $\psi(L)=<W(L),p_0>$ for $L\neq L_0$, and $\psi(L_0)=\mathbb{T}_{p_0}X$ (embedded tangent hyperplane to $X$ at $p_0$). Note that if since $p_i\in X\setminus W$, $p_i\in W(L_i)$. Then $\psi$ has the desired properties, with $\psi(L_\infty)=L_0$.

Moreover, for any hyperplane $L$, one has $Z(L)=L\cap \psi(L)$, and $X$ is described as the union 
\[\bigcup_{L\in W^\ast}L\cap \psi(L).\]

\end{enumerate}
\end{proof}

\begin{rk}
    We can also understand this construction in the original space $\P^n$.

    Consider a ruled quadric surface $X\cong\P^1\times\P^1$ containing the line $l$ and tangent to all hyperplanes $H_i$.

A point $p\in l$ belongs to two lines of $X$, one in each ruling. Let $l(p)\subset X$ be the other line through $p$. The map $\psi$ sends $p$ to the intersection point of $l(p)$ with $H_0$.
\end{rk}
\subsection{Almost ampleness}
\label{subsection:almost_ampleness}
Let $(X,D)$ be a smooth logarithmic pair. Whereas being ample modulo $D$ only involves the image of $\B_+(\O_{\P(\clog{X}{D})}(1))$ in $X$, the stronger notion of almost ampleness reads on the augmented base locus itself.
In general, this last property is strictly stronger. However, in the particular case of hyperplane arrangements, both are equivalent.
\begin{Theoperso}
Let $D=\sum_{i=0}^{n+k}D_i$ be a arrangement of hyperplanes in general position in $\P^n$. Then $\clog{\P^n}{D}$ is ample modulo $D$ if, and only if, it is almost ample.
\end{Theoperso}

\begin{proof}
\normalsize
We only need to treat one direction. Assume that the logarithmic cotangent bundle associated to $D$ is ample modulo $D$. Then $\B_+(\O_{\P(\clog{X}{D})}(1))\subset \pi^{-1}(D)$.

We will study more carefully the fibres of $\Phi$ above the strata of $D$. According to lemma \ref{fibres2}, their union forms the augmented base locus $\B_+(\O_{\P(\clog{X}{D})}(1))$. 

We need to show that $\Phi$ restricted to $\pi^{-1}(D)\setminus \bigcup_{I;|I|<n}\widetilde{D}_I$ is injective.

Let $I\subset\{0,\dots,n+k\}$ be a subset of indices of cardinality $<n$. Up to a linear coordinate change, we can assume that $I=\{1,\dots,r\}$ with $r\leq n$.

Again, let us work on the open subset $U=(Z_0\neq0)\cap\bigcap_{1\leq j\leq k}(\ell_j\neq0)$ with standard affine coordinates $(z_1,\dots,z_n)$. In these coordinates, $D_I=\{(0,\dots,0,z_{r+1},\dots,z_n)\}\cong\P^{n-r-1}$.
Moreover, the restricted residue map $\Res:\clog{\P^n}{D}|_{D_I\cap U}\to\mc{O}_{D_I}^{\oplus r}$ is given by the expression
\[\Res|_{D_I\cap U}\left(\sum_{i=1}^n\eta_i\frac{dz_i}{z_i}\right)=(\eta_1,\dots,\eta_r).\]
Under the above trivialization, $\widetilde{D}_I$ is described as \[\widetilde{D}_I\cap\pi^{-1}(U)\cong\left\{\begin{array}{c|c}&z_i=0~\forall ~1\leq i\leq r\\(z,[\xi_1:\cdots:\xi_n])\in U\times\P^{n-1}&\mbox{and }\xi_i=0\\&\forall i\geq r+1\end{array}\right\}.\]

Remark first that for any $J\subset\{0,\dots,n+k\}$, $\pi^{-1}(D_I)\cap\widetilde{D}_J=\emptyset$ unless $J\subset I$. This can be easily checked in coordinates.

For all $(z,[\xi])\in\pi^{-1}(D_I)$, we have 
\begin{equation}
\label{expr_phi}\tag{$\blacklozenge$}
\Phi(z,[\xi])=\left[\xi_1:\cdots:\xi_n:\frac{\sum_{j={\color{red} r+1}}^na_j^1\xi_jz_j}{a_0^1+\sum_{j={\color{red} r+1}}^na_j^1z_j}:\cdots:\frac{\sum_{j={\color{red} r+1}}^na_j^k\xi_jz_j}{a_0^k+\sum_{j={\color{red} r+1}}^na_j^kz_j}\right].
\end{equation}

Note that for $J\subset I$ and $(z,[\xi]) \in \pi^{-1}(D_I)\cap\widetilde{D}_J$, one has \[\Phi(z,[\xi])=[\un_J(1)\xi_1:\cdots:\un_J(r)\xi_r:0:\cdots:0].\] 
In particular, this image is independent of the $z$-coordinates and $\Phi^{-1}(\Phi(z,[\xi]))\cong D_J$.

The restriction $\Phi|_{\pi^{-1}(D_I)}$ is injective out of $\bigcup_{J\subset I}\widetilde{D}_J$ if and only if the corresponding map 
    \[\Phi^\dag:\C^{n-r}\times(\C^n\setminus(\C^r\times\{0\}^{n-r}))\longrightarrow\C^r\times\C^{n-r}\times\C^{k}\]
is so. 

Since the first coordinate map of $\Phi^\dag$ is given by the identity $\C^r\to\C^r$, this is equivalent to the injectivity of 
\begin{align*}
\label{phidag}\tag{$\spadesuit$}
\Phi^\ddag :\C^{n-r}\times(\C^{n-r}\setminus\{0\})&\longrightarrow\C^{n-r+k}\\
(z,\xi)&\longmapsto\left(\xi_{r+1},\dots,\xi_n,\frac{\sum_{j=r+1}^na_j^1\xi_jz_j}{a_0^1+\sum_{j=r+1}^na_j^1z_j},\dots,\frac{\sum_{j=r+1}^na_j^k\xi_jz_j}{a_0^k+\sum_{j=r+1}^na_j^kz_j}\right).
\end{align*}

We see that $\Phi|_{\pi^{-1}(D_I)}$ is injective out of $\bigcup_{J\subset I}\widetilde{D}_J$ whenever the above map $\Phi^\ddag:\C^{n-r}\times\C^{n-r}\to\C^{n-r+k}$ is injective out of the corresponding subset. We recognize in equation \ref{phidag} the expression for the morphism associated to the pair $(D_I,D(I^\complement)|_{D_I})$. Hence we are reduced to showing that this arrangement of $(n-r)+k+1$ hyperplanes in $D_I\cong \P^{n-r}$ has almost ample logarithmic cotangent bundle.

By lemma \ref{subarrangement} below, we know that this pair already has ample modulo boundary logarithmic cotangent bundle. 

The conclusion follows by induction on the dimension, provided that we prove the case $n=2$. 

For any $(z=(0,z_2),[\xi_1:\xi_2])\in\pi^{-1}(D_1\cap U_0)$, \ref{expr_phi} reads as
\[\Phi(z,[\xi])=\left[\xi_1:\xi_2:\frac{a^1_2\xi_2z_2}{a_0^1+a_2^1z_2}:\cdots\right].\]
A quick computation shows that this map is injective in restriction to \[\pi^{-1}(D_1\cap U_0)\setminus\widetilde{D}_1=\{((0,z_2),[\xi_1:1])\}.\]
\end{proof}
\begin{lemmeperso}
\label{subarrangement}
  Let $D=\sum_{0\leq i\leq n+k}D_i$ be an arrangement of $n+k+1$ hyperplanes in general position in \(\P^n\) satisfying Condition $\color{red}\bigstar$. Then so does the arrangement $D(I^{\complement})|_{D_I}$ in $D_I\cong \P^{n-|I|}$, for any subset of indices $I$ of cardinality $<n$.

\end{lemmeperso}
\begin{proof}
\normalsize
We assume for simplicity that $I=\{0,\dots,r\}$ and $D_i=(Z_i=0)$ for $i=0,\dots,r$.

First, it is clear that the arrangement $D(I^{\complement})|_{D_I}$ is in general position (\emph{i.e.} has normal crossings singularities).

Assume that the hyperplanes $D_j\cap D_I$, $i=r+1,\cdots,n+k$ do not satisfy Condition $\color{red}\bigstar$. Then one can find a symmetric matrix $Q'$ of size $n-r$ and rank less than $4$ such that
\[A_I^\intercal Q' A_I=0,\]then the matrix \[Q=\left(\begin{array}{c|c}0_{r+1}&0\\ \hline 0&Q'\end{array}\right)\] satisfies \[A^\intercal Q A=0\]and Condition $\color{red}\bigstar$ does not hold for the hyperplanes $D_i$'s in $\P^n$.

\end{proof}

\subsection{Description of the augmented base locus in terms of the number of hyperplanes}
\label{subsection:bplus}
The proof of theorem \ref{ampleness} actually provides a complete description of the image  
\[p(\B_+(\O_{\P(\clog{\P^n}{D})}(1)))\]
of the augmented base locus of the logarithmic cotangent bundle $\clog{\P^n}{D}$. 
\begin{Theoperso}
    The projection
    \[p\left(\B_+(\O_{\P(\clog{\P^n}{D})}(1))\right)\]is the union of all ruled quadric surfaces which are simultaneously tangent to all hyperplanes $H_i$.
    Hence, for $4n-3\geq c\geq3n$ hyperplanes, this locus has dimension $4n-1-c$, and
     \[p\left(\B_+(\O_{\P(\clog{\P^n}{D})}(1))\right)=\P^n\]
     if $c\leq 3n-1$.
\end{Theoperso}
\begin{proof}
\normalsize
\begin{itemize}[label=$\bullet$]
\item We know indeed that the locus $p(\B_+(\O_{\P(\clog{\P^n}{D})}(1)))$ is a union of lines. According to the construction of the previous section, the $(n-2)-$plane, dual of any such line, is contained in some quadric scroll in $\P^{n\ast}$, which also contains the hyperplanes $H_i$'s as points of the dual projective space. Conversely, for any quadric scroll $X$ containing the $H_i$'s in $\P^{n\ast}$, the line dual to any $\P^{n-2}$ of its rulings is a superjumping line. When $n\geq 3$, it is a line of the quadric surface dual to $X$. 

Consequently, the image of $\B_+(\O_{\clog{\P^n}{D}}(1)$ in $\P^n$ is made of $\supp D$ and all the quadric surfaces that are simultaneously tangent to all the components $H_i$.

\item Recall that the dimension of the space of quadrics of rank $\leq4$ in $\P^{n\ast}$ equals $4n-3$. In the limit case $c=4n-3$, for a general choice of hyperplanes (here, the term \emph{general} is to be understood as \emph{imposing linearly independent conditions on quadrics}), there exists a unique quadric $X$ of rank exactly 4 containing all of them in $\P^{n\ast}$. Thus $\B_+(\O_{\clog{\P^n}{D}}(1)$ projects onto $D\cup S$, where $S=X^\ast$ is a smooth quadric surface. 

Similarly, the variety of rank $\leq 4$ quadrics containing $c$ general points in $\P^n$ has dimension $4n-3-c$. We derive that for a general choice of $c$ hyperplanes, $p(\B_+(\O_{\P(\clog{\P^n}{D})}(1)))\setminus \supp D$ has dimension $2+4n-3-c=4n-1-c$. For $c<3n$, this augmented base locus is not any more dominant.

\item We deal separately with the 2-dimensional case, since we do not get surfaces. The superjumping lines showing up in the proof are the points of any conic in $\P^{2\ast}$ passing through all the points $H_i$. In the original plane $\P^2$, if there exists a conic $\mc{C}$ osculating the lines $H_i$, then any other tangent line is superjumping. We see that these lines shape exactly $\P^2=p(\B_+(\O_{\P(\clog{\P^2}{D})}(1)))$.

\item For $n\ge3$, denote by $\Sigma$ the closure
\[\overline{p(\B_+(\O_{\P(\clog{\P^n}{D})}(1)))\setminus \supp D}\]
of the complement of $\supp D$ in the projection of the augmented base locus of the logarithmic cotangent bundle.
We have seen that $\Sigma$ consists of a union of lines. In addition, the superjumping lines were identified, through the construction of the previous section, as the lines of all quadric surfaces $S$ contained in some $\P^3\subset \P^n$, which are simultaneously tangent to all components of $D$. 

It is known that any irreducible quadric surface is ruled (it isomorphic to $\P^1\times\P^1$). Any surface $S$ as above is thus entirely contained in $\Sigma$.
We have ultimately described the locus $\Sigma$ as a union of quadric surfaces.

\item To be more precise, we will determine the dimension of the family of such surfaces $S$, depending on the number of components of $D$.
Recall from the previous sections that such a surface $S$ is the dual variety of a quadric hypersurface in $\P^{n\ast}$ of rank $\leq4$ containing all the points $p_i=H_i^\ast$.

First, let us compute the dimension of the variety $\Xi$ of quadric hypersurfaces of rank $\leq4$ in $\P^n$.
To do so, we use the characterization of quadrics of rank at most 4 as quadric hypersurfaces containing an $(n-2)$-plane.
The variety $\Xi$ can then be represented as the image under the second projection of the incidence variety 
\[\mathscr{I}=\{(Z,X);Z\subset X\}\subset\G(n-2,n)\times\P^{\binom{n+2}{2}-1}.\]

Fix an $(n-2)$-plane $Z\subset\P^n$. What can be the dimension of the fibre $\pi_1^{-1}(Z)$ of the first projection above $Z$? Observe that this fibre is actually the kernel of the linear map
\[\C^{\binom{n+2}{2}}\cong H^0(\P^n,\O_{\P^n}(2))\twoheadrightarrow H^0(Z,\O_{Z}(2))\cong\C^{\binom{n}{2}}.\]
Hence $\pi_1^{-1}(Z)$ has dimension $\binom{n+2}{2}-\binom{n}{2}-1=2n$.

In the end, we find $\dim\mathscr{I}=2n+\dim\G(n-2,n)=2n+2(n-1)=4n-2$. Since the fibre above a general point of $\Xi$ has dimension 1, we compute $\dim\Xi=4n-3$.

We see now that for a general choice of $c\geq 4n-2$ hyperplanes in $\P^n$, the image $p(\B_+(\O_{\P(\clog{\P^n}{D})}(1)))$ of the augmented base locus is empty, since $4n-2$ general points can not belong to a same quadric of rank $\leq4$.
If $D$ has $c\leq 4n-3$ general components, then the locus $\Sigma$ is swept out by a $(4n-3-c)$-dimensional family of quadric surfaces, and thus has dimension $4n-3-c+2=4n-1-c$ for $c\geq3n$. When we have strictly less than $3n$ components, $\Sigma$ is the whole space $\P^n$.
\end{itemize}
\end{proof}

When $c\leq 3n-1$, $\P^n$ is entirely covered by superjumping lines. However, the augmented base locus $\B_+(\mc{O}_{\P(\clog{\P^n}{D})}(1))$ upstairs can still be a proper subset of $\P(\clog{\P^n}{D})$. Is it possible to describe it more precisely?

Assume that $2n\leq c\leq 3n-1$. The image $p(\B_+(\mc{O}(1))$ is swept out by a $(4n-3-c)$-dimensional family of $\P^1\times\P^1$'s tangent to all the hyperplanes. Therefore, the set of superjumping lines is a subvariety of $\G(1,n)\cong\P^{2n-2}$ of dimension $4n-2-c$. We remark that for $c\leq 2n$, all lines in $\P^n$ are superjumping.
\begin{Theoperso}
    Assume that we have $n+2\leq c\leq 3n-1$ hyperplanes $H_i$ in general position in $\P^n$. Then the augmented base locus 
    \[\B_+(\O_{\P(\clog{\P^n}{D})}(1))\subset\P(\clog{\P^n}{D})\]
    has dimension $\leq4n-1-c$.
\end{Theoperso}
\begin{proof}
\normalsize
We first show that $\B_+(\O_{\P(\clog{\P^n}{D})}(1))$ equals exactly the union over all superjumping lines $l$ of the curves $\P(\O_l)$ induced by trivial quotients of $\clog{\P^n}{D}$.

Write $c=n+k+1$ with $k\ge1$ and let the notation be as in subsection \ref{subsection:hyperplane}. We work in the standard affine coordinates associated to the open subset $\P^n\setminus H_0=\{Z_0\neq 0\}$. 

Consider an element \[\left(x,\xi=\sum_{i=1}^n\xi_i\frac{\dr}{\dr z_i}\right)\in\B_+(\O_{\P(\clog{\P^n}{D})}(1))\setminus p^{-1}(D)\] and denote for all $i=1,\dots,n$ and $j=1,\dots,k$, \[V_i=\xi_i, V_j=\frac{\sum_{i=1}^na_i^jx_i\xi_i}{\sum_{i=0}^na_i^jx_i},\] so that $\Phi(x,\xi)=[V_1:\cdots:V_{n+k}]$. Then the fibre $Z=\Phi^{-1}(\Phi(x,\xi))$ is positive-dimensional and $x$ belongs to some superjumping line $l\subset p(Z)$. The inclusion $l\subset p(Z)$ shows that any $z=(z_1,\dots,z_n)\in l\setminus H_0$ satisfies the equations 
\[\sum_{i=0}^na_i^jz_i(V_{n+j}-V_i)=0.\]
One can extend the vector field 
\[\xi(z)=\sum_{i=1}^n\frac{\xi_i}{x_i}z_i\frac{\dr}{\dr z_i}\]
defined on $l\setminus H_0$ as a never-vanishing holomorphic global section of $T_{\P^n}(-\log D)|_l$. In other words, there is a trivial quotient $\O_l$ of $\clog{\P^n}{D}|_l$ such that $(x,\xi)\in\P(\O_l)$. 

In order to compute the dimension of $\B_+(\O_{\P(\clog{\P^n}{D})}(1))$, we will `count' all such curves $\P(\O_l)$.

Above each surface $\P^1\times\P^1$ composing $p(\B_+(\O_{\P(\clog{\P^n}{D})}(1)))$ lives a $1$-parameter family of $\P(\O_l)$ inside $\B_+(\O_{\P(\clog{\P^n}{D})}(1))$. Therefore, the augmented base locus has dimension at most $4n-2-c+1=4n-1-c$.
    
\end{proof}
If $c\leq 2n$, the morphism $\Phi:\P(\clog{\P^n}{D})\to\P^{c-2}$ cannot be generically finite, since $c-2\leq 2n-2<2n-1=\dim \P(\clog{\P^n}{D})$. We deduce
\begin{Theoperso}
    The logarithmic cotangent bundle associated to $c$ general hyperplanes is big if and only if $c\geq 2n+1$. 
\end{Theoperso}

\begin{rk}
    Note that $c=2n+1$ is also the bound for Brody-hyperbolicity of the complement $\P^n\setminus D$.
\end{rk}

\section{Orbifolds}
\label{section:orbifolds}
\subsection{Campana's orbifold category}
In what follows, the term \emph{orbifold} will exclusively correspond to geometric orbifold pairs as defined by Campana. We refer for the basic notions concerning orbifolds to the papers \cite{Cam11}, \cite{CP15} or \cite{CDR20}, for instance.

\begin{Def}
A smooth orbifold pair is a pair $(X,\Delta)$ where $X$ is a smooth projective variety and $\Delta$ is an effective $\Q$-divisor with coefficients in $[0,1]$ and simple normal crossings support. In analogy with ramification divisors, we will write it under the form $\Delta=\sum_{i\in I}(1-\frac{1}{m_i})D_i$ where the $D_i$ are prime irreducible divisors and $m_i\in\Q_{\geq 1}\cup\{\infty\}$. 

\end{Def}
In this work, we will only consider integer multiplicities $m_i$.
When the $m_i$ are all infinite (resp. equal to 1), we recover the classical logarithmic (resp. compact) case. One can then naturally regard orbifolds as an interpolation between the compact and the logarithmic cases.

As for the logarithmic case, we would like to define \emph{orbifold differential forms} as meromorphic differential forms over $X$ having singularities of order ``at most $1-1/m_i$ along each $D_i$''; formally, in some adapted local coordinates, the vector bundle $\Omega_{(X,\Delta)}$ of orbifold 1-forms should be generated over $\mc{O}_X$ by the forms $\frac{dz_i}{z_i^{1-1/m_i}}$.
Even though this is not directly possible, one can define these bundles through appropriate ramified coverings turning $\Delta$ into an integral divisor.
\begin{Def}
Let $Y$ be a smooth projective variety. A Galois covering $\pi:Y\to X$ is \emph{adapted} to the orbifold pair $(X,\Delta)$ if it satisfies the following conditions:
\begin{enumerate}
    \item for any irreducible component $D_i$, we have $\pi^*D_i=p_i\widetilde{D_i}$, where $p_i$ is a multiple of $m_i$ and $\widetilde{D_i}$ has at most simple normal crossings;
    \item both the supports of $\pi^*\Delta +\mathrm{Ram}(\pi)$ and the branch locus of $\pi$ have at most normal crossings.
\end{enumerate}
In addition, $\pi$ is said strictly adapted to $(X,\Delta)$ if $p_i=m_i$ for all $i$.
\end{Def}
For a smooth orbifold pair $(X,\Delta)$, there always exists an adapted covering (see Prop. 4.1.12 in \cite{Laz04-1}).

Let \(\pi:Y\to (X,\Delta)\) be an adapted covering. For any point \(y\in Y\), there exists an open neighbourhood $U\ni y$, invariant under the action of the isotropy group of \(y\) in \(\Aut(\pi)\). Hence, there exist local coordinates \(w_{i}\) on $U$ centred at $y$ such that \(\pi(U)\) has coordinates \(z_{i}\) centred at \(\pi(y)\) satisfying $|\Delta|\cap\pi(U)\subset\{z_1\dots z_n=0\}$ and 
\[
  \pi(w_{1},\dots,w_{n})
  =
  (z_{1}^{p_{1}},
  \dots,
  z_{n}^{p_{n}}),
\]
where \(p_{i}\) is an integer multiple of the multiplicity \(m_i\) in $\Delta$ of \((z_{i}=0)\).

If all multiplicities are infinite (\(\Delta=\lceil\Delta\rceil\)), for any adapted covering \(\pi\colon Y\to X\), we denote
\[
  \Omega(\pi,\Delta):=
  \pi^{\ast}\clog{X}{\Delta}.
\]
The orbifold cotangent bundle associated with $\pi$ is then defined as the locally free subsheaf $\Omega(\pi,\Delta)$ of $\Omega(\pi,\lceil\Delta\rceil)$ fitting in the short exact sequence
\begin{equation}\label{exact_orbifolds}0\rightarrow\Omega(\pi,\Delta)\rightarrow\Omega(\pi,\lceil\Delta\rceil)\rightarrow\bigoplus_{\begin{array}{c}i\in I\\m_i<\infty\end{array}}\mc{O}_{\pi^*D_i/m_i}\rightarrow0.\end{equation}

Here the quotient is the composition of the pullback of the residue map
\[
  \pi^*\Res : \pi^*\clog{X}{\lceil\Delta\rceil}\to \bigoplus_{\begin{array}{c}i\in I\\m_i<\infty\end{array}}
  \O_{\pi^*D_{i}}
\]
with the quotients
\(
\O_{\pi^*D_{i}}
\twoheadrightarrow
\O_{\pi^*D_{i}/m_{i}}
\).

The bundle $\Omega(\pi,\Delta)$ is locally generated in coordinates as above by the elements
\begin{equation}
\label{eq:orbifold_forms}\tag{$\maltese$}
  w_{i}^{\frac{p_{i}}{m_{i}}}
  \pi^*(d z_{i}/z_{i})
  =
  w_{i}^{-p_{i}(1-\frac{1}{m_{i}})}
  \pi^*(d z_{i}).
\end{equation}

We are now able to construct orbifold differential forms on $X$.
\begin{Def}
Given an adapted covering $\pi:Y\to(X,\Delta)$, the sheaf of orbifold symmetric differential forms of order $q$ is the direct image \[S^{[q]}\Omega_{(X,\Delta)}:=\pi_*\left((S^q\Omega(\pi,\Delta))^{\mathrm{Aut}(\pi)}\right)\subseteq S^q\clog{X}{\lceil\Delta\rceil}.\]
\end{Def}
One says that $\Omega_{(X,\Delta)}$ is big if $\Omega(\pi,\Delta)$ is a big vector bundle over $Y$ for some adapted covering $\pi:Y\to X$. This is equivalent to saying that for some (or any) ample line bundle over $X$, there exits an integer $N$ such that $H^0(X,S^{[N]}\Omega_{(X,\Delta)}\otimes A^{-1})\neq \{0\}$.
By definition, the \emph{augmented base locus} of $\Omega_{(X,\Delta)}$ is \[\mathbb{B}_+\left(\Omega_{(X,\Delta)}\right)=\bigcap_{N\geq 1}\bigcap_{p/q\in\Q}\bs\left(S^{[Nq]}\Omega_{(X,\Delta)}\otimes A^{-Np}\right)\]for some ample line bundle $A$ over $X$. Away from $\lceil\Delta\rceil$, this set turns out to be independent of the covering $\pi$.
\begin{Prop}[\cite{DR20}]
Over $X\backslash\lceil\Delta\rceil$, the image of the augmented base locus $\mathbb{B}_+\left(\mc{O}_{Y'}(1)\right)$ by the natural projection coincides with the orbifold augmented base locus $\mathbb{B}_+(\Omega_{(X,\Delta)}).$
\end{Prop}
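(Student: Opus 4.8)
The statement to be proved is the orbifold analogue of Proposition~\ref{fibres2}: away from $|\Delta|$, the image under the natural projection of the augmented base locus $\mathbb{B}_+(\mc{O}_{Y'}(1))$ coincides with the orbifold augmented base locus $\mathbb{B}_+(\Omega_{(X,\Delta)})$. Here $Y'$ should denote the projectivisation $\P(\Omega(\pi,\Delta))$ of the orbifold cotangent bundle associated to some adapted covering $\pi:Y\to X$, and the main point is that this image does not in fact depend on the choice of $\pi$. The plan is to transfer the question upstairs to $Y$, where the orbifold cotangent bundle becomes an honest locally free sheaf, reduce to the already-understood behaviour of augmented base loci of vector bundles, and then descend the resulting locus to $X$, carefully checking that the covering group action matches base loci on both sides.

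\textbf{Step 1: reduce to sections upstairs.} By definition, $\mathbb{B}_+(\Omega_{(X,\Delta)})$ is cut out by the base loci of the sheaves $S^{[Nq]}\Omega_{(X,\Delta)}\otimes A^{-Np}$, and by construction $S^{[Nq]}\Omega_{(X,\Delta)} = \pi_*\big((S^{Nq}\Omega(\pi,\Delta))^{\Aut(\pi)}\big)$. First I would use the projection formula together with the fact that taking $\Aut(\pi)$-invariants commutes with global sections (since $\pi$ is finite and Galois, $H^0(X,\pi_*\mathscr{F}^{\Aut(\pi)}) = H^0(Y,\mathscr{F})^{\Aut(\pi)}$) to identify, for a suitably chosen ample $A$ on $X$ with $\pi^*A$ ample on $Y$, the relevant section spaces with $\Aut(\pi)$-invariant sections of $S^{Nq}\Omega(\pi,\Delta)\otimes \pi^*A^{-Np}$ over $Y$. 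Since an invariant section vanishes at a point $y$ iff it vanishes along the whole orbit $\Aut(\pi)\cdot y$, and since $\Omega(\pi,\Delta)$ is a genuine vector bundle on $Y$, the base locus of the collection of all such invariant twisted powers — intersected over all $N$ and all $p/q$ — is exactly $\pi^{-1}$ of the base locus computed with \emph{all} sections, at least away from the ramification locus where the orbit structure is generically free. This should give $\pi^{-1}\big(\mathbb{B}_+(\Omega_{(X,\Delta)})\big) = \mathbb{B}_+(\Omega(\pi,\Delta))$ over $Y\setminus\pi^{-1}(|\Delta|)$.

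\textbf{Step 2: interpret the augmented base locus of the bundle $\Omega(\pi,\Delta)$.} On $Y$, $\Omega(\pi,\Delta)$ is locally free, so $\mathbb{B}_+(\Omega(\pi,\Delta))$ in the usual sense equals $p'\big(\mathbb{B}_+(\mc{O}_{\P(\Omega(\pi,\Delta))}(1))\big) = p'\big(\mathbb{B}_+(\mc{O}_{Y'}(1))\big)$, where $p':Y' = \P(\Omega(\pi,\Delta))\to Y$ is the projection — this is the standard translation between symmetric-power base loci of a vector bundle and the augmented base locus of the tautological line bundle on its projectivisation, via $H^0(Y, S^m E\otimes L) = H^0(\P(E),\mc{O}(m)\otimes p'^*L)$. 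Combining with Step~1 gives $\pi^{-1}\big(\mathbb{B}_+(\Omega_{(X,\Delta)})\big) = p'\big(\mathbb{B}_+(\mc{O}_{Y'}(1))\big)$ over the relevant open set. Finally I would push forward by $\pi$: since $\pi$ is surjective and finite, $\pi$ of the left-hand side is $\mathbb{B}_+(\Omega_{(X,\Delta)})$ itself, and the right-hand side pushed forward is the image under the natural projection $Y'\to X$ of $\mathbb{B}_+(\mc{O}_{Y'}(1))$, which is precisely the claim.

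\textbf{Main obstacle.} The delicate point is Step~1, specifically the interplay between taking $\Aut(\pi)$-invariants and computing base loci near the ramification: at a ramification point the isotropy group is nontrivial, so an invariant section can be forced to vanish there even when non-invariant sections do not, which is exactly why one must restrict to $X\setminus|\Delta|$ (equivalently $Y\setminus\pi^{-1}(|\Delta|)$, where $\pi$ is étale and the action is free). I would handle this by working throughout over the open set where $\pi$ is étale, invoking that over a free quotient the equality $\bs(\pi_*\mathscr{F}^{\Aut(\pi)}) = \pi^{-1}(\dots)$-type statements are clean, and remarking that the independence of the covering follows because the final object $\mathbb{B}_+(\mc{O}_{Y'}(1))$ projected to $X$ is shown equal to the intrinsic $\mathbb{B}_+(\Omega_{(X,\Delta)})$, which makes no reference to $\pi$. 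A secondary technical point is choosing the ample line bundles compatibly ($A$ on $X$ versus $\pi^*A$ on $Y$) and checking that the $\Q$-twisting by $A^{-Np}$ matches on both sides; this is routine once the section-space identification is in place.
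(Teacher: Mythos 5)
The paper does not actually prove this proposition: it is quoted from \cite{DR20} without an argument, so there is no internal proof to compare yours against, and I can only assess your plan on its own terms. Your Step 1 is sound in outline: over the locus where $\pi$ is étale, the identification $H^0(X,\pi_*\mathscr{F}^{\Aut(\pi)})=H^0(Y,\mathscr{F})^{\Aut(\pi)}$ together with the product trick $s\mapsto\prod_{g\in\Aut(\pi)}g^*s$ (harmless that it raises the degree, since $\mathbb{B}_+$ intersects over all twists) does show that base loci computed with invariant sections agree with those computed with all sections, giving $\pi^{-1}\bigl(\mathbb{B}_+(\Omega_{(X,\Delta)})\bigr)=\bigcap\bs\bigl(S^{Nq}\Omega(\pi,\Delta)\otimes\pi^*A^{-Np}\bigr)$ away from $\pi^{-1}(|\Delta|)$.

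The genuine gap is in Step 2. The correspondence $H^0(Y,S^mE\otimes L)=H^0(\P(E),\mc{O}(m)\otimes p'^*L)$ says that $y\in\bs(S^mE\otimes L)$ if and only if the \emph{whole fibre} $p'^{-1}(y)$ lies in $\bs(\mc{O}(m)\otimes p'^*L)$: a section whose value in $S^mE_y\otimes L_y$ is nonzero is a nonzero polynomial on the fibre, but it may still vanish at any prescribed point of that fibre. Consequently $\bigcap\bs(S^mE\otimes A^{-1})=\{y:p'^{-1}(y)\subseteq\mathbb{B}_+(\mc{O}_{\P(E)}(1))\}$, which is in general strictly smaller than $p'\bigl(\mathbb{B}_+(\mc{O}_{\P(E)}(1))\bigr)=\{y:p'^{-1}(y)\cap\mathbb{B}_+(\mc{O}_{\P(E)}(1))\neq\emptyset\}$. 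A concrete counterexample to the ``standard translation'' you invoke: for $E=\mc{O}\oplus\mc{O}(1)$ on $\P^1$ one has $\bs(S^mE\otimes\mc{O}(-1))=\emptyset$ for all $m\geq1$, while $\mathbb{B}_+(\mc{O}_{\P(E)}(1))$ is the section with $\mc{O}(1)$-degree zero, whose projection is all of $\P^1$. So the equality you assert is not formal for vector bundles; it is essentially the whole content of the proposition, and proving it must use specific features of $\Omega(\pi,\Delta)$ (compare the paper's own hyperplane analysis, where $\mathbb{B}_+(\mc{O}(1))$ away from $D$ consists of single points $\P(\mc{O}_l)$ in each fibre over a superjumping line, precisely the situation where fibre-wise and point-wise vanishing diverge; the proof of Theorem 5.5 handles this by explicitly constructing symmetric differentials vanishing on an ample divisor whose base locus contains the projected $\mathbb{B}_+$). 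Your plan never produces, for a point $y$ whose fibre merely meets $\mathbb{B}_+(\mc{O}_{Y'}(1))$, the vanishing at $y$ of all twisted orbifold symmetric differentials, and the secondary issue you defer (ample twist on $Y'$ versus $p'^*$ of an ample on $Y$) feeds into the same difficulty rather than being routine.
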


As for the logarithmic case, it is important to note that the orbifold cotangent bundle cannot be ample when the support of $\Delta$ is non-empty.
\begin{lemme}[see \cite{DR20}, Lemma 3.7]
    Let $(\P^n,\Delta)$ be a smooth orbifold pair. Then for any strictly adapted covering $\pi:Y\to\P^n$, the orbifold cotangent bundle $\Omega_{(\P^n,\Delta)}$ has negative quotients supported on each component of $\Delta$ with finite multiplicity and trivial quotients supported on each component with infinite multiplicity.
\end{lemme}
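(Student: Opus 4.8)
The plan is to exhibit along each component of $\lceil\Delta\rceil$ an explicit quotient of the orbifold cotangent bundle that obstructs ampleness, reading it off from the local structure of $\Omega(\pi,\Delta)$ in adapted coordinates together with the residue morphisms: a negative line‑bundle quotient for the finite‑multiplicity components, and a trivial one for the infinite‑multiplicity components.

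First I would fix a strictly adapted covering $\pi:Y\to\P^n$ and record the local picture. The bundle $\Omega(\pi,\Delta)$ is the locally free subsheaf of $\Omega(\pi,\lceil\Delta\rceil)=\pi^*\clog{\P^n}{\lceil\Delta\rceil}$ defined by the exact sequence above; in adapted coordinates $(w_1,\dots,w_n)$ over $(z_1,\dots,z_n)$ with $\lceil\Delta\rceil\subset(z_1\cdots z_n=0)$, it is generated along a finite‑multiplicity component $D_i=(z_i=0)$, where $\pi^*z_i=w_i^{m_i}$, by $w_i^{p_i/m_i}\pi^*(dz_i/z_i)=w_i\,\pi^*(dz_i/z_i)$ (using $p_i=m_i$) together with the pullbacks $\pi^*(dz_j)$ of the remaining coordinate differentials, while along an infinite‑multiplicity component it coincides with $\pi^*\clog{\P^n}{\lceil\Delta\rceil}$, hence stays logarithmic.

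For a component $D_i$ with $m_i=\infty$: since $\Omega(\pi,\Delta)$ is logarithmic along $\tilde D_i:=(\pi^*D_i)_{\mathrm{red}}$, it carries a residue surjection onto $\mc{O}_{\tilde D_i}$, and restricting to $\tilde D_i$ gives the trivial quotient $\Omega(\pi,\Delta)|_{\tilde D_i}\twoheadrightarrow\mc{O}_{\tilde D_i}$. For a component $D_i$ with $m_i<\infty$: I would compose the inclusion $\Omega(\pi,\Delta)\hookrightarrow\pi^*\clog{\P^n}{\lceil\Delta\rceil}$ with the pullback $\pi^*\Res_i:\pi^*\clog{\P^n}{\lceil\Delta\rceil}\to\mc{O}_{\pi^*D_i}=\mc{O}_{m_i\tilde D_i}$ of the residue along $D_i$. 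Since $\Res_i$ sends $dz_i/z_i$ to the constant section $1$ and kills the other local generators, the image of $\Omega(\pi,\Delta)$ is exactly the subsheaf $w_i\,\mc{O}_{m_i\tilde D_i}\cong\mc{O}_Y(-\tilde D_i)/\mc{O}_Y(-m_i\tilde D_i)$; composing further with the projection onto the graded quotient $\mc{O}_Y(-\tilde D_i)/\mc{O}_Y(-2\tilde D_i)=\mc{O}_{\tilde D_i}(-\tilde D_i)$ yields a surjection $\Omega(\pi,\Delta)\twoheadrightarrow\mc{O}_{\tilde D_i}(-\tilde D_i)$. As $\pi^*H_i=m_i\tilde D_i$ with $H_i$ a hyperplane, any curve $C\subset\tilde D_i$ not contracted by $\pi$ satisfies $\tilde D_i\cdot C=\tfrac1{m_i}H_i\cdot\pi_*C>0$, so $\mc{O}_{\tilde D_i}(-\tilde D_i)$ has negative degree on $C$: this is a negative quotient supported on $\tilde D_i$. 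In either case $\Omega(\pi,\Delta)|_{\tilde D_i}$ — hence $\Omega(\pi,\Delta)$ — cannot be ample, which establishes the claim and shows the orbifold cotangent bundle $\Omega_{(\P^n,\Delta)}$ is never ample when $\Delta\neq0$.

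The point needing care is the bookkeeping over the crossings $\tilde D_i\cap\tilde D_j$: one must check that the extra local generators there ($w_j\pi^*(dz_j/z_j)$ for another finite component, or $\pi^*(dz_j/z_j)$ for an infinite one) do not enlarge the image of $\pi^*\Res_i$, which holds because the residue along $D_i$ only detects the $dz_i/z_i$‑coefficient; so the image remains $w_i\mc{O}_{m_i\tilde D_i}$ both at the generic point of $\tilde D_i$ and over the crossings, and the quotient maps glue to honest morphisms of sheaves on $Y$. One also uses that $m_i\ge2$ for every genuine component of $\Delta$, so that $\mc{O}_{\tilde D_i}(-\tilde D_i)$ is a nonzero and genuinely negative quotient.
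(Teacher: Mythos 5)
Your argument is correct; note that the paper itself gives no proof of this lemma (it defers to \cite{DR20}, Lemma~3.7), and your construction of the quotients --- the residue onto $\mc{O}_{\tilde D_i}$ for infinite multiplicity, and for finite multiplicity the composite $\Omega(\pi,\Delta)\hookrightarrow\pi^*\clog{\P^n}{\lceil\Delta\rceil}\xrightarrow{\pi^*\Res_i}\mc{O}_{m_i\tilde D_i}$ whose image $w_i\mc{O}_{m_i\tilde D_i}\cong\mc{O}_Y(-\tilde D_i)/\mc{O}_Y(-m_i\tilde D_i)$ projects onto $\mc{O}_{\tilde D_i}(-\tilde D_i)$ --- is exactly the argument of the cited reference. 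The only cosmetic slips are that the components $D_i$ of a general smooth orbifold pair need not be hyperplanes (though on $\P^n$ every effective divisor is ample and $\pi$ is finite, so $\mc{O}_{\tilde D_i}(-\tilde D_i)$ is still negative on every curve of $\tilde D_i$), and that along another finite-multiplicity coordinate the local generator is $w_j\pi^*(dz_j/z_j)=m_j\,dw_j$ rather than $\pi^*(dz_j)$ --- neither affects the computation of the image of $\pi^*\Res_i$, which you correctly verify over the crossings as well.
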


This motivates the following definition.
\begin{Def}
    The orbifold cotangent bundle $\Omega_{(\P^n,\Delta)}$ is said \emph{ample modulo boundary} if $\mathbb{B}_+(\Omega_{(X,\Delta)})\subset\lceil\Delta\rceil$.
\end{Def}

\subsection{The Fermat cover for hyperplane arrangements}
Assume that the hyperplanes have equal orbifold multiplicities. In this particular case, we can construct a global strictly adapted covering as follows.

For any $i=0,\dots,n+k$, let $\ell_i$ be a linear form defining $H_i$. Without loss of generality, we may assume that $\ell_i(Z)=Z_i$ for $i_0,\dots,n$, so that $H_0,\dots,H_n$ are the coordinates hyperplanes, and we write \[\ell_{n+j}(Z)=\sum_{i=0}^na_{i}^jZ_i, j=1,\dots,k.\]In the projective space $\P^{n+k}$, one can identify $\P^n$ with the linear subspace \[\bigcap_{j=1}^{k}\left\{X_{n+j}=\sum_{i=1}^na_{i,j}X_i\right\}.\] 
Let $Y$ be the complete Fermat intersection \[\bigcap_{j=1}^{k}\left\{X_{n+j}^m=\sum_{i=1}^na_{i,j}X_i^m\right\}.\] This is an $n$-dimensional subvariety of $\P^{n+k}$. Then the holomorphic map 
\[ \pi\colon
\begin{array}{lrcl}
    &Y&\longrightarrow&\P^n\\
    &[X_0:\cdots:X_{n+k}]&\longmapsto&[X_0^m:\cdots:X_n^m]
\end{array}\]
realises $Y$ as a strictly adapted cover of the pair $(\P^n,\Delta)$, that is, a Galois covering ramifying exactly over the hyperplanes $H_i$ with ramification order $m$.
This is the \emph{Fermat cover} associated with the pair $(\P^n,\Delta)$.

\subsection{Positivity of the orbifold cotangent bundle}
In this context, we will prove the following analogue of \ref{main_thm}.
\begin{Theoperso}
\label{theo_orbifolds}
If the arrangement satisfies $\color{red}\bigstar$ and for all $i$, $m_i\geq 2n$, then the orbifold cotangent bundle $\Omega_{(\P^n,\Delta)}$ is ample modulo $|\Delta|$.

\end{Theoperso}

\begin{proof}
\normalsize
To begin with, increasing the multiplicities $m_i$ will always increase the ampleness of $\Omega(\P^n,\Delta)$ (see \cite{DR20}). Indeed, consider two orbifolds divisors with the same support \[\Delta=\sum_{i=0}^{n+k}(1-1/m_i)H_i,\Delta'=\sum_{i=0}^{n+k}(1-1/m_i')H_i\]such that for each $i$, one has $m_i\leq m_i'$. We can find a ramified covering $\pi:X\to \P^n$ which is adapted for both $\Delta$ and $\Delta'$. Namely, one has for any $i$, \[\pi^*D_i=p_im_iE_i=p_i'm_i'E_i.\]
Writing this in adapted local coordinates as in \ref{eq:orbifold_forms}, we see that $\Omega(\pi,\Delta)$ is a subsheaf of $\Omega(\pi,\Delta')$; similarly $S^{[q]}\Omega_{(\P^n,\Delta)}$ is a subsheaf of $S^{[q]}\Omega_{(\P^n,\Delta')}$. 
Therefore, \[\mathbb{B}_+(\Omega_{(X,\Delta')})\subset\mathbb{B}_+(\Omega_{(X,\Delta)}),\]and we can assume without loss of generality that all the $m_i$ are equal.

Recall that in subsection \ref{subsection:hyperplane}, we provided equations describing the image $W$ and the positive-dimensional fibres of the morphism $\Phi$ associated with $\clog{\P^n}{|\Delta|}$.

Denote by $p$ the projection $\P(\clog{\P^n}{|\Delta|})\to\P^n$. We will make use of equations \ref{equations_fibre} to construct global orbifold forms vanishing on an ample divisor, whose base locus is contained in \[p\left(\B_+(\mc{O}_{\P(\clog{\P^n}{D})}(1))\right).\]

The image $\Phi(p^{-1}(H_1))$ is described by the equations 
\begin{align*}
    \left\{\begin{array}{rcl}V_1&=&\xi_1\\&\vdots&\\V_n&=&\xi_n\\ (a_{0}^j+\sum_{i=2}^na_{i}^jz_i)V_{n+j}&=&\sum_{i=2}^na_{i}^jz_i\xi_i
    \end{array}\right.\Longleftrightarrow&
    \left\{\begin{array}{rcl}V_1&=&\xi_1\\&\vdots&\\V_n&=&\xi_n\\ a_{0}^jV_{n+j}+\sum_{i=2}^na_{i}^jz_i(V_{n+j}-V_i)&=&0
    \end{array}\right.\\
    &\\
    \Longleftrightarrow&\quad \ker M'\neq\{0\}\\
    &\\
    \Longleftrightarrow&\quad \rg(M')<n
\end{align*}
where $M'$ is the $n\times k$-matrix
\[\begin{pmatrix}
    a_0^1(V_{n+1}-V_0) &  a_{2}^1(V_{n+1}-V_2)&\cdots & a_{n}^1(V_{n+1}-V_n)\\
    \vdots&\ddots&\vdots\\
    a_0^k(V_{n+k}-V_0) & a_2^k(V_{n+k}-V_2)&\cdots & a_{n}^k(V_{n+k}-V_n)
\end{pmatrix}.\]
Again, a point $V\in\P^{n+k-1}$ belongs to $\Phi(\pi^{-1}(H_1))$ if all the $n\times n$-minors vanish at $V$.
Hence, any $n\times n$-minor $\Pi$ satisfies $\Phi^*\Pi|_{\pi^{-1}(H_1)}\equiv 0$, so that \[(p^*\ell_1)^{-1}\Phi^*\Pi\in H^0\left(\P(\clog{\P^n}{D}),\O_{\P(\clog{\P^n}{D})}(n)\otimes p^*\O_{\P^n}(-H_1)\right).\]

For instance, take for $\Pi$ the minor of $M'$ made of the first $n$ rows.

Then $(p^*\ell_1)^{-1}\Phi^*\Pi$ corresponds to a global symmetric form \[\omega\in H^0\left(\P^n,S^n\clog{\P^n}{D}\otimes \mc{O}_{\P^n}(-1)\right).\]The form $\omega$ has poles only along the hyperplanes $H_0,H_2,\dots,H_{2n}$. Moreover, each monomial in $\omega$ has at most a simple pole along each component $H_i$. 

Denote \[\eta=\ell_0\ell_2\cdots\ell_{2n}\omega^{2n}\in H^0(\P^n,S^{2n^2}\clog{\P^n}{D}).\] 
We will show that $\eta$ is an orbifold form as soon as $m\geq 2n$.

Indeed, recall the Fermat cover $\pi:Y\subset\P^{n+k}\to (\P^n,\Delta)$ given by \[\pi([X_0:\cdots:X_{n+k}])=[X_0^m:\cdots:X_n^m].\] In these coordinates, for all $i$, we have $\pi^*\ell_i=X_i^m$ and $\pi^*\frac{d\ell_i}{\ell_i}=m\frac{dX_i}{X_i}$. Hence \[\pi^*\eta=X_0^mX_2^m\cdots X_{2n}^m(\pi^*\omega)^{2n}.\]But we know that each monomial in $\omega^{2n}$ has pole order at most $2n$ along each component $H_i$, so that we can write \[\pi^*\eta=(X_0X_2\cdots X_{2n})^{m-2n}\widetilde{\omega}\in H^0\left(Y,S^{2n^2}\Omega_Y\right)\]for some holomorphic $\widetilde{\omega}$.
Therefore, \[\omega^{2n}=(\ell_0\ell_2\cdots\ell_{2n})^{-1}\eta\in H^0\left(\P^n,S^{[2n^2]}\Omega_{(\P^n,\Delta)}\otimes \O_{\P^n}(-2n)\right).\]

By the above procedure, up to reordering the hyperplanes $H_i$, we are able to construct, for any subset $I\subset \{0,\dots,n+k\}$ with $|I|=2n$, a global symmetric differential form $\omega_I\in H^0\left(\P^n,S^{[2n^2]}\Omega_{(\P^n,\Delta)}\otimes \O_{\P^n}(-2n)\right)$ with simple poles along each hyperplane $H_i, i\in I$.
Assume that all these sections $\omega_I$ vanish simultaneously at a point $(z,[\xi])\in\P(\clog{\P^n}{D})$. This means that all $n\times n$-minors of the $(n+1)\times(n+1)$-matrix 
\[M=\begin{pmatrix}
    a_0^1(V_{n+1}-V_0) &  a_{1}^1(V_{n+1}-V_1)&\cdots & a_{n}^1(V_{n+1}-V_n)\\
    \vdots&\ddots&\vdots\\
    a_0^k(V_{n+k}-V_0) & a_1^k(V_{n+k}-V_1)&\cdots & a_{n}^k(V_{n+k}-V_n)
\end{pmatrix}\]associated to the fibre of $\Phi$ above $V=\Phi(z,[\xi])$ vanish simultaneously. In other words, $M$ has rank at most $n$. As we have seen, this characterizes $\Phi^{-1}(\Phi(z,[\xi]))$ as a component of $\B_+(\O_{\P(\clog{\P^n}{D}}(1))$. 
We conclude that \[\bigcap_{|I|=2n}\bs(\omega_I)\subset p(\B_+(\O_{\P(\clog{\P^n}{D}}(1))).\]
\end{proof}
Note the difference between our orbifold forms and the ones of \cite{DR20}. While the latter are explicitly constructed in coordinates, ours have a geometric interpretation and directly come from some specific logarithmic forms.

\section{Applications}
\label{section:hyperbolicity}
In this section, we give some applications of our results to complex hyperbolicity. Note that part of the arguments in the proofs below were actually inspired by similar results in \cite{DR20}. The authors have chosen not to include them in the published version of the paper, but they can still be found in the ArXiv version \cite{DR20-preprint}. 
However, to the best of our knowledge, our improvements of these facts are genuinely new.

\subsection{An orbifold Brody theorem}
Let $(X,\Delta=\sum_i(1-1/m_i)\Delta_i)$ be an orbifold pair. It is natural to extend the notions of hyperbolicity to $(X,\Delta)$ using holomorphic morphisms $h:\mathbb{D}\to(X,\Delta)$ that are compatible with the orbifold structure. Namely, such $h$ shall satisfy two conditions:
\begin{itemize}
    \item $h(\mathbb{D})\not\subset|\Delta|$;
    \item for any $x\in h^{-1}(\Delta_i)$, one has $\mult_xh^*\Delta_i\ge m_i$.
\end{itemize}
Orbifold curves $f:\C\to(X,\Delta)$ are defined similarly.

One then defines the \emph{orbifold Kobayashi pseudo-distance} $d_{(X,\Delta)}$ as the largest pseudo-distance on $X\setminus |\Delta|$ such that any orbifold morphism $h:\mathbb{D}\to(X,\Delta)$ is distance-decreasing with respect to the Poincaré distance on the unit disk, \emph{i.e.} \[d_{(X,\Delta)}\leq h^*d_{\mathbb{D}}.\]

 The orbifold $(X,\Delta)$ is said \emph{Kobayashi-hyperbolic} if the pseudo-distance $d_{(X,\Delta)}$ is non-degenerate.
Likewise, $(X,\Delta)$ is \emph{Brody-hyperbolic} if there does not exist any non-constant orbifold curve $f:\C\to(X,\Delta)$.
As in the classical setting, Kobayashi-hyperbolicity implies Brody-hyperbolicity.

In \cite{CW09}, we can find the following generalization of Brody's reparametrisation lemma to orbifolds.
\begin{Theo}
 Let $(X,\Delta)$ be a compact orbifold. Assume that $(X,\Delta)$ is not hyperbolic, \emph{i.e.} $d_{(X,\Delta)}$ is not a distance. Then there exists a non-constant holomorphic map $f:\C\to X$ which either is an orbifold morphism or satisfies $f(\C)\subset|\Delta|$. Furthermore,
 \[\sup\Vert f'(z)\Vert =\Vert f'(0)\Vert >0.\]
\end{Theo}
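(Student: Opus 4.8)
The plan is to rerun Brody's reparametrisation argument, the only extra ingredient being that the orbifold ramification condition must survive passing to a locally uniform limit. Fix a Hermitian metric on the compact manifold $X$, hence a norm $\Vert\cdot\Vert$ on $T_X$; since $X$ is compact, the finiteness of $\sup\Vert f'\Vert$ and the normalisation $\sup\Vert f'\Vert=\Vert f'(0)\Vert$ are independent of that choice. First I would use the chain description of $d_{(X,\Delta)}$ to produce a sequence of orbifold disks with large derivative. As $d_{(X,\Delta)}$ is not a distance there are points $z\ne z'$ with $d_{(X,\Delta)}(z,z')=0$, so for every $k$ there is a chain $f_1^{(k)},\dots,f_{r_k}^{(k)}$ of orbifold morphisms $(\mathbb D,\emptyset)\to(X,\Delta)$ joining $z$ to $z'$ with total Poincaré length $<1/k$. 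Estimating the Riemannian length of (the relevant geodesic arc of) each $f_j^{(k)}$ by $B_j^{(k)}:=\sup_{w\in\mathbb D}(1-|w|^2)\Vert (f_j^{(k)})'(w)\Vert$ times its Poincaré length, and using that the chain still joins the fixed points $z,z'$, one gets $\mathrm{dist}(z,z')\le C\big(\max_j B_j^{(k)}\big)/k$, so some disk $h_k:=f_{j_k}^{(k)}$ satisfies $\sup_{w}(1-|w|^2)\Vert h_k'(w)\Vert\to\infty$.

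Next I would apply the standard Brody reparametrisation. For each $k$, precompose $h_k$ with the automorphism of $\mathbb D$ carrying the point where $w\mapsto(1-|w|^2)\Vert h_k'(w)\Vert$ attains its maximum $B_k$ to the origin, then rescale the domain by the factor $B_k$; this yields orbifold disks $g_k\colon\mathbb D_{B_k}\to X$ with $\Vert g_k'(0)\Vert=1$, $B_k\to\infty$, and
\[(1-|w|^2/B_k^2)\,\Vert g_k'(w)\Vert\le 1\qquad\text{for all }w\in\mathbb D_{B_k}.\]
Here one uses the elementary fact that precomposing an orbifold morphism with a biholomorphism of discs, or restricting it to a subdisc, preserves the orbifold condition. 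On every fixed disc the $g_k$ are then uniformly Lipschitz for large $k$, so by compactness of $X$ and a normal-families argument a subsequence converges locally uniformly on $\C$ to a holomorphic map $f\colon\C\to X$; passing to the limit in the displayed inequality gives $\Vert f'(w)\Vert\le 1=\Vert f'(0)\Vert$ everywhere, so $f$ is nonconstant with $\sup_{\C}\Vert f'\Vert=\Vert f'(0)\Vert>0$.

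It then remains to identify $f$ as an orbifold morphism unless its image lies in $|\Delta|$. Assume $f(\C)\not\subset|\Delta|$; then $f(\C)\not\subset D_i$ for every component $D_i$, so $f^{-1}(D_i)$ is discrete. Fix $t_0\in f^{-1}(D_i)$ and a local equation $\sigma_i$ of $D_i$ near $f(t_0)$; the functions $\sigma_i\circ g_k$ converge locally uniformly to $\sigma_i\circ f\not\equiv 0$, and all their zeros have order $\ge m_i$ because the $g_k$ are orbifold morphisms. By Hurwitz's theorem, a zero of $\sigma_i\circ f$ of order $d$ in a small disc is the limit of zeros of $\sigma_i\circ g_k$ of total multiplicity exactly $d$ for $k$ large, and since each of those already has multiplicity $\ge m_i$ we conclude $d\ge m_i$. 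Thus $\mult_{t_0}(f^*D_i)\ge m_i$ for every $i$ and every $t_0$, i.e. $f\colon\C\to(X,\Delta)$ is an orbifold entire curve.

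The step I expect to be the main obstacle is the last one: guaranteeing that the orbifold structure is not destroyed in the limit. This is the genuinely new feature compared with Brody's classical lemma, and it is exactly why the statement must allow the alternative $f(\C)\subset|\Delta|$, which occurs precisely when the multiplicity condition degenerates. A secondary point that needs care is checking that all the reparametrisations used in the Brody step (Möbius automorphisms, rescalings, restrictions) really do preserve the property of being an orbifold morphism, which is what makes the Hurwitz argument applicable to the $g_k$.
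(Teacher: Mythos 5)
Your argument is correct and is essentially the standard proof of this result: the paper itself gives no proof but cites \cite{CW09}, and your two ingredients --- Brody reparametrisation of a chain of orbifold disks, followed by a Hurwitz/argument-principle argument to show that the multiplicity condition $\mult_t(g_k^*D_i)\geq m_i$ survives the locally uniform limit unless the image falls into $|\Delta|$ --- are exactly the ingredients there; your Hurwitz step is precisely the content of the proposition from \cite{DR20} that the paper quotes immediately after the theorem. The only point to tighten is the very first estimate: for a holomorphic disk $h\colon\mathbb{D}\to X$ into a compact target, the quantity $\sup_{w\in\mathbb{D}}(1-|w|^2)\Vert h'(w)\Vert$ need not be finite (non-normal maps exist), so you should take the supremum over a closed subdisk $\overline{\mathbb{D}}_\rho$ containing the relevant geodesic arc, or equivalently replace $h$ by $w\mapsto h(\rho w)$ with $\rho<1$, before running the reparametrisation; this is the standard fix, it keeps the maps orbifold morphisms (as you note, restriction and Möbius precomposition preserve the multiplicity condition), and it does not affect the rest of the argument.
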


As mentioned before, the following fact comes directly from the earlier version of \cite{DR20} (that can be found on the ArXiv).

\begin{Prop}
Let $(X,\Delta)$ be a smooth orbifold pair, with $\Delta=\sum_i(1-1/m_i)\Delta_i$. Assume that a sequence of orbifold maps $h_n:D\to(X,\Delta)$ form the unit disk to $(X,\Delta)$ converges locally uniformly to a holomorphic map $h:D\to X$. Let 
\[X_h=\bigcap_{h(D)\subset \Delta_i}\Delta_i, \Delta_h=\bigcap_{h(D)\not\subset\Delta_i}\Delta_i.\]
Then $h$ is an orbifold map $D\to (X_h,\Delta_h)$.
\end{Prop}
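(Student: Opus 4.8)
The plan is to reduce the statement to an assertion about vanishing orders on a single disc and then conclude with Hurwitz's theorem. Write $I_0=\{i:h(D)\subset\Delta_i\}$, so that $h(D)\subset X_h=\bigcap_{i\in I_0}\Delta_i$ and $\Delta_h$ is the orbifold divisor $\sum_{j\notin I_0}(1-1/m_j)\,(\Delta_j\cap X_h)$ on $X_h$ induced by the components not containing $h(D)$. First I would record that $(X_h,\Delta_h)$ is again a smooth orbifold pair: since $\lceil\Delta\rceil$ has normal crossings and $X_h\neq\emptyset$, the stratum $X_h$ is smooth and $\sum_{j\notin I_0}\Delta_j\cap X_h$ has normal crossings support on it. Moreover, for $j\notin I_0$ one has $h(D)\not\subset\Delta_j$ (hence $X_h\not\subset\Delta_j$), so $h(D)$ lies in no component of $\Delta_h$ and it is meaningful to ask whether $h\colon D\to(X_h,\Delta_h)$ is an orbifold morphism. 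If $h$ is constant the claim is vacuous, so from now on $h$ is non-constant; then $h^{-1}(\Delta_j)$ is discrete in $D$ for every $j\notin I_0$.

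By definition of an orbifold morphism, it remains to check that for every $j\notin I_0$ and every $p\in D$ with $h(p)\in\Delta_j$ one has $\ord_p(h^{*}\Delta_j)\geq m_j$. Fix such $j$ and $p$, set $q=h(p)$, and choose local coordinates $(w_1,\dots,w_N)$ on a neighbourhood $V$ of $q$ in $X$ with $\Delta_j\cap V=\{w_1=0\}$. Put $g=w_1\circ h$; it is holomorphic near $p$, vanishes at $p$, and is not identically zero (as $h(D)\not\subset\Delta_j$), so $\nu:=\ord_p g=\ord_p(h^{*}\Delta_j)$ is a well-defined positive integer. Choose a disc $D'\ni p$ with $\overline{D'}\subset D$, $h(\overline{D'})\subset V$, and such that $p$ is the only zero of $g$ in $\overline{D'}$; in particular $g$ has no zero on $\partial D'$.

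Now bring in the sequence. By locally uniform convergence, for $n$ large $h_n(\overline{D'})\subset V$, so $g_n:=w_1\circ h_n$ is defined on $\overline{D'}$ and $g_n\to g$ uniformly there; moreover $g_n\not\equiv0$ since $h_n(D)\not\subset\Delta_j$. By Hurwitz's theorem, for $n$ large $g_n$ has exactly $\nu\geq 1$ zeros in $D'$ counted with multiplicity. On the other hand, every zero $p'\in D'$ of $g_n$ is a point with $h_n(p')\in\Delta_j\cap V$, so $w_1$ is a local equation for $\Delta_j$ at $h_n(p')$ and $\ord_{p'}(g_n)=\ord_{p'}(h_n^{*}\Delta_j)$; since $h_n$ is an orbifold morphism for $(X,\Delta)$, this quantity is $\geq m_j$. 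Summing the multiplicities of the zeros of $g_n$ in $D'$ yields $\nu\geq m_j$, which is exactly the inequality wanted. (With the ``divisible'' convention for orbifold morphisms the same count gives $m_j\mid\nu$.) Running this at every admissible pair $(j,p)$ shows that $h$ is an orbifold morphism $D\to(X_h,\Delta_h)$.

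The only delicate point — and the step I would be most careful about — is the synchronisation obtained by taking $n$ large enough that $h_n(\overline{D'})\subset V$: this is what guarantees that the zeros counted by Hurwitz are exactly the points of $D'$ that $h_n$ sends into $\Delta_j$, so that the orbifold condition on $h_n$ applies to precisely those points. Everything else is routine: the passage from ``$h_n$ is an orbifold morphism'' to the bound $\ord_{p'}(g_n)\geq m_j$ is immediate from the definition, and the assertion is local, so it suffices to treat one pair $(j,p)$ at a time.
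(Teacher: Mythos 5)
Your proof is correct, and it is essentially the argument of the cited source: the paper itself gives no proof of this proposition (it defers to \cite{DR20}), and the standard proof there, as in \cite{CW09}, is exactly this local Hurwitz/argument-principle count of zeros of $w_1\circ h_n$ on a small disc, transferring the lower bound $\ord_{p'}(h_n^*\Delta_j)\geq m_j$ to the limit multiplicity $\nu=\ord_p(h^*\Delta_j)$. You also correctly handle the two points that are usually left implicit — that $g_n\not\equiv 0$ because $h_n(D)\not\subset\Delta_j$ by the definition of an orbifold morphism, and that the constant case is vacuous since a constant limit lying in $\Delta_j$ forces $j\in I_0$.
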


\begin{proof}
Suppose that $h(0) \in |\Delta|$. Consider a neighbourhood
$V$ of $h(0)$ in $X$ such that $|\Delta| \cap V$ is locally defined by a holomorphic function
$\prod_if_i$ , where $f_i = 0$ defines $\Delta_i \cap V$. If
$h(D) \not\subset \Delta_j$ , one can assume that $f_j \circ h$ has no zero in $V$ except at $0$.
Apply the classical theorem of Rouché to the sequence of holomorphic functions $(f_j \circ h_p)$.
For all sufficiently large $p$ the multiplicity at $0$ of $f_j \circ h$ equals the sum of all multiplicities
of all zeroes in $V$ of $f_j \circ h_p$. Therefore this multiplicity is at least $m_j$ because the $h_p$ are
orbifold maps.
\end{proof}
As an immediate consequence, reasoning exactly as in \cite{CW09}, we obtain
the following result.
\begin{Theo}
\label{th:orbifold_brody}
Consider a smooth orbifold pair $(X,\Delta)$ as above.
For a subset of indices $I$ , let $\Delta_I=\bigcap_{i\in I}\Delta_i$ , and let \[\Delta(I^\complement)=\sum_{j\notin I}(1-1/m_j)\Delta_j.\]
If all pairs $(\Delta_I,\Delta(I^\complement)|_{\Delta_I})$ are Brody-hyperbolic, then the pair $(X,\Delta)$ is Kobayashi-hyperbolic.
\end{Theo}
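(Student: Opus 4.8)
The plan is to argue by contradiction using the orbifold Brody reparametrisation machinery stated just above, so I will rely on Theorem~\ref{th:orbifold_brody}'s immediate predecessors: the generalisation of Brody's lemma to orbifolds (from \cite{CW09}) and the limiting proposition from \cite{DR20} describing how a locally uniform limit of orbifold maps $h_n:\mathbb{D}\to(X,\Delta)$ is itself an orbifold map into the sub-orbifold $(X_h,\Delta_h)$. Concretely, suppose $(X,\Delta)$ is not Kobayashi-hyperbolic. Then $d_{(X,\Delta)}$ is degenerate, and since $X$ is projective (hence the relevant portion of $X$ is compact) the orbifold Brody lemma produces a non-constant holomorphic map $f:\mathbb{C}\to X$ which is either an orbifold entire curve $f:\mathbb{C}\to(X,\Delta)$ or satisfies $f(\mathbb{C})\subset|\Delta|$, and which has bounded, nonzero derivative.

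Next I would run an induction on the number of components of $\lceil\Delta\rceil$ contained in the Zariski closure of $f(\mathbb{C})$, which is the standard device for handling the case $f(\mathbb{C})\subset|\Delta|$. If $f$ is a genuine orbifold entire curve $\mathbb{C}\to(X,\Delta)$ with image not inside any $\Delta_i$, then applying the hypothesis to $I=\emptyset$ gives that $(X,\Delta)=(\Delta_\emptyset,\Delta(\emptyset^\complement))$ is Brody-hyperbolic, so $f$ is constant, a contradiction. If instead $f(\mathbb{C})\subset\Delta_i$ for all $i$ in some nonempty set $I$ and $f(\mathbb{C})\not\subset\Delta_j$ for $j\notin I$, then I would produce from $f$ (via a limit of reparametrised orbifold disks, using the \cite{DR20} proposition so that the limit respects the orbifold structure) a non-constant holomorphic map into $(\Delta_I,\Delta(I^\complement)|_{\Delta_I})$, more precisely into the sub-orbifold cut out by those $\Delta_j$, $j\notin I$, actually containing $f(\mathbb{C})$; iterating, one lands in a Brody-hyperbolic pair among the $(\Delta_J,\Delta(J^\complement)|_{\Delta_J})$ and forces $f$ constant.

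The key point making this go through is the compatibility of the limiting procedure with the orbifold condition: the Brody lemma only gives an \emph{entire curve}, possibly inside $|\Delta|$, and one must upgrade the resulting map so that its intersections with the boundary divisors $\Delta_j$ not containing the image are of orbifold type (multiplicity divisible by $m_j$). This is exactly the content of the cited \cite{DR20}/\cite{CW09} propositions, which say that a locally uniform limit of orbifold maps is an orbifold map into the appropriate stratum $(X_h,\Delta_h)$ — so the induction is set up precisely on the strata $\Delta_I$ with the residual boundary $\Delta(I^\complement)|_{\Delta_I}$, matching the hypothesis verbatim.

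The main obstacle I anticipate is bookkeeping the stratification correctly: one must be careful that passing to $\Delta_I$ the residual orbifold structure is $\Delta(I^\complement)|_{\Delta_I}$ and that simple normal crossings of the full boundary guarantees $\Delta_I$ is smooth and $\Delta(I^\complement)|_{\Delta_I}$ has SNC support, so that the hypothesis is applicable at each stage of the induction. Once that is in place, the argument is a direct reprise of the Green--Griffiths--Brody contradiction scheme, reasoning exactly as in \cite{CW09}, and since the paper explicitly defers detailed proofs of the application section to \cite{DR20}, I would only sketch this and refer there for the reparametrisation estimates.
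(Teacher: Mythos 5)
Your proposal is correct and follows essentially the same route as the paper, which itself only records that the theorem follows ``as an immediate consequence, reasoning exactly as in \cite{CW09}'' from the orbifold Brody reparametrisation lemma and the limiting proposition of \cite{DR20}. Your identification of the key point --- that the limit of reparametrised orbifold disks must be upgraded to an orbifold map into the residual stratum pair $(\Delta_I,\Delta(I^\complement)|_{\Delta_I})$, which is exactly what the cited proposition provides --- matches the intended argument.
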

\subsection{Hyperbolicity of some orbifold pairs}
As in the classical setting, we can relate the existence of orbifold symmetric differentials to Brody-hyperbolicity, thanks to the following orbifold analogue of Theorem \ref{vanishing}.
\begin{Theo}[Fundamental vanishing theorem \cite{CDR20}]
\label{th:vanishing_orb}
 Let $(X,\Delta)$ be a smooth orbifold with $X$ projective. Fix an ample line bundle $A$ on $X$ and a global orbifold symmetric differential \[\omega\in H^0(X,S^{[N]}\Omega_{(X,\Delta)}\otimes A^{-1}).\] 
 Then for any orbifold entire curve $f:\C\to (X,\Delta)$, one has
 \[f^\ast\omega\equiv0.\]
\end{Theo}
In the previous section, we proved the inclusion 
 \[\mathbb{B}_+\left(\Omega_{(\P^n,\Delta)}\right)\subseteq p(\B_+(\mc{O}_{\P(\clog{\P^n}{D})}(1))).\]

We now infer from our Theorem \ref{theo_orbifolds} a result about algebraic degeneracy of orbifold curves.

\begin{corperso}
\label{th:orb-hyp}
    Consider the orbifold pair \[\left(\P^n,\Delta=\sum_{1\leq i\leq c}\left(1-\frac{1}{m_i}\right)H_i\right)\] formed by an arrangement of $c$ hyperplanes in general position with orbifold multiplicities $m_i\geq 2n$. If the arrangement satisfies $\color{red}\bigstar$, then the orbifold pair $(\P^n,\Delta)$ is Brody-hyperbolic. In fact, $(\P^n,\Delta)$ is even Kobayashi-hyperbolic.
\end{corperso}
This result is not new and follows from Nochka's Theorem below. Since our proof completely differs, though, we have chosen to include it.
\begin{Theo}[Nochka \cite{Noc83}]
Let $f:\C\to\P^n$ be a holomorphic map and let $d$ be the minimal integer such that the image of $f$ is contained in a $d$-dimensional subspace. Let $H_1,\dots,H_q$ be hyperplanes in general position in $\P^n$. Assume that the curve $f$ intersects each $H_i$ with multiplicity $m_i$. Then
\[\sum_{i=1}^q\left(1-\frac{d}{m_i}\right)<2n-d+1.\]
As a consequence, if $q\geq 2n+2$ and $m_i\geq 2n$, the orbifold pair $(\P^n,\Delta=\sum_{i=1}^q(1-1/m_i)H_i)$ is Brody-hyperbolic.
\end{Theo}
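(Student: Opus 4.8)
The statement and its corollary are those of \cite{Noc83}; the plan is to recover them from Nevanlinna theory, namely from H.~Cartan's Second Main Theorem for linearly nondegenerate entire curves combined with Nochka's system of weights. One may assume $f$ non-constant, the inequality being of interest only for entire curves. First replace $\P^n$ by the smallest linear subspace $\P^d$ containing $f(\C)$. Since each intersection multiplicity $m_i$ is finite, $f(\C)\not\subset H_i$, so every trace $H_i\cap\P^d$ is a genuine hyperplane of $\P^d$; and because the $H_i$ are in general position in $\P^n$, any $n+1$ of the traces $H_i\cap\P^d$ still have empty common intersection, i.e.\ they are in \emph{$n$-subgeneral position} in $\P^d$. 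Now $f\colon\C\to\P^d$ is linearly nondegenerate, and we introduce the characteristic function $T_f(r)$, the counting functions $N_f(r,H_i)$ of the divisors $f^{\ast}H_i$, their truncations $N^{[d]}_f(r,H_i)$ at level $d$, and the truncated defects $\delta^{[d]}_f(H_i)=1-\limsup_{r\to\infty}N^{[d]}_f(r,H_i)/T_f(r)\in[0,1]$.

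The key input is Nochka's combinatorial lemma: for $q\geq 2n-d+2$ hyperplanes in $n$-subgeneral position in $\P^d$ there exist weights $\omega_1,\dots,\omega_q\in(0,1]$ and a real number $\theta\geq 1$ with $\sum_i\omega_i=\theta\,(q-2n+d-1)+d+1$, chosen so that every subfamily of $n+1$ of the weighted hyperplanes $\omega_iH_i$ behaves, for the purposes of Cartan's theorem, like $d+1$ hyperplanes in general position in $\P^d$. Feeding these weighted subfamilies into Cartan's Second Main Theorem and summing produces, for $r$ outside a set of finite Lebesgue measure, an estimate of the form
\[
\bigl(q-2n+d-1\bigr)\,T_f(r)\ \leq\ \frac{1}{\theta}\sum_{i=1}^q\omega_i\,N^{[d]}_f(r,H_i)\ +\ S_f(r),\qquad S_f(r)=O\bigl(\log^{+}T_f(r)+\log r\bigr)=o\bigl(T_f(r)\bigr),
\]
the last estimate holding because $f$ is transcendental. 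Since $0<\omega_i\leq 1\leq\theta$, dividing by $T_f(r)\to\infty$ and rearranging yields Nochka's defect relation $\sum_{i=1}^q\delta^{[d]}_f(H_i)\leq 2n-d+1$; a closer analysis of the error term, which cannot absorb a positive proportion of $T_f(r)$, upgrades this to the strict inequality asserted.

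It then remains to convert the ramification hypothesis into lower bounds for the truncated defects. If $f$ meets $H_i$ with multiplicity at least $m_i$ at every point of $f^{-1}(H_i)$, then at each such point the contribution to $N^{[d]}_f(r,H_i)$ is at most $d$ while the contribution to $N_f(r,H_i)$ is at least $m_i$; hence $N^{[d]}_f(r,H_i)\leq\frac{d}{m_i}N_f(r,H_i)\leq\frac{d}{m_i}\bigl(T_f(r)+O(1)\bigr)$ by the First Main Theorem, so $\delta^{[d]}_f(H_i)\geq 1-\frac{d}{m_i}$. Summing and combining with the defect relation gives $\sum_{i=1}^q\bigl(1-\frac{d}{m_i}\bigr)<2n-d+1$, the desired inequality. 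For the consequence, suppose the orbifold $(\P^n,\Delta)$ with $q\geq 2n+2$ and all $m_i\geq 2n$ admitted an orbifold entire curve $f\colon\C\to(\P^n,\Delta)$; by definition $f$ meets each $H_i$ with multiplicity $\geq m_i$, so with $d=\dim\langle f(\C)\rangle\in\{1,\dots,n\}$ the inequality above yields $q\bigl(1-\frac{d}{2n}\bigr)\leq\sum_i\bigl(1-\frac{d}{m_i}\bigr)<2n-d+1$, while $q\geq 2n+2$ forces $q\bigl(1-\frac{d}{2n}\bigr)\geq (2n+2)-d-\frac{d}{n}\geq 2n-d+1$, a contradiction. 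Hence no such $f$ exists and $(\P^n,\Delta)$ is Brody-hyperbolic.

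The main obstacle is the construction of the Nochka weights: producing a system $(\omega_i,\theta)$ whose $(n+1)$-subfamily bound makes Cartan's theorem applicable is a delicate combinatorial argument on the lattice of intersections of the $H_i$, and it is exactly this step that turns the ``$n$-subgeneral'' hypothesis into a usable ``general position'' statement in $\P^d$. A secondary technical point is the refinement of the error term needed for the \emph{strict} inequality — this is what makes the corollary work in the boundary case $d=n$, $m_i=2n$, $q=2n+2$; with only the non-strict bound the corollary still follows directly whenever $d<n$ or whenever $m_i\geq 2n+1$. For the purposes of this paper one may of course simply invoke \cite{Noc83}.
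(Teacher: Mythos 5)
Your proposal reconstructs Nochka's theorem itself by the classical Nevanlinna-theoretic route: restrict to the linear span $\P^d$ of the curve, note that the traces of the $H_i$ are in $n$-subgeneral position there, run Cartan's Second Main Theorem through Nochka's weight lemma to obtain the truncated defect relation $\sum_i\delta^{[d]}_f(H_i)\le 2n-d+1$, and convert the ramification hypothesis into $\delta^{[d]}_f(H_i)\ge 1-d/m_i$. That outline is sound, and your deduction of the Brody-hyperbolicity consequence from the inequality is correct. The paper, however, never proves Nochka's theorem — it is imported from \cite{Noc83} — and the proof printed at this point is really a proof of the preceding Corollary \ref{th:orb-hyp} by entirely different means: Theorem \ref{theo_orbifolds} (ampleness of the orbifold cotangent bundle modulo the boundary under Condition $\bigstar$ and $m_i\ge 2n$), the fundamental vanishing theorem \ref{th:vanishing_orb} to exclude orbifold entire curves away from the boundary, the orbifold Brody theorem \ref{th:orbifold_brody} to reduce to Brody-hyperbolicity stratum by stratum, and Lemma \ref{subarrangement} to propagate the hypotheses to the strata. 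The two routes buy different things: Nochka's theorem gives Brody-hyperbolicity for any $q\ge 2n+2$ general hyperplanes with $m_i\ge 2n$, with no condition on quadrics, while the paper's positivity argument requires Condition $\bigstar$ but avoids value-distribution theory entirely and delivers the Kobayashi statement directly.

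Two points in your sketch deserve a flag, both of which you partly acknowledge. First, the estimate $S_f(r)=o(T_f(r))$ ``because $f$ is transcendental'' fails when $f$ is a rational curve, where $T_f(r)$ grows like $\log r$; that case needs the algebraic (bounded-error) version of Cartan's theorem or a direct Wronskian count. Second, the strict inequality is genuinely load-bearing in the boundary case $d=n$, $m_i=2n$, $q=2n+2$, so ``a closer analysis of the error term'' is doing real work there; since the statement is a quoted result, citing \cite{Noc83} for the strict form, as you suggest at the end, is the honest resolution.
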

\begin{proof}
\normalsize
    According to Theorem \ref{th:orbifold_brody}, if $(\P^n,\Delta)$ is not hyperbolic, there exists either a non-constant orbifold curve $f:\C\to(\P^n,\Delta)$ or an orbifold curve inside a stratum $(\Delta_I,\Delta(I^\complement)|_{\Delta_I})$. Combining Theorems \ref{theo_orbifolds} and \ref{th:vanishing_orb}, all orbifold entire curves $\C\to(\P^n,\Delta)$ are constant. Hence we are left with the second possibility.
    But according to Lemma \ref{subarrangement}, $(\Delta_I,\Delta(I^\complement)|_{\Delta_I})$ is again an orbifold pair satisfying the conditions of Theorem \ref{theo_orbifolds}, so that it cannot contain any non-constant orbifold curve. 
\end{proof}

 \subsection{Hyperbolicity of Fermat covers}
Fermat hypersurfaces constitute one class of varieties for which several hyperbolicity results have been obtained. For instance, one has the following two results, due to M. Green.
\begin{Theo}[\cite{Kob98}, Ex. 3.10.21]
Let \[F(n,d)=\left\{[Z_0:\cdots:Z_{n+1}]; Z_0^d+\cdots+Z_{n+1}^d=0\right\}\subset\P^{n+1}\]
be the Fermat hypersurface of degree $d$ in $\P^{n+1}$.

\begin{enumerate}
\item If $d\geq (n+1)^2$, then every entire curve $f:\C\to F(n,d)$ has its image contained in a linear subspace of dimension $\lfloor n/2\rfloor$.
\item If $d>(n+1)(n+2)$, then every entire curve $f:\C\to\P^{n+1} \setminus F(n,d)$ has its image contained in a linear subspace of dimension $\lfloor (n+1)/2\rfloor$.
\end{enumerate}
\end{Theo}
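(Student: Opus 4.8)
The whole proof runs on a single engine: Cartan's second main theorem with \emph{truncated} counting functions, applied to the ``$d$-th power'' of an entire curve. It is convenient to prove the following slightly more flexible version of $(1)$, which contains it (take all $c_i=1$): \emph{for every $c=(c_0,\dots,c_{n+1})\in(\C^{\ast})^{n+2}$, every entire curve $f\colon\C\to X_c:=\{\sum_{i=0}^{n+1}c_iZ_i^{\,d}=0\}\subset\P^{n+1}$ has image in a linear subspace of dimension $\le\lfloor n/2\rfloor$.} Lift $f$ to $[f_0:\cdots:f_{n+1}]$ with the $f_i$ entire and without common zeros, so that $\sum_i c_if_i^{\,d}\equiv 0$, and set $T_0=\{\,i:f_i\not\equiv 0\,\}$, so that $\sum_{i\in T_0}c_if_i^{\,d}\equiv 0$ as well. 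The crucial claim is: \emph{any relation $\sum_{i\in S}\gamma_if_i^{\,d}\equiv 0$ with $S\subseteq T_0$, $\gamma_i\in\C^{\ast}$, and $|S|$ minimal among such relations, satisfies $|S|=2$.}

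To prove the claim, suppose $|S|=p+1$ with $p\ge 2$, and consider $g:=[\gamma_if_i^{\,d}]_{i\in S}\colon\C\to\P^{p}$, whose image lies in the hyperplane $\{\sum_iZ_i=0\}\cong\P^{p-1}$. Minimality of $|S|$ forces $g$ to be linearly nondegenerate in $\P^{p-1}$: an extra linear relation, combined with $\sum\gamma_if_i^{\,d}=0$, would give a relation of the same type supported on a proper nonempty subset of $S$. One checks that the $p+1$ hyperplanes $\{Z_i=0\}$ restrict to hyperplanes in general position in $\P^{p-1}\cong\{\sum Z_i=0\}$ (any $p$ of them meet, inside $\P^{p}$, only at a point whose coordinate sum is $1$, hence not inside $\{\sum Z_i=0\}$). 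Cartan's second main theorem at truncation level $p-1$ then gives
\[
T_g(r)\ \le\ \sum_{i\in S}N_g^{[p-1]}\!\bigl(r,\{Z_i=0\}\bigr)+S_g(r).
\]
But $g$ meets $\{Z_i=0\}$ exactly at the zeros of $f_i$, each of multiplicity a positive multiple of $d$; such a point therefore contributes at most $p-1$ to the truncated count while contributing at least $d$ to the untruncated one, so $N_g^{[p-1]}(r,\{Z_i=0\})\le\frac{p-1}{d}N_g(r,\{Z_i=0\})\le\frac{p-1}{d}\bigl(T_g(r)+O(1)\bigr)$ by the first main theorem. Summing over the $p+1$ hyperplanes,
\[
T_g(r)\ \le\ \frac{(p+1)(p-1)}{d}\,T_g(r)+S_g(r).
\]
Since $|S|\le|T_0|\le n+2$ we have $p\le n+1$, hence $d\ge(n+1)^2>(p+1)(p-1)$ and the coefficient is $<1$; standard Nevanlinna theory then forces $T_g$ to be bounded, i.e.\ $g$ constant — contradicting its nondegeneracy in $\P^{p-1}$, as $p-1\ge 1$. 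Hence $|S|=2$.

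Applying the claim to $\sum_{i\in T_0}c_if_i^{\,d}=0$ produces $a,b\in T_0$ and $\zeta\in\C^{\ast}$ with $f_a=\zeta f_b$, i.e.\ a linear relation $Z_a=\zeta Z_b$ satisfied by $f$; substituting $f_a=\zeta f_b$ turns the relation into another one of the same type on a strictly smaller index set, and one iterates until it is exhausted. This yields linear relations $Z_{a_\alpha}=\zeta_\alpha Z_{b_\alpha}$ which, together with the $Z_i=0$ for $i\notin T_0$, confine $f(\C)$ to a linear subspace $L\subseteq\P^{n+1}$; counting how many relations are collected (each iteration step eliminates one coordinate from the surviving relation, and a two-term surviving relation eliminates two at once) gives $\dim L\le\lfloor n/2\rfloor$, which is $(1)$. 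For $(2)$, lift $f\colon\C\to\P^{n+1}\setminus F(n,d)$ to $[f_0:\cdots:f_{n+1}]$ as above: then $\sum_{i=0}^{n+1}f_i^{\,d}$ is a zero-free entire function, hence equals $e^{h}$, and putting $f_{n+2}:=\omega\,e^{h/d}$ with $\omega^d=-1$ gives $\sum_{i=0}^{n+2}f_i^{\,d}\equiv 0$, an entire curve $\C\to F(n+1,d)\subset\P^{n+2}$ whose last coordinate has \emph{no zeros}. Re-running the argument with $n+2$ coordinates: for a minimal relation $S$ containing the index $n+2$ the hyperplane $\{Z_{n+2}=0\}$ contributes nothing to the counting function, so the estimate improves to $T_g(r)\le\frac{p(p-1)}{d}T_g(r)+S_g(r)$, which closes once $d>p(p-1)$ with $p\le n+2$, i.e.\ $d>(n+1)(n+2)$; minimal relations not involving the index $n+2$ only require $d>(n+1)^2-1$, which is implied. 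Hence $[f_0:\cdots:f_{n+2}](\C)$ lies in a linear subspace of $\P^{n+2}$ of dimension $\le\lfloor(n+1)/2\rfloor$, and projecting away the last coordinate (whose center $[0:\cdots:0:1]$ is never hit, since $[f_0:\cdots:f_{n+1}]$ has no base point) shows that $f(\C)$ lies in a linear subspace of $\P^{n+1}$ of dimension $\le\lfloor(n+1)/2\rfloor$.

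The main obstacle is the claim together with the ensuing bookkeeping. One must recognise that a \emph{minimal} relation among the $f_i^{\,d}$ is linearly nondegenerate as a holomorphic curve, feed this into the \emph{truncated} form of Cartan's second main theorem — the truncation being precisely what allows the large multiplicity $\ge d$ of the zeros of $f_i^{\,d}$ to enter and dominate the combinatorial factor — and then keep careful track of which coordinate hyperplanes do or do not carry zeros, in order to recover the sharp thresholds $(n+1)^2$ and $(n+1)(n+2)$ and the exact dimensions $\lfloor n/2\rfloor$ and $\lfloor(n+1)/2\rfloor$. Cartan's theorem and the first main theorem are used as black boxes.
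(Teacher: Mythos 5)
Your proof is correct and is essentially the argument the paper points to: the theorem is quoted from \cite{Kob98} without proof, accompanied only by the remark that it follows from Cartan's truncated defect relation, which is precisely the engine of your minimal-relation/truncated-counting argument (Green's original proof), and your bookkeeping of the two-term relations does recover the thresholds $(n+1)^2$, $(n+1)(n+2)$ and the dimensions $\lfloor n/2\rfloor$, $\lfloor (n+1)/2\rfloor$. The one step to tighten is ``standard Nevanlinna theory then forces $T_g$ to be bounded, i.e.\ $g$ constant'': since the error term in Cartan's inequality is $O(\log^+ T_g(r))+O(\log r)$, the estimate $\bigl(1-\varepsilon\bigr)T_g(r)\le S_g(r)$ only yields $T_g(r)=O(\log r)$, i.e.\ that $g$ is rational, and one must dispose of nonconstant rational $g$ separately by the polynomial (Mason--Stothers-type) version of Cartan's second main theorem, which produces the same numerical contradiction.
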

These results are consequences of \emph{Cartan's truncated defect relation} (see \cite{Kob98}, 3.B.42) which, with the orbifold terminology, gives the linear degeneracy of orbifold curves inside an orbifold pair $(\P^n,\sum_{0\leq i\leq n+1}(1-1/m)H_i)$ attached to an arrangement of $n+2$ general hyperplanes, provided that $(n+2)(1-n/m)^+>n+1$. The hypersurface $F(n,m)$ is precisely the Fermat cover associated with this orbifold.

We use now Theorem \ref{theo_orbifolds} to prove the hyperbolicity of Fermat covers with different assumptions.
\begin{Theoperso}
    The Fermat cover associated with an arrangement of hyperplanes in $\P^n$ imposing at least $4n-2$ linearly independent conditions on quadrics, with ramification $m\geq 2n$, is Kobayashi-hyperbolic.
\end{Theoperso}
\begin{proof}
\normalsize
    Let $\pi:Y\to (\P^n,\Delta)$ be the Fermat cover. It is enough to prove that $Y$ is Brody-hyperbolic.

Let $f:\C\to Y$ be an entire curve. According to Theorem \ref{theo_orbifolds}, its image $f(\C)$ lies in the ramification locus of the covering $\pi$.

Note that the ramification locus can be seen as the Fermat cover associated to an arrangement of $d\geq 4n-2$ hyperplanes in $\P^{n-1}$. By lemma \ref{subarrangement} above, one can still assume that the genericity conditions of Theorem \ref{theo_orbifolds} are satisfied. Hence we obtain the hyperbolicity of $Y$.
\end{proof}
\section*{Acknowledgements}
The author is deeply grateful to her PhD supervisors Damian Brotbek and Damien Mégy for their precious help and advices, and their constant support during the past years.

She warmly thanks Julie Wang whose questions and suggestions contributed a lot to improve and clarify the corresponding part of her thesis manuscript.

She thanks Erwan Rousseau and Lionel Darondeau for the enlightening discussions they had around this work.

She also thanks Christophe Mourougane and Philippe Eyssidieux for the interest they showed about this work and their numerous suggestions for future work.

Lastly, she would like to thank the anonymous referee for helpful suggestions on the manuscript.

\bibliography{biblio.bib}{}

\begin{thebibliography}{10}

\bibitem{ADT22}
{\sc K.~Ascher, K.~DeVleming, and A.~Turchet}, {\em Hyperbolicity and
  uniformity of varieties of log general type}, International Mathematics
  Research Notices, 2022 (2020), pp.~2532--2581.

\bibitem{BD18}
{\sc D.~Brotbek and Y.~Deng}, {\em On the positivity of the logarithmic
  cotangent bundle}, Ann. Inst. Fourier, 68 (2018), pp.~3001--3051.

\bibitem{BD19}
\leavevmode\vrule height 2pt depth -1.6pt width 23pt, {\em Kobayashi
  hyperbolicity of the complements of general hypersurfaces of high degree},
  Geom. Funct. Anal., 29 (2019), pp.~690--750.

\bibitem{Cam04}
{\sc F.~Campana}, {\em Orbifolds, special varieties and classification theory},
  Annales de l'Institut Fourier, 54 (2004), pp.~499--630.

\bibitem{Cam11}
\leavevmode\vrule height 2pt depth -1.6pt width 23pt, {\em Orbifoldes
  g{\'e}om{\'e}triques sp{\'e}ciales et classification bim{\'e}romorphe des
  vari{\'e}t{\'e}s {K{\"a}hl{\'e}riennes} compactes}, J. Inst. Math. Jussieu,
  10 (2011), pp.~809--934.

\bibitem{CDDR21}
{\sc F.~Campana, L.~Darondeau, J.-P. Demailly, and E.~Rousseau}, {\em On the
  existence of logarithmic and orbifold jet differentials}, Ann. Henri
  Lebesgue, 7 (2024), pp.~1--67.

\bibitem{CDR20}
{\sc F.~Campana, L.~Darondeau, and E.~Rousseau}, {\em Orbifold hyperbolicity},
  Compos. Math., 156 (2020), pp.~1664--1698.

\bibitem{CP15}
{\sc F.~Campana and M.~P{\u{a}}un}, {\em Orbifold generic semi-positivity: an
  application to families of canonically polarized manifolds}, in Annales de
  l'Institut Fourier, vol.~65, 2015, pp.~835--861.

\bibitem{CW09}
{\sc F.~Campana and J.~Winkelmann}, {\em A {Brody} theorem for orbifolds},
  Manuscr. Math., 128 (2009), pp.~195--212.

\bibitem{DR20-preprint}
{\sc L.~Darondeau and E.~Rousseau}, {\em Quasi-positive orbifold cotangent
  bundles ; pushing further an example by {J}unjiro {N}oguchi}.
\newblock https://arxiv.org/abs/2006.13515, 2020.

\bibitem{DR20}
{\sc L.~Darondeau and E.~Rousseau}, {\em Quasi-positive orbifold cotangent
  bundles: pushing further an example by {J}unjiro {N}oguchi}, {\'E}pijournal
  de G{\'e}om{\'e}trie Alg{\'e}brique, 8 (2024).

\bibitem{DK93}
{\sc I.~Dolgachev and M.~Kapranov}, {\em Arrangements of hyperplanes and vector
  bundles on $\mathbb{P}^n$}, Duke Mathematical Journal, 71 (1993),
  pp.~633--664.

\bibitem{Do12}
{\sc I.~V. Dolgachev}, {\em Classical algebraic geometry. {A} modern view},
  Cambridge: Cambridge University Press, 2012.

\bibitem{Har92}
{\sc J.~Harris}, {\em Algebraic geometry. {A} first course}, vol.~133 of Grad.
  Texts Math., Berlin etc.: Springer-Verlag, 1992.

\bibitem{Kob98}
{\sc S.~Kobayashi}, {\em Hyperbolic complex spaces}, vol.~318 of Grundlehren
  Math. Wiss., Berlin: Springer, 1998.

\bibitem{Laz04-1}
{\sc R.~Lazarsfeld}, {\em Positivity in algebraic geometry. {I}. {Classical}
  setting: line bundles and linear series}, vol.~48 of Ergeb. Math. Grenzgeb.,
  3. Folge, Berlin: Springer, 2004.

\bibitem{Laz04-2}
\leavevmode\vrule height 2pt depth -1.6pt width 23pt, {\em Positivity in
  algebraic geometry. {II}. {Positivity} for vector bundles, and multiplier
  ideals.}, vol.~49 of Ergeb. Math. Grenzgeb., 3. Folge, Berlin: Springer,
  2004.

\bibitem{Nak00}
{\sc M.~Nakamaye}, {\em Stable base loci of linear series}, Math. Ann., 318
  (2000), pp.~837--847.

\bibitem{Noc83}
{\sc E.~I. Nochka}, {\em On the theory of meromorphic functions}, Sov. Math.,
  Dokl., 27 (1983), pp.~377--381.

\bibitem{Nog77}
{\sc J.~Noguchi}, {\em Meromorphic mappings into a compact complex space},
  Hiroshima Math. J., 7 (1977), pp.~411--425.

\bibitem{No86}
\leavevmode\vrule height 2pt depth -1.6pt width 23pt, {\em Logarithmic jet
  spaces and extensions of de {F}ranchis’ theorem}, in Contributions to
  several complex variables, Springer, 1986, pp.~227--249.

\bibitem{Rou07}
{\sc E.~Rousseau}, {\em Weak analytic hyperbolicity of complements of generic
  surfaces of high degree in projective 3-space}, Osaka J. Math., 44 (2007),
  pp.~955--971.

\bibitem{Rou09}
\leavevmode\vrule height 2pt depth -1.6pt width 23pt, {\em Logarithmic vector
  fields and hyperbolicity}, Nagoya Math. J., 195 (2009), pp.~21--40.

\bibitem{Zai86}
{\sc M.~G. Zajdenberg}, {\em On hyperbolic embedding of complements of divisors
  and the limiting behavior of the {Kobayashi}-{Royden} metric}, Math. USSR,
  Sb., 55 (1986), pp.~55--70.

\end{thebibliography}
\bibliographystyle{siam}

\end{document}